\definecolor{webgreen}{rgb}{0,.5,0}
\definecolor{webbrown}{rgb}{.6,0,0}
\def\Enn{\mathbb{N}}
\def\andd{\, \wedge\, }
\def\orr{\, \vee \, }
\def\TM{Thue--Morse }
\newcommand{\seqnum}[1]{\href{https://oeis.org/#1}{\rm \underline{#1}}}
\DeclareMathOperator{\ce}{\mathsf{cexp}}
\DeclareMathOperator{\per}{\mathsf{per}}
\DeclareMathOperator{\hone}{h0001}
\DeclareMathOperator{\testone}{test0001}
\DeclareMathOperator{\checkone}{check0001}
\DeclareMathOperator{\ndmodk}{ndmodk}
\renewcommand{\exp}{\mathsf{exp}}
\def\Reals{\mathbb{R}}
\def\hatr{\hat{\Reals}}
\def\modd#1 #2{#1\ \mbox{\rm (mod}\ #2\mbox{\rm )}}
\def\overli{\overline}
\def\cA{\mathcal{A}}
\begin{document}

\theoremstyle{plain}
\newtheorem{theorem}{Theorem}
\newtheorem{corollary}[theorem]{Corollary}
\newtheorem{lemma}[theorem]{Lemma}
\newtheorem{proposition}[theorem]{Proposition}
\newtheorem{observation}[theorem]{Observation}

\theoremstyle{definition}
\newtheorem{definition}[theorem]{Definition}
\newtheorem{example}[theorem]{Example}
\newtheorem{conjecture}[theorem]{Conjecture}

\theoremstyle{remark}
\newtheorem{remark}[theorem]{Remark}

\title{Power-free Complementary Binary Morphisms
}

\author{
Jeffrey Shallit\\
School of Computer Science \\
University of Waterloo \\
Waterloo, ON  N2L 3G1 \\
Canada\\
\href{mailto:shallit@uwaterloo.ca}{\tt shallit@uwaterloo.ca} \\
\and 
Arseny Shur \\
Department of Computer Science\\
Bar-Ilan University \\
Ramat Gan 5290002\\
Israel\\
\href{mailto:shur@datalab.cs.biu.ac.il}{\tt shur@datalab.cs.biu.ac.il} \\
\and 
Stefan Zorcic \\
School of Computer Science \\
University of Waterloo \\
Waterloo, ON  N2L 3G1 \\
Canada\\
\href{mailto:szorcic@uwaterloo.ca}{\tt szorcic@uwaterloo.ca}
}

\maketitle

\begin{abstract}
We revisit the topic of power-free morphisms, focusing on the properties of the class of complementary morphisms.
Such morphisms are defined over a $2$-letter alphabet, and map the letters 0 and 1 to complementary words.
We prove that every prefix of the famous \TM word $\mathbf{t}$ gives a complementary morphism that is $3^+$-free and hence $\alpha$-free for every real number $\alpha>3$.
We also describe, using a 4-state binary finite automaton, the lengths of all prefixes of $\mathbf{t}$ that give cubefree complementary morphisms.
Next, we show that $3$-free (cubefree) complementary morphisms of length $k$ exist for all $k\not\in \{3,6\}$. 
Moreover, if $k$ is not of the form $3\cdot2^n$, then the images of letters can be chosen to be factors of $\mathbf{t}$.
Finally, we observe that each cubefree complementary morphism is also $\alpha$-free for some $\alpha<3$; in contrast, no binary morphism that maps each letter to a word of length 3 (resp., a word of length 6) is $\alpha$-free for any $\alpha<3$.

In addition to more traditional techniques of combinatorics on words, we also rely on the Walnut theorem-prover.
Its use and limitations are discussed.
\end{abstract}

\section{Introduction}

\subsection{Basic definitions}

This paper concerns avoidance of certain kinds of repetitions in binary words, and, more specifically, morphisms that preserve  avoidance properties.  We start with some definitions.

Let $w$ be a (finite or infinite) word.  We index words starting at
position $0$, so $w[0]$ is the first letter of $w$.  We indicate a contiguous sub-block within $w$ (also called a {\it factor}) with the notation
$w[i..j] := w[i] \, w[i+1] \cdots w[j]$.
A factor $x$ of $w$ that is neither a prefix nor a suffix of $w$ is called {\it internal}. 
We also say that a factor $w[i..j]$ \emph{occurs at position $i$}.

We consider words over the alphabet $\{0,1\}$.
If $a\in\{0,1\}$, the other letter is called the \emph{complement} of $a$ and denoted by $\overli{a}$.
A word $v[0..n]$ is the \emph{complement} of $w[0..n]$, denoted by $v=\overline{w}$, if $v[i]=\overline{w[i]}$ for $i=0,\ldots,n$.

The length of a finite word $w$ is denoted by $|w|$.
Suppose $w$ is a finite nonempty word of length $n$.
We say that an integer $p$, $1 \leq p \leq n$, is a {\it period\/} of
$w$ if $w[i] = w[i+p]$ for $0 \leq i < n-p$. 
A period is {\it proper\/} if $p<n$.  The smallest period
of $w$ is denoted by $\per(w)$.   The {\it exponent\/} of a word
$w$ is defined to be $\exp(w) = n/\per(w)$, and is a measure of
the repetitivity of the word.  For example, $\exp({\tt entente}) = 7/3$.

If a word has length $2n$ and period $n$, such as
{\tt murmur}, it is called a {\it square}.
If a word has length $2n+1$ and period $n$, such as
{\tt alfalfa}, it is called an {\it overlap}.
If a word has length $3n$ and period $n$, such as
{\tt shshsh}, it is called a {\it cube}.

The {\it critical exponent\/} of a word $w$, denoted $\ce(w)$, is the
supremum, over all nonempty factors $x$ of $w$, of $\exp(x)$.   
For example, the critical exponent of Aristophanes' famous coined word
{\tt brekekekex} is $7/2$.

Let $\alpha>1$ be a real number.   We now define the concept of
$\alpha$-freeness for words:
\begin{itemize}
    \item If $\ce(w) < \alpha$, we say that
$w$ is {\it $\alpha$-free}. 
  \item If $\ce(w) \leq \alpha$, we say that $w$
is {\it $\alpha^+$-free}.  
\end{itemize}
For example, the name of the Dutch mathematician {\tt Lekkerkerker} 
is $3^+$-free, but not $3$-free.
The terms \emph{squarefree}, \emph{overlap-free}, and \emph{cubefree} are commonly used substitutes for $2$-free, $2^+$-free, and $3$-free, respectively.  

It is convenient to extend the real numbers in such a way that every real number $\alpha$
has a partner, $\alpha^+$ that satisfies the inequality $\alpha < \alpha^+ < \beta$ for all $\alpha < \beta$.  This new domain is denoted $\hatr$.  
Somewhat unintuitively, we define $\lceil \alpha^+ \rceil = \lceil \alpha \rceil$ for
$\alpha \in \Reals$.

\subsection{Power-free morphisms}

Recall that a \emph{morphism} $h$ is a map obeying the identity
$h(xy) = h(x) h(y)$ for all words $x, y$. 
We write $h^n$ for the composition $h\circ h\circ\cdots\circ h$  ($n$ times); $h^0$ is the identity map.
An infinite word $\mathbf{u}$ satisfying $h(\mathbf{u})=\mathbf{u}$ is the \emph{fixed point} of $h$.
If a morphism maps each letter to a word of the same length, say length $k$, we call it \emph{($k$-)uniform}; we refer to $k$ as the \emph{length} of such a morphism.
A uniform binary morphism $h$ satisfying $h(1)=\overline{h(0)}$ is called \emph{complementary}.

Let $\alpha \in \hatr$ and $\alpha>1$.   There is also a concept of
$\alpha$-freeness for morphisms, as follows:
 if for all words $w$ we have \\
 \centerline{
 $w$ $\alpha$-free implies that
    $h(w)$ is $\alpha$-free,}\\[.1in]
    then we call $h$ $\alpha$-free.
In other words, an $\alpha$-free morphism is one that preserves the property of being $\alpha$-free.
The fixed point of any $\alpha$-free morphism is $\alpha$-free; however, the converse implication is not always true.

The first result in the theory of $\alpha$-free morphisms was proven by Thue in 1912 \cite{Thue:1912}:
\begin{theorem}
    Define $\mu: 0\mapsto 01, 1\mapsto 10$.  Then $\mu$ is a $2^+$-free morphism.
\end{theorem}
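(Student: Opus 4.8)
The plan is to establish the contrapositive: if $w$ is $2^+$-free, then $\mu(w)$ is $2^+$-free. Equivalently, I will show that every overlap occurring in $\mu(w)$ forces an overlap in $w$. The engine of the argument is a \emph{synchronization} property of $\mu$. Writing $y := \mu(w)$, note that $y[2i..2i+1] = \mu(w[i]) \in \{01,10\}$, so $y$ contains neither $000$ nor $111$, and whenever $y[m] = y[m+1]$ the index $m$ must be odd. More precisely, $y[2m] = w[m]$ and $y[2m+1] = \overline{w[m]}$; consequently any factor of $y$ that starts at an even position and has even length equals $\mu(g)$ for a unique factor $g$ of $w$ of half the length, and from such a block one can read off $g$ letter by letter (since $\mu(a) = a\overline{a}$).

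Now assume $y$ contains an overlap, hence a factor $z = cucuc$ with $c$ a letter, occurring at some position $j$; put $p := |cu| = \per(z)$, so $|z| = 2p+1$. If $p = 1$ then $y$ contains $000$ or $111$, contradicting the synchronization property, so $p \ge 2$. If $p$ is even, then one of $j$, $j+1$ is even; applying the synchronization property to the length-$2p$ square $cucu$ (if $j$ is even) or $ucuc$ (if $j$ is odd), which then sits at an even position, I recover a square $g_1 g_1$ inside $w$ (applying $\mu^{-1}$ to the aligned block), and reading one additional letter of $y$ on the appropriate side of that square — using that this neighbouring letter, being at a known parity, is determined by $w$ — produces an overlap of period $p/2$ in $w$. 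This case is routine once the alignment is set up.

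The delicate case is $p$ odd (so $p \ge 3$). Here the period condition $z[i] = z[i+p]$ relates two positions $j+i$ and $j+i+p$ of \emph{opposite} parity, so one of them reads a letter of $w$ and the other a letter of $\overline{w}$. Unwinding $y[2m]=w[m]$ and $y[2m+1]=\overline{w[m]}$ for $i$ even and for $i$ odd separately yields, over a common range of $(p+1)/2 \ge 2$ consecutive indices $s$, both $w[s + (p-1)/2] = \overline{w[s]}$ and $w[s + (p+1)/2] = \overline{w[s]}$. Combining these gives $w[t] = w[t+1]$ for $(p+1)/2 \ge 2$ consecutive values of $t$, i.e. a run of at least three equal letters in $w$ — in particular $000$ or $111$, which is an overlap. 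The main obstacle is precisely this bookkeeping: one must check that the two families of relations genuinely overlap in a range of length at least $2$, which is where $p \ge 3$ (forced by $p$ odd and $p \ne 1$) is essential. In all cases $w$ contains an overlap, completing the contrapositive and hence the theorem.
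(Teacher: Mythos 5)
The paper offers no proof of this statement --- it is Thue's 1912 theorem and is only cited --- so there is nothing in the text to compare against; your argument is the classical synchronization/parity proof, and in structure it is correct. The observations that $y=\mu(w)$ contains neither $000$ nor $111$, that $y[m]=y[m+1]$ forces $m$ odd, and that an even-length factor at an even position pulls back under $\mu^{-1}$ are exactly the right tools, and both the even-$p$ and odd-$p$ cases do close.

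One step is stated slightly too strongly, precisely at the bookkeeping you flag as delicate. In the odd-$p$ case, the relations $y[j+i]=y[j+i+p]$ with $j+i$ even give $w[m+(p-1)/2]=\overline{w[m]}$ for $m$ ranging over one interval of $(p+1)/2$ consecutive integers, while those with $j+i$ odd give $w[m+(p+1)/2]=\overline{w[m]}$ for $m$ in another such interval. These two intervals coincide when $j$ is even, but when $j$ is odd they are offset by one, so their literal intersection has only $(p-1)/2$ elements --- a single element when $p=3$, which yields only one equality $w[t]=w[t+1]$ and hence a square, not yet an overlap. The repair is immediate: rather than combining the two relations at the same index $s$, combine the first at $s+1$ with the second at $s$ (using $s+1+(p-1)/2=s+(p+1)/2$) to get $w[s]=w[s+1]$ for all $(p+1)/2\ge 2$ values of $s$ in the second interval, which produces the run of three equal letters and the desired overlap in $w$. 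With that off-by-one patched, and with the routine even-$p$ case written out (the extra letter of the overlap does extend the recovered square of $w$ to period $p/2$ and length $p+1$ on the correct side), the proof is complete.
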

In fact, even more is true.  We have
the following result
\cite{Shur:2000,Karhumaki&Shallit:2004}:

\begin{theorem} \label{t:muexponents}
Let $\alpha \in \hat{\Reals}$ and $\alpha > 2$.  Then $\mu$ is an $\alpha$-free morphism.
\end{theorem}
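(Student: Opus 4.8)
The statement to prove is Theorem~\ref{t:muexponents}: for every $\alpha \in \hatr$ with $\alpha > 2$, the Thue--Morse morphism $\mu\colon 0\mapsto 01,\ 1\mapsto 10$ is $\alpha$-free. My plan is to leverage the classical fact (the previous theorem, due to Thue) that $\mu$ is $2^+$-free, together with a quantitative analysis of how $\mu$ transforms the critical exponent. The key point is that $\mu$ is a $2$-uniform morphism and its images $01$ and $10$ are ``spread out'': if $h(w)$ contains a long repetition, then a corresponding repetition must already be present in $w$, but with a controlled loss in exponent. So the heart of the argument is a lemma of the form: for any word $w$, $\ce(\mu(w)) \le \max(C, f(\ce(w)))$ for some small constant $C$ and some function $f$ with $f(\alpha) < \alpha$ when $\alpha > 2$ and $f(\alpha)\to$ something $\le 2$; equivalently, $\mu$ can only increase small exponents and cannot push anything above the level of its own fixed point's behaviour.

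First I would set up the synchronization machinery. Because $\mu$ is $2$-uniform, every factor $u$ of $\mu(w)$ of length $\ge 2$ can be written as $u = s\,\mu(v)\,t$ where $v$ is a factor of $w$, $s$ is a proper suffix of an image ($\varepsilon$ or a single letter), and $t$ is a proper prefix of an image. Suppose $u$ has exponent $\alpha = |u|/\per(u)$ with $\alpha > 2$, and let $p = \per(u)$. The first substep is to argue that $p$ must be even: this is where the specific structure of $\mu$ (that $01$ and $10$ are the only images, and no factor of $\mathbf{t}$ has exponent $\ge 2^+$ other than squares) is used — an odd period of a sufficiently long repetition would force a forbidden short repetition, contradicting $2^+$-freeness of $\mu$ applied to short witnesses, or can be ruled out by a finite case check on periods $p \le$ some bound. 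Once $p = 2q$ is even and $u$ is long enough, the occurrence of $u$ and its shift by $p$ inside $\mu(w)$ are ``in phase'' with the block structure, so they descend to an occurrence in $w$ of a word $u'$ with period $q$ and $|u'| \ge |u|/2 - O(1)$, giving $\exp(u') \ge \alpha - O(1)/q$.

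Next I would combine this descent with $2^+$-freeness. If $w$ is $\alpha$-free with $\alpha > 2$, then in particular $w$ is $2^+$-free... no — $w$ need only be $\alpha$-free, so $w$ may contain squares. The correct route: assume for contradiction $h(w) = \mu(w)$ is not $\alpha$-free, so it has a factor $u$ with $\exp(u)\ge \alpha > 2$. Take $u$ with $\per(u)=p$ minimal, hence $u$ is ``primitive-ish''. For small $p$ (say $p \le 4$) one checks directly — by finite inspection of all factors of $\mu$-images of short words, or by noting these would be factors of $\mathbf{t}$-like words — that no such $u$ with exponent $> 2$ occurs, using that $\mathbf{t}$ and all $\mu(w)$ avoid $ususu$-type patterns. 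For large $p$, apply the descent step to get a factor $u'$ of $w$ with $\per(u') = p/2$ and $\exp(u') \ge \alpha - O(1/p)$; if we choose $|u|$ (hence $p$) large, this is still $> 2$, in fact still $\ge \alpha$ after a careful accounting... here one must be slightly careful: the descent loses a constant additive amount in length, hence $O(1/p)$ in exponent, so one should instead iterate — if $\exp(u') \ge 2 + \varepsilon$ we can again apply $\mu$-descent or directly invoke that $w$ being $\alpha$-free with the minimality of $p$ gives a contradiction. Cleanly: show $\ce(\mu(w)) < 2^+$ fails only via squares coming from squares, and an exponent-$\alpha$ power in $\mu(w)$ with $\alpha>2$ forces an exponent-$\alpha$ power in $w$ (the additive loss is absorbed because periods are halved, so one compares $|u|/p$ with $(|u|/2 - c)/(p/2) = |u|/p - 2c/p$, and then re-examines: the genuinely clean statement is that the fractional excess over an integer is preserved well enough).

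\textbf{Main obstacle.} The delicate part is the bookkeeping in the descent step: controlling the additive constants $s,t$ at the ends of $u = s\,\mu(v)\,t$ and showing the period genuinely halves without the exponent dropping to $\le 2$. The cleanest way around this, which I would adopt, is to prove the sharper structural lemma that \emph{every} factor $u$ of $\mu(w)$ with $\exp(u) > 2$ and $|u|$ large has even period $p$, and that such $u$ extends/restricts to a block-aligned factor $\mu(u')$ with $\exp(u') = \exp(u)$ exactly (no loss), by showing the ``ragged ends'' $s,t$ can always be absorbed into the periodic structure when the exponent strictly exceeds $2$. Then $\alpha$-freeness of $w$ immediately yields $\exp(u) = \exp(u') < \alpha$, contradiction. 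Establishing this no-loss alignment — essentially that $\mu$ reflects $>2$-powers exactly — is the technical crux, and it relies on the fact that $\mu$ is a \emph{code} with bounded synchronization delay and that its images are not themselves repetitive; the finitely many small-period cases ($p \le 4$, say) are dispatched by a direct search, which is where one implicitly uses Thue's $2^+$-freeness result quoted above.
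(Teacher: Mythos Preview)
The paper does not supply its own proof of Theorem~\ref{t:muexponents}; the result is quoted from \cite{Shur:2000,Karhumaki&Shallit:2004}. Your plan is essentially the argument found in those references: show that any factor $u$ of $\mu(w)$ with $\exp(u)>2$ has even period $p=2q$, and then descend to a factor of $w$ with period $q$ and exponent at least $\exp(u)$.

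Two places in your write-up need tightening. First, the reason the period must be even has nothing to do with overlap-freeness of $\mathbf{t}$ or with Thue's theorem applied to ``short witnesses'' (recall that $w$, and hence $\mu(w)$, need not be overlap-free here). It follows directly from the block structure: since $\mu(w)[2k]\ne\mu(w)[2k+1]$ for every $k$, each occurrence of $aa$ in $\mu(w)$ starts at an odd position, so a period carrying one occurrence of $aa$ to another must be even; and if $u$ contains no $aa$ then $u$ is alternating with period $2$. This works uniformly for all $p$. Second, there is no loss in the descent and hence no finite case check is required: among the even-indexed and the odd-indexed positions of $\mu(w)$ covered by $u$, one of the two sets has size $\lceil |u|/2\rceil$, and the letters at those positions form a factor of $w$ (respectively of $\overline{w}$) with period $q$, hence with exponent at least $\lceil |u|/2\rceil/q \ge |u|/p=\exp(u)$. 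In particular, your intermediate claim that ``for small $p$ one checks directly that no such $u$ with exponent $>2$ occurs'' is false (e.g.\ $\mu(000)=010101$ has period $2$ and exponent $3$) and, fortunately, also unnecessary.
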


The study of $\alpha$-free morphisms has been quite active.
Most of the results are related to integer values of $\alpha$ and the ``nearly integer'' value $2^+$.
Thue \cite{Thue:1912} gave the first easily verifiable sufficient condition for a morphism $h$ to be squarefree. The best criterion known for squarefree morphisms is due to Crochemore \cite{Crochemore:1982}.
A similar criterion was found for overlap-free morphisms \cite{Richomme&Wlazinski:2004}.

The case of $\alpha$-free morphisms is more subtle when $\alpha$ is an integer greater than 2. There is a generalization, due to Bean et al.\  \cite{Bean&Ehrenfeucht&McNulty:1979}, of Thue's sufficient condition, but on the other hand there is evidence that no general criteria exist even for $\alpha=3$ \cite{Richomme&Wlazinski:2002a}.
However, such criteria {\it do\/} exist under certain restrictions on morphisms---namely, for \emph{uniform} morphisms \cite{Keranen:1984, Richomme&Wlazinski:2007} and for \emph{binary} morphisms \cite{Wlazinski:2001, Richomme&Wlazinski:2002a}.

The sets of $\alpha$-free morphisms for different integers $\alpha$ satisfy some containment relations.
Thus, every squarefree morphism satisfying a certain additional condition is $\alpha$-free for all integers $\alpha>2$ \cite{Bean&Ehrenfeucht&McNulty:1979}; an even simpler condition was presented in \cite{Leconte:1985}.
Further, every $\alpha$-free \emph{uniform} morphism is $(\alpha+1)$-free whenever $\alpha\ge3$ is an integer \cite{Wlazinski:2016, Wlazinski:2017}. 
On the other hand, for every integer $\alpha\ge3$ there exists an $(\alpha+1)$-free \emph{binary} morphism that is not $\alpha$-free but has an $\alpha$-free fixed point \cite{Richomme&Seebold:2002b}.

The case of non-integer $\alpha$ is less studied. Brandenburg \cite{Brandenburg:1983} analyzed whether a morphism over $k$ letters can be $\alpha_k$-free, where $\alpha_k$ is the minimum critical exponent of an infinite $k$-ary word.
He proved that $\alpha_k$-free morphisms do not exist if $k\ge 4$; for $k=2$ and $k=3$ such morphisms do exist \cite{Thue:1912,Dejean:1972}.
Note that for non-integer $\alpha$, the uniformity condition on morphisms is not restrictive, due to the following observation:
\begin{observation} \label{o:necessary}[\cite{Brandenburg:1983,Kobayashi:1986}]
    If a morphism $h$ is $\alpha$-free for a non-integer $\alpha\in\hatr$, then $h$ is uniform, and for every pair of distinct letters $a,b$ from the domain of $h$, the words $h(a)$ and $h(b)$ have distinct first letters and distinct last letters.
\end{observation}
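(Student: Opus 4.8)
We may assume the domain $\Sigma$ of $h$ has at least two letters. Write $m=\lfloor\alpha\rfloor\ge 1$ and $\delta=\alpha-m$, so $0<\delta<1$; the non-integrality of $\alpha$ is used precisely through $\delta>0$.

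First I would establish two easy facts. \emph{Each $h(a)$ is primitive:} since $\ce(a^m)=m<\alpha$, the word $a^m$ is $\alpha$-free, hence so is $h(a^m)=h(a)^m$; but if $h(a)=z^k$ with $k\ge 2$ then $\ce(h(a)^m)\ge km\ge 2m>m+\delta=\alpha$, a contradiction. (The same computation applied to $(ab)^m$ shows $h$ is injective on letters, disposing of the degenerate case where $h$ is $1$-uniform.) \emph{$h$ is non-erasing:} I would prove this when $\alpha>2$, the regime the present paper needs --- for $1<\alpha<2$ it is a standing hypothesis. If $h(a)=\varepsilon$ while $h(b)\ne\varepsilon$, identify $a,b$ with the two letters of the Thue--Morse word and let $w$ be a length-$N$ prefix of $\mathbf{t}$; since $\ce(\mathbf{t})=2<\alpha$ the word $w$ is $\alpha$-free, yet $h(w)=h(b)^k$ with $k$ the number of occurrences of $b$ in $w$, so $\ce(h(w))\to\infty$ as $N\to\infty$, contradicting $\alpha$-freeness of $h$.

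For the remaining assertions I would use a single amplification device. Assuming a violation, I build an $\alpha$-free word containing a factor $\rho^m\rho'$, where $\rho$ is primitive and $\rho'$ is a proper prefix of $\rho$, so that its image $h(\rho^m\rho')=h(\rho)^m h(\rho')$ is a prefix-power of period $|h(\rho)|$ and exponent $m+|h(\rho')|/|h(\rho)|$. Keeping the source $\alpha$-free forces its exponent $m+|\rho'|/|\rho|$ below $\alpha$, that is $|\rho'|/|\rho|<\delta$, whereas the violation makes the \emph{image} ratio $|h(\rho')|/|h(\rho)|$ strictly larger --- large enough to reach $\delta$ --- so that $h(\rho^m\rho')$ contains a factor of exponent $\ge\alpha$. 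For \emph{uniformity} the fuel is a length mismatch $|h(a)|\ne|h(b)|$: taking $\rho$ to begin with a longest-image letter and to consist otherwise of shortest-image letters makes $|h(\rho')|/|h(\rho)|>|\rho'|/|\rho|$, and the slack always exists (it reduces to $M/(M-1)>1$ where $M=\max_c|h(c)|>\min_c|h(c)|$). For \emph{distinct first letters}: if $h(a)$ and $h(b)$ begin with the same letter then a common prefix of positive length $g$ of $h(b)$ and $h(a)^\infty$ already makes $h(a^m b)=h(a)^m h(b)$ contain the period-$|h(a)|$ factor $h(a)^m\,(h(b)[0..g-1])$, of exponent $m+g/|h(a)|$; driving $g$ down to $0$ with slightly longer witnesses of the $\rho^m\rho'$ type gives the claim. \emph{Distinct last letters} then follows by applying this to the reversed morphism $h^R$, $h^R(c)=(h(c))^R$, which is $\alpha$-free if and only if $h$ is (the critical exponent is reversal-invariant) and whose first letters are the last letters of $h$.

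The hard part is realizing the witness $\rho^m\rho'$ inside a \emph{genuinely} $\alpha$-free word: no other factor, and in particular no letter-run, may attain exponent $\alpha$. When $\delta$ is bounded away from $0$ this is routine, but when $\delta$ is small the inequality $|\rho'|/|\rho|<\delta$ forces $\rho$ to be long, and a long word over few letters tends to carry a long periodic sub-factor of its own. I would handle this by assembling $\rho$ from a primitive, unbordered word of critical exponent $\le 2$ with all letter-runs kept strictly below $\alpha$ --- for instance from factors of $\mathbf{t}$, which exist of every sufficiently large length because $\alpha>2$ guarantees arbitrarily long $\alpha$-free words over $\Sigma$ --- while still prescribing the first letter and the image-heavy/light composition. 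Checking that all the exponent inequalities can be met simultaneously is the technical heart of the proof; the rest is bookkeeping, since non-emptiness of the images guarantees that applying $h$ can only lengthen, never destroy, the manufactured repetition.
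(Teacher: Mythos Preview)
The paper does not prove this observation at all: it is quoted as a known fact from Brandenburg (1983) and Kobayashi (1986), with no argument supplied. So there is no in-paper proof to compare your sketch against, and your write-up has to stand on its own.

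Your overall strategy---manufacture an $\alpha$-free word $\rho^m\rho'$ whose exponent sits just below $\alpha$, then use non-uniformity or a shared first letter to push the exponent of $h(\rho^m\rho')$ past $\alpha$---is the right one, and your primitivity and non-erasing paragraphs are fine (for the $\alpha>2$ regime the paper works in). But the part you yourself flag as ``the technical heart'' is not actually carried out, and two of the concrete steps you do write down have problems. First, your initial choice $\rho=a\,b^{n-1}$ puts a run $b^{n-1}$ inside $\rho$, which for small $\delta$ already has exponent $\ge\alpha$; you then switch to Thue--Morse factors, but those are essentially letter-balanced, so $|h(\rho')|/|h(\rho)|$ and $|\rho'|/|\rho|$ become nearly equal and the needed strict gap $|h(\rho')|/|h(\rho)|\ge\delta>|\rho'|/|\rho|$ is no longer automatic. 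One must simultaneously engineer (i) $\rho$ primitive and unbordered, (ii) $\rho^m\rho'$ genuinely $\alpha$-free (note that even for an overlap-free $\rho$, the concatenation $\rho\rho$ can contain cubes, e.g.\ $\rho=0110100$), and (iii) a density imbalance between $\rho'$ and $\rho$ large enough to beat $\delta$; you assert this can be done but do not do it. Second, in the first-letter paragraph the phrase ``driving $g$ down to $0$'' is backwards: smaller $g$ makes the image exponent $m+g/|h(a)|$ \emph{smaller}, so when $g/|h(a)|<\delta$ the witness $a^m b$ fails and you are thrown back onto the unfinished $\rho^m\rho'$ machinery.

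In short, the outline is sound but the sketch does not close; for a complete argument you should consult the cited Brandenburg and Kobayashi papers, which handle the witness construction with the required care.
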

Kobayashi \cite{Kobayashi:1986} proved the following sufficient condition for morphisms to be $\alpha$-free. 

\begin{theorem} \label{t:kobayashi}
Let $\alpha > 2$ be a member of $\hatr$. Suppose that a $k$-uniform morphism $h$ satisfies the following conditions:
\begin{enumerate}
    \item for every letter $a$ we have $\exp(h(a))<2$;
    \item for every two distinct letters $a,b$, \begin{itemize} 
    \item[(b1)] the words $h(a)$ and $h(b)$ have distinct first letters;
    \item[(b2)] the words $h(a)$ and $h(b)$ have distinct last letters;
    \item[(b3)] if $h(a)=uv$, $h(b)=v'u$, and $|u|\ge k/2$, then $v=v'$;
    \end{itemize}
    \item the word $h(x)$ is $\alpha$-free for each $\alpha$-free word $x$ of length $\leq \lceil \alpha \rceil + 1 $.
\end{enumerate}
Then $h$ is $\alpha$-free.
\end{theorem}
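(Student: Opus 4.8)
The plan is to argue by contraposition: assume $h$ is not $\alpha$-free, so there is an $\alpha$-free word $w$ for which $h(w)$ is not $\alpha$-free, and then manufacture a forbidden repetition inside $w$ itself. Write $h(w)$ as the concatenation of the length-$k$ blocks $B_t=h(w[t])$. Since $h(w)$ is not $\alpha$-free it has a factor $Z$ with $\exp(Z)\ge\alpha$ in the sense of $\hatr$ (that is, $\exp(Z)>\beta$ when $\alpha=\beta^+$); pick $Z$ to be a \emph{maximal} such repetition and set $p=\per(Z)$, $n_0=|Z|$, so $n_0/p=\exp(Z)>2$. Let $B_s,\dots,B_f$ be exactly the blocks that $Z$ meets and put $m_0=f-s+1$. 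If $m_0\le\lceil\alpha\rceil+1$ then $Z$ is a factor of $h(w[s..f])$, the word $w[s..f]$ is $\alpha$-free and has length at most $\lceil\alpha\rceil+1$, so hypothesis~(3) makes $h(w[s..f])$ $\alpha$-free --- contradicting $\exp(Z)\ge\alpha$. Hence $m_0\ge\lceil\alpha\rceil+2\ge5$, and in particular $Z$ is long compared to $k$ (for instance $n_0\ge3k$, since $Z$ covers at least three blocks).

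The heart of the matter is a \emph{synchronization lemma}: hypotheses (1) and (2) should force $h$ to be synchronizing, i.e.\ there is a constant $C$ depending only on $k$ such that any factor $W$ of any word $h(u)$ with $|W|\ge C$ ``remembers'' its position modulo $k$ --- two occurrences of $W$ start at positions that are congruent mod $k$. This is exactly where conditions~(1) and~(b3) are used (together with (b1),(b2)): (1) prevents any $h(a)$ from having a period at most $k/2$, and (b3) kills the only remaining kind of ``conjugate overlap'' between the block images, so a finite case analysis of how two block-decompositions of $W$ can disagree shows they cannot. I would then apply this inside $Z$: the window $Z[0..\,n_0-p-1]$ recurs at distance $p$, and since $n_0-p$ is at least $C$ (using $n_0\ge 3k$ and $p\le n_0/\alpha$) we get $p\equiv0\pmod k$; write $p=qk$.

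What remains is a routine pull-back. From $p=qk$, the relation $Z[i]=Z[i+p]$ matches offset $c$ of $B_t$ with offset $c$ of $B_{t+q}$ whenever both blocks lie entirely inside $Z$; thus $B_t=B_{t+q}$ for such pairs, hence $w[t]=w[t+q]$ because $h$ is injective on letters by~(b1). For the at most two partially-covered boundary blocks I use the first/last-letter conditions: the covered part of $B_s$ is a nonempty suffix of $h(w[s])$ which, by periodicity, coincides with the corresponding suffix of $h(w[s+q])$, so these two images share a last letter and $w[s]=w[s+q]$ by~(b2); symmetrically $w[f]=w[f-q]$ by~(b1) via a common prefix. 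Assembling the covered-block relations with these two boundary relations gives $w[t]=w[t+q]$ for every $t\in[s,f-q]$, so $q$ is a period of $w[s..f]$. Since $Z$ lies within $m_0$ blocks, $n_0\le m_0k$, and therefore
\[
\exp(w[s..f])\ \ge\ \frac{m_0}{q}\ =\ \frac{m_0k}{qk}\ \ge\ \frac{n_0}{p}\ =\ \exp(Z)\ \ge\ \alpha ,
\]
contradicting the $\alpha$-freeness of $w$.

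The one genuinely hard step is the synchronization lemma: everything afterwards is bookkeeping with periods plus the elementary one-letter consequences of (b1) and (b2), whereas establishing bounded synchronization delay requires a careful (but finite) analysis of the overlaps among $h(a)$, $h(b)$, and $h(a)h(b)$ --- precisely the behaviour that conditions~(1) and~(2) are engineered to prevent. One should also double-check the bound $n_0-p\ge C$ when $\alpha$ is very close to $2$ (the case $\alpha=2^+$), where $Z$ is shortest; that regime is essentially the classical overlap-free theory and can be handled with a small supplementary argument that again uses~(1) to constrain $p$.
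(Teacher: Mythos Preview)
The paper does not prove this theorem at all: it is quoted from Kobayashi's 1986 paper as a known criterion and then used as a black box (in the proofs of Theorems~\ref{t:comp3free} and~\ref{t:below3}, and throughout Section~\ref{s:walnut}). There is therefore no proof in the paper to compare your attempt against.

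On the attempt itself: the contrapositive strategy and the pull-back bookkeeping are the standard shape of such arguments, and your final inequality $m_0/q\ge n_0/p$ is correct once $p=qk$ is known. But the synchronization lemma \emph{is} the theorem: everything else is routine, as you yourself note. You assert that conditions~(a) and~(b3) yield bounded synchronization delay via ``a finite case analysis,'' yet you give no such analysis, no value of $C$, and no verification that $n_0-p\ge C$ actually holds. Two places where this bites: first, the claim $m_0\ge\lceil\alpha\rceil+2\ge 5$ is false at $\alpha=2^+$, since the paper defines $\lceil 2^+\rceil=2$, so $m_0$ may be only $4$; second, with $\alpha$ close to $2$ one has $n_0-p$ only slightly larger than $n_0/2$, and with $m_0=4$ this can be barely above $k$, so whatever $C$ you end up with must be at most $k$ or you need a separate argument in that regime. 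Your closing paragraph already concedes both of these loose ends. As it stands the proposal is a reasonable outline of Kobayashi's argument, not a proof.
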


The only case in which the full description of the set of $\alpha$-free morphisms is known is presented in the following theorem. The particular case of overlap-free morphisms was explicitly stated in \cite{Berstel&Seebold:1993}, and the general case easily follows from Theorem~\ref{t:muexponents} and known structural results on $(7/3)$-free words; see, e.g., \cite{Karhumaki&Shallit:2004,Shur:2000}.

\begin{theorem}\label{t:73free}
    Let $\alpha\in\hatr$ and $2^+\le\alpha\le 7/3$.
    A binary morphism is $\alpha$-free if and only if it equals $\mu^n$ or $\theta\circ\mu^n$, where $\theta: 0\mapsto 1, 1\mapsto 0$, $\mu: 0\mapsto 01, 1\mapsto 10$, and $n\ge0$.
\end{theorem}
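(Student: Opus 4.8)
\emph{The ``if'' direction.} This part is immediate from Theorem~\ref{t:muexponents}. Indeed, $\mu$ is $\alpha$-free for every $\alpha\in\hatr$ with $\alpha>2$, in particular for all $\alpha$ with $2^+\le\alpha\le 7/3$. A composition of $\alpha$-free morphisms is again $\alpha$-free ($w$ $\alpha$-free $\Rightarrow$ $\mu(w)$ $\alpha$-free $\Rightarrow$ $\mu^n(w)$ $\alpha$-free, by induction on $n$), so $\mu^n$ is $\alpha$-free. Finally $\theta$ merely renames the two letters, so $\theta(w)=\overline{w}$ has the same set of factor exponents as $w$ and $\ce(\theta(w))=\ce(w)$; thus $\theta$ is $\alpha$-free for every $\alpha$, and $\theta\circ\mu^n$ is $\alpha$-free as well. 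This gives one inclusion.

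\emph{The ``only if'' direction: reduction to overlap-free morphisms.} Fix $\alpha\in\hatr$ with $2^+\le\alpha\le 7/3$ and let $h$ be an $\alpha$-free binary morphism. The plan is to show that $h$ is in fact an overlap-free ($2^+$-free) morphism, and then invoke the classification of overlap-free binary morphisms of Berstel and S\'e\'ebold~\cite{Berstel&Seebold:1993}: those are exactly the morphisms $\mu^n$ and $\theta\circ\mu^n$, which is the desired conclusion. (Note the statement is symmetric under $h\mapsto\theta\circ h$, so we may compose with $\theta$ freely.) Since $\alpha$ is not an integer, Observation~\ref{o:necessary} already tells us that $h$ is $k$-uniform for some $k$ and that $h(0),h(1)$ have distinct first letters and distinct last letters; after composing with $\theta$ if necessary, assume $h(0)$ starts with $0$ and $h(1)$ starts with $1$. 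So the task reduces to: for every overlap-free binary word $w$, the word $h(w)$ is overlap-free.

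\emph{The main step.} If $w$ is overlap-free then $\ce(w)\le 2<\alpha$, so $w$ is $\alpha$-free, hence $h(w)$ is $\alpha$-free and $\ce(h(w))<\alpha\le 7/3$; that is, $h(w)$ is $(7/3)$-free. Suppose toward a contradiction that $h(w)$ is not overlap-free for some overlap-free $w$, and pick such a $w$ of minimal length; then some maximal repetition $R$ in $h(w)$ has period $p$ and exponent in the open interval $(2,7/3)$. I would now run the standard synchronization analysis for a $k$-uniform binary morphism whose images of $0$ and $1$ differ at both ends: such an $R$ is either (i) ``internal'', contained in $h(x)$ for a factor $x$ of $w$ of length at most two, or (ii) long enough (when $p$ is large relative to $k$) that it descends to a repetition of comparable exponent inside a proper factor of $w$. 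Case (ii) contradicts the minimality of $w$. In case (i), $x$ is one of the overlap-free words of length $\le 2$, so $h(x)$ is $(7/3)$-free by hypothesis; combining this with the $(7/3)$-freeness of the images of a few further short overlap-free words (such as $010$, $101$, $0010$, $0100$, $\dots$, which arise next to the factors $00$, $11$ inside overlap-free words) and with the fine structure of $(7/3)$-free and overlap-free binary words --- Restivo--Salemi-type factorizations and the refinements used in \cite{Karhumaki&Shallit:2004, Shur:2000} --- forbids an exponent exceeding $2$. Hence no such $w$ exists, $h$ is an overlap-free morphism, and the Berstel--S\'e\'ebold theorem completes the argument.

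\emph{Where the difficulty lies.} The substantive point is the main step: showing that requiring only $(7/3)$-freeness of \emph{every} image is already as strong as requiring overlap-freeness of the morphism. A single binary word can perfectly well have critical exponent strictly between $2$ and $7/3$ --- for instance $011001100$ has critical exponent $9/4$ --- so the contradiction cannot be extracted from one image in isolation; it has to come from the interaction of $h$ with the whole family of overlap-free inputs at once. Packaging that interaction is exactly what the synchronization argument together with the structure theory of $(7/3)$-free binary words is for, and getting the bookkeeping of the short boundary words right is the delicate part of the proof.
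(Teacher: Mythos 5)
Your approach matches the paper's own treatment: the paper does not actually prove this theorem, but presents it as a known result, attributing the overlap-free ($2^+$) case to Berstel--S\'e\'ebold and stating that the general case $2^+\le\alpha\le 7/3$ ``easily follows'' from Theorem~\ref{t:muexponents} together with the structural results on $(7/3)$-free binary words of Karhum\"aki--Shallit and Shur --- which is exactly the reduction you outline (``if'' via Theorem~\ref{t:muexponents}, ``only if'' by showing the morphism must already be overlap-free and invoking the Berstel--S\'e\'ebold classification). Your ``if'' direction is complete, and your ``only if'' direction defers the same hard synchronization-plus-structure step to the same references the paper does, so the proposal is in line with the paper's level of detail for this statement.
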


In particular, Theorem~\ref{t:73free} implies that for $2^+\le\alpha\le 7/3$ a $k$-uniform $\alpha$-free binary morphism exists if and only if $k$ is a power of 2.
However, for $\alpha=3$ the situation is quite different. Currie and Rampersad \cite{Currie&Rampersad:2009}
exhibit a set of morphisms proving the following  result.
\begin{theorem} \label{t:3free_lengths}
    For every $k \geq 1$ there is a $k$-uniform cubefree binary morphism.
\end{theorem}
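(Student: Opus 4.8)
The plan is to handle a bounded initial segment of lengths $k$ by explicit morphisms, to cover all large $k$ by a single parametrized family, and to certify cubefreeness throughout by a finite check; this is, in spirit, the route of Currie and Rampersad \cite{Currie&Rampersad:2009}. For the certification I would lean on Theorem~\ref{t:kobayashi} with $\alpha = 3$: for a $k$-uniform binary morphism it reduces cubefreeness to the structural conditions (1) and (2) on the pair $h(0), h(1)$ together with the requirement that $h(x)$ be cubefree for each of the finitely many cubefree binary words $x$ with $|x| \le \lceil 3 \rceil + 1 = 4$, which is a finite mechanical test. Since this criterion is only sufficient and its hypotheses (notably that $h(0)$ and $h(1)$ end in distinct letters) can fail for small $k$, I would, where necessary, fall back on the fact that cubefreeness of a uniform morphism is in any case decidable through a finite test set.

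For small lengths one exhibits explicit morphisms and runs them through the test. Here $k = 1$ is the identity, which preserves every avoidance property; $k = 2$ is $\mu$, cubefree by Theorem~\ref{t:muexponents}, whence also $k = 4, 8, \ldots$ via $\mu^2, \mu^3, \ldots$; and for each remaining small $k$ one searches directly for a suitable pair of length-$k$ words. For instance, for $k = 3$ the (non-complementary) morphism $0 \mapsto 001$, $1 \mapsto 101$ works: writing $h(w)$ by residue class modulo $3$, the positions $\equiv 1$ and $\equiv 2$ carry the constants $0$ and $1$ while the positions $\equiv 0$ carry $w$, so a short case analysis shows that an unsynchronized cube would force $0 = 1$ and a synchronized cube would force a cube in $w$; hence $h(w)$ is cubefree whenever $w$ is. Such constructions dispose of the finitely many exceptional lengths.

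For all large $k$ I would produce a parametrized family $\{h_k\}_{k \ge N}$ given by an explicit formula valid for every $k$. A natural template is to take $h_k(0)$ and $h_k(1)$ to be length-$k$ words agreeing outside a bounded window --- interleaving-type images, or bounded modifications near the endpoints of a fixed infinite cubefree word such as $\mathbf{t}$ --- chosen so that, uniformly in $k$, the structural conditions of Theorem~\ref{t:kobayashi} hold and no cube straddles the boundary between two consecutive images; then every cube in $h_k(w)$ is forced to align with the block structure and therefore descends to a cube in $w$. The doubling operation $h \mapsto \mu \circ h$ (which is cubefree by Theorem~\ref{t:muexponents}) turns a cubefree $k$-uniform morphism into a cubefree $2k$-uniform one, so it suffices to describe the family for, say, odd lengths and to combine it with the small cases; note, however, that doubling alone cannot reach every length, so an infinite explicit family is genuinely needed.

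The main obstacle is precisely the design and uniform verification of this family: one must commit to an explicit description of $h_k(0)$ and $h_k(1)$ valid for all $k \ge N$, establish the structural conditions once and for all, and --- the genuinely delicate point --- rule out unsynchronized cubes crossing image boundaries for all $k$ simultaneously, which is exactly what the overlap condition (2b3) of Theorem~\ref{t:kobayashi}, or an analogous synchronization argument, is meant to control. Everything else --- the small cases, the doubling, and the finite checks --- is bookkeeping.
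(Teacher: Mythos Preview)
Your outline is sound and matches the architecture the paper actually uses, but what you have written is a plan rather than a proof: the decisive step---the explicit parametrized family $\{h_k\}$ for all large $k$ together with a uniform verification of Kobayashi's condition~(b3) and of the no-unsynchronized-cube property---is left entirely unspecified. You yourself flag this as ``the main obstacle,'' and indeed it is the whole content of the theorem; everything else (small $k$, doubling via $\mu$, the finite test set of length-$4$ words) is routine, as you say. Your $k=3$ example $0\mapsto 001$, $1\mapsto 101$ is correct and your residue-class argument for it is valid, but note that it violates condition~(b2) of Theorem~\ref{t:kobayashi}, so Kobayashi does not apply there and you are silently relying on the direct argument instead.

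For comparison, the paper supplies exactly the missing ingredient, in two different ways. One route (Remark~\ref{r:3uniform}) proves Theorem~\ref{t:comp3free}: for every $k\notin\{3,6\}$ there is a cubefree \emph{complementary} morphism, obtained as a clipped \TM morphism $f_{\ell,k}$ with an explicit $\ell$ depending on $k\bmod 8$ (Theorem~\ref{t:TMfact3free}), verified not via Kobayashi but via a marker argument (Lemma~\ref{l:cubefree}); the leftover lengths $3$ and $6$ are handled by $\rho:0\mapsto 001,\ 1\mapsto 011$ and $\rho\circ\mu$. The second route (Section~\ref{s:walnut3free}) takes images to be prefixes of the explicit sequences $\mathbf{x}=0\overline{\mathbf{t}}$ and $\mathbf{y}$ satisfying $\mathbf{y}=010\,\mu^2(\mathbf{y})$, shows with \texttt{Walnut} that these give cubefree morphisms for all $k\equiv 1,5\pmod 6$, and reaches the remaining $k$ by composing with powers of $\mu$ and $\rho$. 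Either of these is the concrete construction your proposal is missing; without one, the argument does not close.
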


%

\subsection{Questions and Results}

For the present study, we took Theorem~\ref{t:3free_lengths} as a starting point and addressed a more general question: for arbitrary $\alpha\in \hatr$, what are the possible lengths of uniform $\alpha$-free binary morphisms? 
Specifically, in this paper we answer the following particular questions:
\begin{itemize}
    \item Does the result of Theorem~\ref{t:3free_lengths} extend to arbitrary values $\alpha\in\hatr$ that are greater than 3? If so, does this extended result holds if we are further restricted to the class of complementary morphisms or its subclasses?
    \item Does the result of Theorem~\ref{t:3free_lengths} hold for the class of complementary morphisms or for some its subclasses?
    \item Does the result of Theorem~\ref{t:3free_lengths} hold for any $\alpha<3$?
\end{itemize}

Note that to deal with non-integer powers $\alpha$ we need morphisms satisfying Observation~\ref{o:necessary}, so the class of complementary morphisms is arguably the most natural choice.
We also note that Thue's morphism $\mu$, as well as all other morphisms from Theorem~\ref{t:73free}, are complementary.

\medskip
Our main results are as follows. We start with the first question.

\begin{theorem} \label{t:3plusfree}
    For every $\alpha\ge 3^+$ and each $k \ge 1$, there is an $\alpha$-free complementary morphism of length $k$.
\end{theorem}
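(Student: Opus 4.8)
The plan is to reduce the claim about all real $\alpha \ge 3^+$ to the single case $\alpha = 3^+$, and then to exhibit, for every $k \ge 1$, a concrete $k$-uniform complementary morphism that is $3^+$-free. For the reduction, observe that if a morphism $h$ is $3^+$-free, then it is automatically $\alpha$-free for every $\alpha > 3$: indeed, suppose $w$ is $\alpha$-free but $h(w)$ contains a factor of exponent $\ge \alpha > 3$; since $h(w)$ would then in particular contain a factor of exponent $> 3$, hence $\ge 3^+$ in $\hatr$, we must already have $w$ not $3^+$-free. But a word that is $\alpha$-free for $\alpha > 3$ need not be $3^+$-free, so a little care is needed: the cleanest route is to use that any factor of $h(w)$ of large exponent forces, by a standard synchronization/desubstitution argument for uniform morphisms, a factor of $w$ of comparably large exponent. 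Concretely I would prove: if $h$ is $k$-uniform, satisfies the first-letter/last-letter conditions of Observation~\ref{o:necessary}, and $h(x)$ is $3^+$-free for all short $3^+$-free words $x$, then $h$ is $3^+$-free; and separately that $3^+$-freeness of a uniform complementary morphism implies $\alpha$-freeness for all $\alpha \ge 3^+$. This second implication is really just monotonicity together with the fact that the set $\{\,\beta \in \hatr : \beta \ge 3^+\,\}$ is upward closed, so once $h$ preserves $3^+$-freeness it preserves every stronger freeness condition.

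The substantive part is constructing the morphisms. The natural source, given the abstract's emphasis, is the Thue--Morse word $\mathbf{t}$: the abstract states that \emph{every} prefix of $\mathbf{t}$ of length $k$ yields a $3^+$-free complementary morphism (setting $h(0)$ to be that prefix and $h(1) = \overline{h(0)}$; note $\mathbf{t}$ being complementary in the sense $\mathbf{t} = 0\,\overline{\mathbf{t}[1..]}\cdots$ makes this consistent). So the core lemma to establish is: for every $k \ge 1$, if $h(0) = \mathbf{t}[0..k-1]$ and $h(1) = \overline{\mathbf{t}[0..k-1]}$, then $h$ is a $3^+$-free morphism. Since $\mathbf{t}$ itself is overlap-free, every prefix of $\mathbf{t}$ is overlap-free, so condition (1) of Theorem~\ref{t:kobayashi} (with $\alpha = 3$, noting $\lceil 3^+\rceil = 3$) holds with room to spare, as does the bound $\exp(h(a)) < 2$. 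The first- and last-letter conditions (b1), (b2) hold because $h(0)$ and $h(1)$ are complementary. Condition (b3) — the overlap/conjugacy condition when a long prefix of one image equals a long suffix of the other — needs a short combinatorial argument using the structure of factors of $\mathbf{t}$ (in particular that $\mathbf{t}$ has no factor that is a long square), and I expect this to go through because $h(0)$ and $h(1)$ are complementary, which makes $h(0) = uv$, $h(1) = v'u$ with $|u| \ge k/2$ very restrictive. That reduces everything to condition (3): checking that $h(x)$ is $3^+$-free for every $3^+$-free word $x$ with $|x| \le \lceil 3^+ \rceil + 1 = 4$.

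The finitely many short words $x$ of length $\le 4$ do not immediately suffice, however, because $k$ ranges over all positive integers, so $h(x)$ ranges over infinitely many words — one cannot just enumerate. Here is where I would invoke the Walnut theorem-prover, as the abstract advertises. The key observation is that the map $k \mapsto (\mathbf{t}[0..k-1], \overline{\mathbf{t}[0..k-1]})$ is automatic: the prefixes of $\mathbf{t}$ are generated by the Thue--Morse automaton, so the predicate ``$h_k(x)$ contains a factor of exponent $\ge 3^+$ for some $3^+$-free $x$ with $|x|\le 4$'' can be expressed as a first-order statement about $\mathbf{t}$ over $(\Enn, +, V_2)$ (or directly over the Thue--Morse automatic structure), with $k$ as a free variable. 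Walnut then either certifies that this predicate is false for all $k$, or returns a counterexample. Carefully: a factor of $h_k(x)$ of exponent $\ge 3^+$ is a factor of $\mathbf{t}\,\overline{\mathbf{t}}\,\mathbf{t}\cdots$ with at most $4$ blocks, each block a length-$k$ prefix of $\mathbf{t}$ or its complement; one encodes ``there exist positions $i, p$ such that $\mathbf{t}[i..i+3p] $ sits inside $h_k(x)$ for one of the $\le 2^4$ admissible patterns $x$'' — a bounded disjunction — and asks Walnut to verify unsatisfiability over all $k, i, p$. I expect this to be the main obstacle, not conceptually but practically: formulating the predicate so that Walnut's automata do not blow up, and handling the boundary effects where a repetition straddles the junction between $h_k(x[j])$ and $h_k(x[j+1])$ (these junctions are exactly where complementarity matters). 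Once that automaton check succeeds, Theorem~\ref{t:kobayashi} applies to give $3^+$-freeness for every $k$, and combined with the first-paragraph reduction this yields Theorem~\ref{t:3plusfree}. For completeness I would also note that $k = 1, 2$ (where the Thue--Morse prefixes are just $0$, $01$) recover $\mu$ and the identity-like morphisms already covered by Theorem~\ref{t:muexponents}, so the small cases are consistent with known results.
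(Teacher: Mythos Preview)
Your proof of the $\alpha = 3^+$ case via Kobayashi's Theorem~\ref{t:kobayashi} together with a \texttt{Walnut} verification of condition~(c) is essentially what the paper does in Section~\ref{s:walnut3plus} as an \emph{alternative} proof of that special case, including the observations about conditions~(a) and~(b3) and the reduction of condition~(c) to a handful of length-$4$ words. So that portion is on target.

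The gap is your reduction from arbitrary $\alpha \ge 3^+$ to the single case $\alpha = 3^+$. The monotonicity you invoke runs the wrong way: for $\alpha > 3^+$, being $\alpha$-free is a \emph{weaker} condition on words than being $3^+$-free (every $3^+$-free word is $\alpha$-free, not conversely). Hence knowing that $h$ preserves $3^+$-freeness tells you nothing about $h(w)$ when $w$ is, say, $4$-free but contains a cube; yet such $w$ are exactly the new inputs you must control for $\alpha = 4$. Concretely, $w = 000$ is $4$-free but not $3^+$-free, and you need $\ce(h_k(000)) < 4$; the $3^+$-freeness of $h_k$ gives no information here. Your earlier sentence already spots this problem, but the proposed fixes are either a vague appeal to ``synchronization/desubstitution'' or the incorrect monotonicity claim. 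Kobayashi's theorem applied at a larger $\alpha$ would require checking condition~(c) for longer test words (length $\le \lceil \alpha \rceil + 1$), so the $3^+$ verification does not automatically transfer.

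The paper handles general $\alpha$ differently: it proves the quantitative Theorem~\ref{t:exponents}, namely $\ce(h_k(w)) = \ce(w)$ whenever $\ce(w) \ge 3$ and $\ce(h_k(w)) \le 3$ otherwise, via a direct combinatorial analysis of maximal $p$-periodic factors in $h_k(w)$ (Lemmas~\ref{l:blocks}--\ref{l:periods}). This equality of critical exponents is exactly what makes the passage to all $\alpha \ge 3^+$ immediate, and the paper explicitly notes it was unable to obtain this stronger statement with \texttt{Walnut}. If you want to keep your approach, you would need either an independent proof of this exponent-preservation property, or a uniform argument that Kobayashi's condition~(c) holds for $h_k$ at every $\alpha \ge 3^+$ simultaneously.
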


To prove Theorem~\ref{t:3plusfree}, we use the infinite \TM word
$$
\mathbf{ t} = 0110100110010110\cdots,
$$ 
which is the fixed point of Thue's morphism $\mu$ defined previously, as a source of images of letters for our morphisms.
Namely, we prove that for each $k\ge 1$, the complementary morphism 
$$h_k: 0\mapsto \mathbf{t}[0..k-1],\ 1\mapsto \overline{\mathbf{t}[0..k-1]}$$ is $\alpha$-free for all $\alpha>3$ (see Theorem~\ref{t:exponents} in Section~\ref{s:trunc}).
Furthermore, we observe that for some values of $k$, the morphism $h_k$ is cubefree, and we characterize these $k$ with a 4-state binary deterministic finite automaton (see Theorem~\ref{t:automaton} in Section~\ref{s:walnut3plus}).

The answer to the second question is given by the following theorem.

\begin{theorem} \label{t:comp3free}
    Cubefree complementary morphisms of length $k$ exist for all values $k \ge 1$ except $k=3$ and $k=6$.
\end{theorem}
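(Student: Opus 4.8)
The plan is to prove the two directions separately: non-existence for $k\in\{3,6\}$, and existence for every other $k$.

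\emph{Non-existence.} A complementary morphism of length $k$ is determined by the single word $w=h(0)$ of length $k$, so there are only $2^k$ of them, and only $2^{k-1}$ up to the symmetry $w\leftrightarrow\overline w$ (which merely post-composes with the renaming $0\mapsto 1,\ 1\mapsto 0$ and hence preserves cube-freeness). To rule a morphism out it suffices to exhibit one cubefree word $x$ with $h(x)$ containing a cube. For $k=3$ this is four cases, each killed by a cubefree word of length $\le 3$: $w=000$ gives $h(00)=0^6$; $w=001$ gives $h(01)=001110$, which contains $111$; $w=010$ gives $h(01)=010101$, which is a cube; and $w=011$ gives $h(010)=011100011$, which contains $000$. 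For $k=6$ there are $32$ candidates; I would dispatch them by a short computation, phrased as a Walnut query over the $2^6$ possible images (for each image, search for a cube in $h(x)$ over the finitely many short cubefree $x$).

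\emph{Existence.} I would split on whether $k$ has the form $3\cdot 2^n$. Suppose first that it does not. Then I take the images of $0$ and $1$ to be a length-$k$ factor of $\mathbf t$ and its complement. For $k=2^n$ the prefix morphism $h_{2^n}$ equals $\mu^n$, which is cubefree by Theorem~\ref{t:muexponents}; more generally the prefix morphism $h_k$ of Theorem~\ref{t:exponents} is cubefree exactly for the set of lengths recognized by the $4$-state automaton of Theorem~\ref{t:automaton}, and for the remaining admissible $k$ one passes to a suitable non-prefix factor $\mathbf t[i..i+k-1]$. To handle all these $k$ at once I would use that cube-freeness of a uniform morphism is decided by a test set of cubefree words of bounded length \cite{Keranen:1984,Richomme&Wlazinski:2007}, together with the fact that $\mathbf t$ is automatic: then ``some length-$k$ factor of $\mathbf t$ induces a cubefree complementary morphism'' is a predicate automatic in $k$, and it only remains to verify in Walnut that this predicate holds for every $k\notin\{3\cdot 2^n : n\ge 0\}$ --- which simultaneously explains why the lengths $3\cdot 2^n$ need separate treatment.

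Finally, for $k=3\cdot 2^n$ with $n\ge 2$, i.e.\ $k\in\{12,24,48,\ldots\}$, I note first that these lengths are not obtainable by composing shorter complementary morphisms, since any factorization $k=ab$ with $a,b\ge 2$ forces $3$ or $6$ to be one of the factors. So I would exhibit one explicit cubefree complementary morphism $g$ of length $12$, located by computer search and certified cubefree via the bounded-length test, and then take $g\circ\mu^{\,n-2}$: composition of complementary morphisms is complementary (a complementary morphism commutes with complementation), composition of cubefree morphisms is cubefree, and $g\circ\mu^{\,n-2}$ has length $12\cdot 2^{\,n-2}=3\cdot 2^n$. Together with the $\mathbf t$-factor case (which already covers $k=1,2,4,5,7,8,\ldots$) this accounts for every $k\ge 1$. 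I expect the main obstacle to be the $k\ne 3\cdot 2^n$ case: unlike the finitely many morphisms of length $3$ or $6$ and the single length-$12$ witness, it demands a cube-freeness guarantee for infinitely many morphisms simultaneously. The automatic-sequence/Walnut approach resolves this in principle, but it hinges on isolating a bounded-length test set for which ``$h$ is cubefree'' is a first-order automatic predicate, and on proving that the resulting automaton for ``good'' factor-lengths accepts precisely the complement of $\{3\cdot 2^n : n\ge 0\}$ --- in particular establishing the $4$-state automaton of Theorem~\ref{t:automaton} for prefixes and then lifting from prefixes to arbitrary factors of $\mathbf t$.
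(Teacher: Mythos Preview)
Your overall architecture matches the paper's: non-existence for $k\in\{3,6\}$ by exhaustion, existence for $k\ne 3\cdot 2^n$ via factors of $\mathbf t$, and existence for $k=3\cdot 2^n$ with $n\ge 2$ via an explicit length-$12$ complementary morphism $g$ composed with $\mu^{n-2}$. The paper even uses the same $g$ (namely $0\mapsto 011001001101$), certified cubefree by Kobayashi's criterion.

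The substantive gap is in the $k\ne 3\cdot 2^n$ step. You propose to encode ``some length-$k$ factor of $\mathbf t$ gives a cubefree complementary morphism'' as a first-order automatic predicate and discharge it in \texttt{Walnut}. The paper explicitly reports that this route is blocked by resource limits: already verifying Kobayashi's condition~(c) for the \emph{prefix} morphisms $h_k$ required minimizing automata with over $10^8$ states and tens of thousands of CPU seconds per word, and the attempt to handle non-prefix factors (which introduces an extra quantifier over the starting position) failed outright. So while your plan is logically coherent, it is not a proof until the computation runs, and on the paper's evidence it will not. The paper instead proves the key case combinatorially (Lemma~\ref{l:cubefree}): if a factor $v$ of $\mathbf t$ has prefix in $\{0110,1001\}$ and suffix in $\{0010,1101\}$ (or the reversed condition), then each product $v v,\ v\overline v,\ \overline v v,\ \overline v\,\overline v$ contains a unique \emph{marker}---a length-$5$ word such as $a a\overline a a a$ that never occurs in $\mathbf t$---sitting astride the block boundary. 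Any period of a $3^+$-power in $f_v(w)$ must carry a marker to another marker, forcing the period to be a multiple of $|v|$ and reducing cube-freeness of $f_v(w)$ to cube-freeness of $w$. One then checks (Theorem~\ref{t:TMfact3free}) that for every odd $m\ne 3$ some length-$m$ factor of $\mathbf t$ meets this prefix/suffix condition, with explicit starting positions depending on $m\bmod 8$; applying $\mu^n$ covers the even lengths. This marker argument is the idea your proposal is missing.

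One minor slip: your claim that every factorization $3\cdot 2^n=ab$ with $a,b\ge 2$ forces $3$ or $6$ among $\{a,b\}$ holds only for $n=2$ (e.g.\ $24=2\cdot 12$). It is harmless, since you go on to build $g\circ\mu^{n-2}$ directly rather than by induction on factorizations.
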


Proving Theorem~\ref{t:comp3free}, we find factors of the \TM word that give cubefree complementary morphisms of all lengths $k$ except those of the form $k=3\cdot 2^n$ (see Theorem~\ref{t:TMfact3free} in Section~\ref{s:TMfactors}).
The remaining cases are covered by morphisms of the form $g \circ\mu^n$, where $g$ is a certain complementary morphism of length 12.
Covering the exceptional lengths 3 and 6 with uniform (but not complementary) morphisms $\rho$ and $\rho\circ\mu$, where $\rho: 0\mapsto 001, 1\mapsto 011$, we get an alternative proof of Theorem~\ref{t:3free_lengths}.

In addition, we present two more series of cubefree complementary morphisms; the images of letters in these series are certain prefixes of two infinite words densely related to the \TM word.
Using these two series and the morphisms $\mu$ and $\rho$, one obtains another alternative proof of Theorem~\ref{t:3free_lengths} (see Section 5).

Finally, we slightly extend the argument from Theorem~\ref{t:comp3free} to obtain the following theorem, providing an answer to the third of studied questions.
\begin{theorem} \label{t:below3}
\leavevmode
    \begin{enumerate}
        \item For all $\alpha\in\hatr$ such that $\alpha<3$, $3$-uniform and $6$-uniform $\alpha$-free binary morphisms do not exist.
        \item For each $k\in \mathbb{N}$ such that $k\notin\{3,6\}$ there is a complementary morphism of length $k$ that is $\alpha$-free for some $\alpha<3$.
    \end{enumerate}
\end{theorem}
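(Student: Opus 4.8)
The plan is to prove the two parts essentially independently: part (1) is a non-existence statement, handled by first bounding $\alpha$ and then running a finite case analysis, while part (2) follows by combining Theorem~\ref{t:comp3free} with a general fact about cubefree complementary morphisms.

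For part (1), I assume $h$ is a $3$-uniform (resp.\ $6$-uniform) binary morphism that is $\alpha$-free for some $\alpha\in\hatr$ with $\alpha<3$, and derive a contradiction. I would first dispose of $\alpha<2^+$: then $h(0)$ must be an $\alpha$-free binary word of length $3$ or $6$, but no binary word of length $\ge4$ is square-free, and the only possibly-$\alpha$-free binary words of length $3$ are $010$ and $101$; but then $h(01)$ has critical exponent $\ge2\ge\alpha$ while $01$ is $\alpha$-free, a contradiction. For $2^+\le\alpha\le7/3$, Theorem~\ref{t:73free} forces $h\in\{\mu^n,\theta\circ\mu^n\}$, whose lengths are powers of $2$, not $3$ or $6$. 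This leaves $7/3<\alpha<3$, where $\alpha$ is non-integer, so by Observation~\ref{o:necessary} $h(0)$ and $h(1)$ have distinct first letters and distinct last letters; also, since $01,10,00,11,010,\dots$ are all $\alpha$-free, each of $h(0),h(1)$ and each short product $h(u)$ must be $\alpha$-free and hence cubefree. For $k=3$ these constraints leave only finitely many morphisms — up to complementation and mirror image, $0\mapsto001,1\mapsto100$; $0\mapsto001,1\mapsto110$; $0\mapsto011,1\mapsto100$; $0\mapsto011,1\mapsto110$; and $0\mapsto010,1\mapsto101$ — and a word of length $2$ destroys each (e.g.\ $h(10)=100001$ contains $0000$, while $h(01)$ contains $111$ or $1111$ or equals $(01)^3$); since in each case $\ce(h(w))\ge3>\alpha$ for an $\alpha$-free $w\in\{01,10\}$, we obtain our contradiction. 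For $k=6$ the method is the same: the endpoint conditions of Observation~\ref{o:necessary} together with cubefreeness of $h(0),h(1)$ and of $h(u)$ for short cubefree $u$ cut the candidate morphisms down to a finite (larger) list, and each is killed by a short overlap-free $w$ with $\ce(h(w))\ge3$. Carrying out this last case analysis — a refinement of the argument that rules out cubefree complementary morphisms of length $6$ inside the proof of Theorem~\ref{t:comp3free} — is the main obstacle, and is where I would expect to resort to a Walnut search.

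For part (2), Theorem~\ref{t:comp3free} yields, for each $k\notin\{3,6\}$, a cubefree complementary morphism $h$ of length $k$, so it suffices to prove that \emph{every} cubefree complementary morphism $h$ (of length $k$, say) is $\alpha$-free for some $\alpha<3$. In the spirit of Kobayashi's Theorem~\ref{t:kobayashi} — but without its hypothesis $\exp(h(a))<2$, which cannot hold once $k\ge4$ — I would classify factors $f$ of $h(w)$, for $w$ an $\alpha$-free (hence cubefree, so $h(w)$ cubefree) word, by the smallest period $p$ of $f$. If $p<P$ for a fixed threshold $P$, then $|f|=\exp(f)\cdot p<3P$, so $f$ lies inside $h(u)$ for a factor $u$ of $w$ of bounded length $U$; since $h$ is cubefree, $h(u)$ is cubefree, so $\exp(f)\le\beta_1$, where $\beta_1<3$ is the maximum of $\ce(h(u))$ over the finitely many cubefree words $u$ with $|u|\le U$ (a finite computation, verifiable by Walnut). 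If $p\ge P$, I would use that a cubefree complementary morphism is synchronizing with a bounded delay; taking $P$ past that delay forces $p$ to be a multiple of $k$, say $p=qk$, and then the block structure identifies $f$, up to truncation, with the $h$-image of a factor $v$ of $w$ of period $q$. Here complementarity enters crucially: since $h(0)$ and $h(1)$ differ at every position, the repetition in $f$ cannot extend past the endpoints of a maximal period-$q$ block of $w$, so $\exp(f)\le\exp(h(v))\le\ce(w)<\alpha$. Choosing $\alpha$ with $\beta_1<\alpha<3$, every factor of $h(w)$ then has exponent at most $\max(\beta_1,\ce(w))<\alpha$, so $h(w)$ is $\alpha$-free and thus $h$ is $\alpha$-free. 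Establishing the synchronization delay for cubefree complementary morphisms, and making the ``no extension'' bookkeeping rigorous, are the delicate points of this part.

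Finally, for the lengths $k=3\cdot2^n$ with $n\ge2$ — the only ones of that form outside $\{3,6\}$ — one can argue more cheaply, bypassing the synchronization analysis: the proof of Theorem~\ref{t:comp3free} provides (via the family $g\circ\mu^n$) a cubefree complementary morphism $g$ of length $12$, which is $\alpha$-free for some $\alpha\in(2,3)$ (by the previous paragraph, or directly), and since $\mu$ is $\beta$-free for every $\beta>2$ by Theorem~\ref{t:muexponents}, the composition $g\circ\mu^{n-2}$ — again complementary and of length $3\cdot2^n$ — is $\alpha$-free, which finishes part (2).
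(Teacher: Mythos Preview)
Your part~(a) argument is essentially the paper's: reduce to non-integer $\alpha$, invoke Observation~\ref{o:necessary} to pin down first and last letters, and kill each surviving candidate with a length-$2$ preimage whose image contains a cube. The paper compresses this to ``a short search'', so your added detail is harmless.

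One concrete slip: you write that Kobayashi's hypothesis $\exp(h(a))<2$ ``cannot hold once $k\ge4$''. This confuses $\exp$ with $\ce$. The condition $\exp(h(a))<2$ only says $\per(h(a))>k/2$; it says nothing about proper factors of $h(a)$ and is satisfied by most of the morphisms in play (e.g.\ by $g(0)=011001001101$). So Kobayashi's theorem is not obstructed for the reason you give; the paper's actual reason for not applying it wholesale to the clipped \TM morphisms is that some of them violate condition~(b3), not~(a).

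For part~(b) your route genuinely diverges from the paper's, and it contains a gap. The paper does \emph{not} try to prove that every cubefree complementary morphism is $\alpha$-free for some $\alpha<3$. It stays with the specific morphisms already constructed: for $k=3\cdot 2^n$ it uses Lemma~\ref{l:minkobayashi} on~$g$ (exactly your final paragraph), and for every other $k$ it simply re-enters the marker proof of Lemma~\ref{l:cubefree} and notes that the hypothesis $\exp(U)\ge3$ can be relaxed to $\exp(U)>3-1/k$, since for $p>k\ge5$ this still forces $|U|\ge 2p+5$, so $U$ still contains two occurrences of a marker at distance~$p$. That immediately upgrades Lemma~\ref{l:cubefree} to $(3-1/k)^+$-freeness for the odd-length morphisms of Theorem~\ref{t:TMfact3free}; composing with powers of $\mu$ then covers the even lengths.

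Your plan instead rests on the assertion that an arbitrary cubefree complementary morphism is synchronizing with bounded delay, so that large periods must be multiples of $k$. You offer no argument for this, and it is not a standard fact; the paper's own example $0\mapsto 011010010$, $1\mapsto 100101101$ shows that cubefree complementary morphisms can have a prefix of $h(0)$ equal to a suffix of $h(1)$ of length exceeding $k/2$, which is precisely the configuration that threatens synchronization. Even if the claim is true, proving it would be the real content of your argument, and you would effectively be rediscovering a marker mechanism morphism by morphism. The paper avoids this entirely: its morphisms already come equipped with explicit $5$-letter markers from Lemma~\ref{l:cubefree}, so the sharpening to $(3-1/k)^+$ costs two lines rather than a new structural lemma.
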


Along with more traditional methods of combinatorics on words, we use the theorem-prover
\texttt{Walnut} \cite{Mousavi:2016,Shallit:2022} designed to prove first-order sentences related to certain classes of infinite words.
Although this second approach is less general, it is conceptually quite simple, and the proof takes advantage of some existing software to replace case analysis.
It does, however, require some serious computation: sometimes \texttt{Walnut} cannot prove a first-order statement because it exceeds the limits of computational resources in the process. 
Using \texttt{Walnut}, we give an alternative proof of the crucial case $\alpha=3^+$ of Theorem~\ref{t:3plusfree}, though we were unable to prove Theorem~\ref{t:exponents} in this way.
The automaton in Theorem~\ref{t:automaton} is also obtained by \texttt{Walnut}.
The proof of Theorem~\ref{t:TMfact3free} using \texttt{Walnut} currently seems to be beyond its computational limits, and thus the prover cannot help with Theorem~\ref{t:comp3free} or Theorem~\ref{t:below3}.
On the positive side, \texttt{Walnut} proved cube-freeness of the morphisms from two infinite sequences, thus providing the second alternative proof of Theorem~\ref{t:3free_lengths}.

The paper is organized as follows. In Section~\ref{s:comb} we give combinatorial proofs of Theorems~\ref{t:3plusfree},~\ref{t:comp3free}, and~\ref{t:below3}.
In Section~\ref{s:walnut} we introduce \texttt{Walnut} and describe all proofs obtained by its use.
We end up with a short discussion in Section~\ref{s:discussion}.

\section{Combinatorial Proofs} \label{s:comb}

\subsection{Truncated \TM morphisms: proof of Theorem~\ref{t:3plusfree}}
\label{s:trunc}


For each integer $k \geq 1$, we define the
\emph{truncated \TM morphism} $h_k$ as the complementary morphism such that 
$h_k(0)=\mathbf{t}[0..k-1]$.
For example, $h_3(0) = 011$ and $h_3(1) = 100$.

The following theorem is a counterpart to Theorem~\ref{t:muexponents} and implies Theorem~\ref{t:3plusfree}. 
\begin{theorem}\label{t:exponents}
Suppose that $w$ is a binary word and $k$ is a positive integer. 
Then $\ce(h_k(w)) = \ce(w)$ if
$\ce(w) \geq 3$ and $\ce(h_k(w))\le 3$ if $\ce(w)<3$.
\end{theorem}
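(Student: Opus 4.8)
The plan is to prove the two bounds $\ce(h_k(w))\ge\ce(w)$ and $\ce(h_k(w))\le\max(\ce(w),3)$ separately; together they give the theorem, since the second reads $\ce(h_k(w))\le\ce(w)$ once $\ce(w)\ge 3$ (and the case $\ce(w)=\infty$, for infinite $w$, is covered by the first bound alone). The lower bound is immediate from $k$-uniformity: a factor $x$ of $w$ of length $\ell$ and period $p$ maps to a factor $h_k(x)$ of $h_k(w)$ of length $k\ell$ having $kp$ as a period, so $\exp(h_k(x))\ge\ell/p=\exp(x)$, and taking a supremum over $x$ gives the claim. Everything below concerns the upper bound.

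I would first reduce to odd $k$. Since $\mathbf t$ is a fixed point of $\mu$, we have $\mathbf t[0..2j-1]=\mu(\mathbf t)[0..2j-1]=\mu(\mathbf t[0..j-1])$, hence $h_{2j}=\mu\circ h_j$, and writing $k=2^a m$ with $m$ odd gives $h_k=\mu^a\circ h_m$. Theorem~\ref{t:muexponents} (that $\mu$ is $\alpha$-free for every $\alpha>2$) together with the lower bound applied to $\mu$ yields $\ce(\mu(v))\le\max(\ce(v),2)$ for all $v$; iterating, $\ce(h_k(w))=\ce(\mu^a(h_m(w)))\le\max(\ce(h_m(w)),2)$, so the upper bound for $h_m$ propagates to $h_k$. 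Thus we may assume $k$ is odd.

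Fix odd $k$ and let $y$ be a factor of $h_k(w)$ with $\exp(y)>3$; write $p=\per(y)$ and $n=|y|>3p$, the goal being $\exp(y)\le\ce(w)$. The structural heart is a synchronization property: \emph{for odd $k$, the pair $\{h_k(0),h_k(1)\}$ is a comma-free code}, i.e.\ every occurrence of $h_k(a)$ as a factor of $h_k(w)$ starts at a position divisible by $k$. I would prove this by assuming $h_k(a)$ (that is, $\mathbf t[0..k-1]$ or its complement) straddled a block boundary of some $h_k(bc)$, which forces $\mathbf t[0..k-1]$ to equal, on its two halves, a suffix and a prefix of $\mathbf t[0..k-1]$ or of $\overline{\mathbf t[0..k-1]}$; in the \emph{pure} cases this would make $\mathbf t[0..k-1]$ (or its complement) a proper power, impossible since no nonempty prefix of $\mathbf t$ is a square, and in the \emph{mixed} cases one uses that the only anti-periods of prefixes of $\mathbf t$ are powers of $2$, so the remaining constraint fails precisely because $k$ is odd (the few small odd $k$ below whatever threshold the argument needs are dispatched by direct inspection). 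Granting this, consider first the case $p\ge k$: then $n>3p\ge p+2k$, so some full block of $h_k(w)$ together with its $p$-shift lies inside $y$; the period $p$ makes these two blocks equal, their content is a code word, and comma-freeness forces $k\mid p$, say $p=kp'$.

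With $p=kp'$, say $y$ runs from inside block $b$ to inside block $b'$ of $h_k(w)$, where $b'-b\ge 3p'$ since $n>3kp'$. The full interior blocks give $w_{b+1}\cdots w_{b'-1}$ a period of $p'$ (here $h_k$ is injective on letters); and because $h_k(0)$ and $h_k(1)$ have distinct first letters and distinct last letters, the short nonempty partial block at each end of $y$, being equal by periodicity to the corresponding suffix/prefix of another full block, forces $w_b=w_{b+p'}$ and $w_{b'}=w_{b'-p'}$. Hence $x:=w_b w_{b+1}\cdots w_{b'}$ is a factor of $w$ of period $p'$ and length $b'-b+1\ge n/k$, so $\ce(w)\ge\exp(x)\ge(b'-b+1)/p'\ge n/(kp')=\exp(y)$; a supremum over such $y$ gives the bound. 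The remaining case, $p<k$, forces $y$ to lie within three consecutive blocks (it cannot contain two consecutive full blocks, since those would make a junction word periodic with period $<k$, contradicting comma-freeness), so it reduces to checking that $h_k(u)$ has critical exponent at most $3$ for every binary word $u$ of length $3$ — a claim one verifies uniformly in $k$ from the structure of the prefixes of $\mathbf t$, and the place where the constant $3$ (rather than something smaller) actually appears. I expect the main obstacle to be the synchronization lemma for odd $k$ — the case analysis of how $\mathbf t[0..k-1]$ can overlap itself and its complement, and the role of the parity of $k$ — with the $p<k$ verification and the boundary bookkeeping a close second.
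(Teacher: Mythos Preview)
Your overall architecture is sound and close in spirit to the paper's: the lower bound is immediate, and for the upper bound you correctly isolate a synchronization property (for odd $k$, the pair $\{h_k(0),h_k(1)\}$ is a comma-free code --- this is precisely the paper's Lemma~\ref{l:nointernal}, stated there for all $k$ that are not powers of $2$ and derived from the overlap bound Lemma~\ref{l:intersection}). Your reduction to odd $k$ via $h_{2j}=\mu\circ h_j$ is correct and slightly slicker than the paper's exclusion of powers of $2$ only. Once $k\mid p$ is established, your preimage-with-boundary argument (using that $h_k(0)$, $h_k(1)$ differ in first and last letters) matches the paper's Case~1.

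The genuine gap is the case $p<k$. You correctly observe that comma-freeness confines $y$ to at most three consecutive blocks, but then you write that this ``reduces to checking that $h_k(u)$ has critical exponent at most $3$ for every binary word $u$ of length $3$ --- a claim one verifies uniformly in $k$ from the structure of the prefixes of $\mathbf t$''. That sentence hides essentially all of the analytic content of the theorem. In the paper this is the bulk of the argument (Cases~2 and~3), and it is \emph{not} a routine verification: one needs sharp control over which periods a Thue--Morse prefix can carry and for how long (the paper's Lemma~\ref{l:periods}: e.g.\ a prefix with period $3\cdot 2^n$ has length at most $5\cdot 2^n$; a factor of $\mathbf t$ with period $m\cdot 2^n$, $m\ge 5$ odd, has length at most $(m+3)\cdot 2^n$), followed by a careful case split on the block decomposition $u_0u_1$ or $u_0u_1u_2$ to force $|U|\le 3p$. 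Your plan names no tool playing the role of Lemma~\ref{l:periods}, and without it there is no mechanism that actually produces the bound $3$ when $p<k$ --- yet this is exactly where the constant $3$ enters.

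A secondary remark: your sketch of the comma-free lemma (``pure cases give a square prefix of $\mathbf t$; in mixed cases anti-periods of $\mathbf t$-prefixes are powers of $2$'') is morally right but needs care about which of $j$, $k-j$ is at most $k/2$; the paper sidesteps this by first proving the quantitative bound $|u|\ge 3k/2$ of Lemma~\ref{l:intersection}, from which comma-freeness for non-powers of $2$ is immediate.
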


The proof of Theorem~\ref{t:exponents} is based on some properties of prefixes of $\mathbf{t}$ (we also call them \TM prefixes).
These properties are proved below in Lemmas~\ref{l:blocks}--\ref{l:periods}, and may be of independent interest.
We refer to the words $h_k(0)$ and $h_k(1)$ as \emph{$k$-prefixes}  and to $\mu^n(0), \mu^n(1)$ as \emph{$n$-blocks}.
The following auxiliary property was apparently known to Thue~\cite{Thue:1912}.

\begin{lemma}\label{l:blocks}
    Let $a\in\{0,1\}$, $n\ge1$.
    \begin{itemize}
    \item[(a)] In the word $\mu^n(a\,\overli{a})$, the $n$-blocks occur only at positions $0$ and $2^n$.
    \item[(b)] In the word $\mu^n(aa)$, the $n$-blocks occur only at positions $0, 2^{n-1}$, and $2^n$.
    \end{itemize}
\end{lemma}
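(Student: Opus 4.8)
The plan is to prove both parts by exploiting the self-similar structure of $\mathbf{t}$, namely the identity $\mu(\mathbf{t})=\mathbf{t}$ together with the fact that $\mu^n(0)$ and $\mu^n(1)=\overline{\mu^n(0)}$ are complementary, have the same length $2^n$, and form a ``synchronizing'' set: every occurrence of $\mu^n(0)$ or $\mu^n(1)$ inside $\mu^n(w)$ is aligned with the block decomposition whenever $w$ is long enough. The key combinatorial engine is the well-known synchronization property of the \TM word, which I would either cite or re-derive: in $\mathbf{t}$, the only occurrences of $00$ and $11$ begin at odd and even positions in a controlled way, and more precisely an occurrence of a length-$2^n$ block $\mu^n(a)$ forces its starting position to be a multiple of $2^n$ except in a few boundary situations. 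I expect that part~(a) and part~(b) differ precisely because $a\overline{a}$ has no internal ``seam'' creating a spurious block boundary, whereas $aa$ does: inside $\mu^n(aa)=\mu^n(a)\mu^n(a)$ the middle $2^{n-1}$-shift can coincidentally reproduce a block.

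First I would set up the induction on $n$. For the base case $n=1$, I check directly: $\mu(a\overline a)$ is a length-$4$ word ($0110$ or $1001$), and the $1$-blocks $01,10$ occur only at positions $0,2$; similarly $\mu(aa)\in\{0101,1010\}$ has its $1$-blocks at $0,1,2$. (One must be careful that "occur only at positions listed" means no occurrence starting at any other position up to the last valid one.) For the inductive step, suppose the claim holds for $n$; consider $\mu^{n+1}(a\overline a)=\mu\big(\mu^n(a\overline a)\big)$. Any occurrence of an $(n+1)$-block $\mu^{n+1}(b)=\mu(\mu^n(b))$ in $\mu^{n+1}(a\overline a)$ is an occurrence of the length-$2^{n+1}$ word $\mu^n(b)$ after applying $\mu^{-1}$ — but to descend cleanly I would instead argue at the level of $2$-blocks: an $(n+1)$-block $\mu^{n+1}(b)$ contains the $n$-blocks $\mu^n(b)\mu^n(\overline b)$ at its two halves, so an occurrence of $\mu^{n+1}(b)$ at position $p$ forces occurrences of $n$-blocks at $p$ and $p+2^n$; by the induction hypothesis applied to $\mu^n(a\overline a)$ these positions lie in $\{0,2^n\}$, hence $p=0$ and $b=a$, giving exactly the two claimed occurrences. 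For part~(b), the same descent gives that an occurrence of $\mu^{n+1}(b)$ at position $p$ in $\mu^{n+1}(aa)$ forces $n$-block occurrences at $p$ and $p+2^n$, which by the inductive hypothesis for $\mu^n(aa)$ lie in $\{0,2^{n-1},2^n\}$; since the two forced positions differ by exactly $2^n$, the only possibilities are $(p,p+2^n)=(0,2^n)$, yielding $p=0$, plus the degenerate possibility where the occurrence is not captured by this two-halves argument because it straddles so that only one $n$-block lies inside — that is the case producing $p=2^{n-1}$, which I would handle by noting $\mu^{n+1}(aa)[2^{n-1}..2^{n-1}+2^{n+1}-1]=\mu^n(\overline a)\mu^n(a)\mu^n(\overline a)$ and checking this equals $\mu^{n+1}(\overline a)$ only when... actually it does not, so the extra occurrence at $2^{n-1}$ in (b) must come from the $n=1$ phenomenon propagating, and I would track it explicitly rather than hand-wave.

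The main obstacle, and the step I would spend the most care on, is the boundary analysis: an occurrence of an $(n+1)$-block need not split as ``$n$-block at $p$, $n$-block at $p+2^n$'' if the occurrence is close to the end of the host word, or if the internal structure allows a shifted block to masquerade. Concretely, I need the statement ``every length-$2^{n+1}$ factor that equals $\mu^{n+1}(b)$ contains $\mu^n(b)$ as a prefix and $\mu^n(\overline b)$ as its second half, and both of these are genuine $n$-blocks whose positions are constrained by the induction hypothesis'' — this is immediate, but the converse control (that these are the only occurrences, with no off-by-$2^{n-1}$ surprise in case (a)) requires knowing that $\mu^n(a)$ and $\mu^n(\overline a)$ cannot overlap themselves nontrivially, i.e. $\per(\mu^n(a))=2^n$, or more precisely that $\mu^n(a\overline a)$ has no factor of the form (block, shifted block) at a non-multiple-of-$2^{n-1}$ position. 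I would therefore prove, as a sub-lemma or by invoking the overlap-freeness of $\mathbf{t}$ (Theorems~\ref{t:muexponents},~\ref{t:73free}), that $\mathbf{t}$ contains no overlap, which immediately rules out two occurrences of $\mu^n(a)$ at positions differing by less than $2^n$ (such a pair would create a factor of exponent $>2$... one must check the arithmetic: distance $d<2^n$ with a repeated block of length $2^n$ gives a factor of length $2^n+d$ and period $d$, exponent $(2^n+d)/d>2$ iff $d<2^n$, exactly an overlap). This overlap-freeness fact is the linchpin that collapses the case analysis, and it is available to us from the earlier results in the excerpt. With that in hand, both parts follow by the clean induction sketched above, and the only genuinely fiddly bit is verifying the small cases $n=1$ (and perhaps $n=2$ for (b)) by hand to seed the induction and pin down where the extra position $2^{n-1}$ originates.
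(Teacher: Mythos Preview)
Your induction has a real gap. In the step for (a) you write ``by the induction hypothesis applied to $\mu^n(a\overli a)$ these positions lie in $\{0,2^n\}$,'' but that hypothesis locates $n$-blocks inside the word $\mu^n(a\overli a)$ of length $2^{n+1}$, whereas the positions $p$ and $p+2^n$ you need to constrain live inside $\mu^{n+1}(a\overli a)$, a word of length $2^{n+2}$. Only the first half of that longer word is $\mu^n(a\overli a)$; you give no argument for $n$-blocks in the second half $\mu^n(\overli a\,a)$ or straddling the midpoint (the middle window is $\mu^n(\overli a\,\overli a)$, which would require the hypothesis of part (b), not (a)). So ``hence $p=0$'' does not follow, and the same defect recurs in your step for (b). Your handling of the extra position in (b) is also off: for $\mu^{n+1}(aa)$ the claimed positions are $0,2^n,2^{n+1}$, not $2^{n-1}$, and the factor you display has length $3\cdot 2^n$, not $2^{n+1}$.

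You do, however, correctly identify the decisive ingredient---overlap-freeness---and once you use it the induction is unnecessary. The paper's proof of (a) is one line: if an $n$-block occurred at some $0<p<2^n$ in $\mu^n(a\overli a)$, it would equal either $\mu^n(a)$ (overlapping the copy at position $0$) or $\mu^n(\overli a)$ (overlapping the copy at position $2^n$), and equal factors of an overlap-free word cannot overlap. For (b) the paper is much slicker than your plan to ``track it explicitly'': write $\mu^n(aa)=\mu^{n-1}(a\,\overli a\,a\,\overli a)$. This immediately exhibits the $n$-block $\mu^n(\overli a)=\mu^{n-1}(\overli a\,a)$ at position $2^{n-1}$, and since every $n$-block begins with an $(n-1)$-block, applying part (a) at level $n-1$ to the three consecutive windows $\mu^{n-1}(a\overli a)$, $\mu^{n-1}(\overli a a)$, $\mu^{n-1}(a\overli a)$ forces any such starting position into $\{0,2^{n-1},2^n\}$. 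No induction on $n$ is required at all.
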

\begin{proof}
    Since equal factors of an overlap-free word do not overlap, we immediately get (a). 
    For (b), we write $\mu^n(aa)=\mu^{n-1}(a \, \overli{a} \, a \, \overli{a})$ to see the occurrence of the $n$-block $\mu^n(\overli{a})$ at position $2^{n-1}$ and use (a) to show that there are no other occurrences.
\end{proof}
\begin{lemma}\label{l:intersection}
    Let $k\ge1$. Suppose a word $u$  begins with a $k$-prefix and ends with a $k$-prefix, and $|u| > k$.
    Then $|u|\ge 3k/2$.  Furthermore, equality is attained if and only if $k$ is a power of 2.
\end{lemma}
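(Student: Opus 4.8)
The plan is to reduce the claim to two short computations with the defining identities $\mathbf{t}[2i]=\mathbf{t}[i]$ and $\mathbf{t}[2i+1]=\overline{\mathbf{t}[i]}$ (immediate from $\mathbf{t}=\mu(\mathbf{t})$), together with the fact that $\mathbf{t}$ is $2^+$-free, so every factor of $\mathbf{t}$ has exponent at most $2$. Replacing $u$ by $\overline u$ if necessary --- this preserves all hypotheses and the conclusion --- I may assume that $u$ begins with $P:=\mathbf{t}[0..k-1]=h_k(0)$. If $|u|\ge 2k$, then $|u|\ge 2k>3k/2$ for $k\ge1$ and equality cannot hold, so I may assume $k<|u|<2k$. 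Then the prefix-occurrence and the suffix-occurrence of $k$-prefixes in $u$ overlap in exactly $s:=2k-|u|$ letters, with $0<s<k$; put $q:=|u|-k=k-s\ge1$. The overlap condition states that the length-$s$ suffix of $P$ equals the length-$s$ prefix of the suffix $k$-prefix of $u$, which is $P$ (Case~A) or $\overline P$ (Case~B). In each case I must prove $s\le k/2$ (equivalently $|u|=2k-s\ge 3k/2$) and then decide when $s=k/2$.

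In Case~A the overlap condition says $\mathbf{t}[k-s+i]=\mathbf{t}[i]$ for $0\le i\le s-1$, i.e.\ $P$ has period $q=k-s$; hence $k/q\le\exp(P)\le2$ and $s\le k/2$. If $s=k/2$ then $P$ is a square $XX$ with $|X|=k/2$, so on one hand $\mathbf{t}[k-1]=X[k/2-1]=\mathbf{t}[k/2-1]$, and on the other hand $\mathbf{t}[k-1]=\mathbf{t}[2(k/2-1)+1]=\overline{\mathbf{t}[k/2-1]}$ --- a contradiction. So in Case~A we get $s<k/2$, and equality does not occur. In Case~B the condition says $\mathbf{t}[q+i]=\overline{\mathbf{t}[i]}$ for $0\le i\le s-1$. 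If $s>q$, then $i=q$ is admissible, giving $\mathbf{t}[2q]=\overline{\mathbf{t}[q]}$; but $i=0$ gives $\mathbf{t}[q]=\overline{\mathbf{t}[0]}=1$, whence $\mathbf{t}[2q]=0$, while $\mathbf{t}[2q]=\mathbf{t}[q]=1$ --- a contradiction. Therefore $s\le q$, that is, $s\le k/2$.

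By the previous paragraph, equality $|u|=3k/2$ can occur only in Case~B with $s=q=k/2$; then $k$ is even and $\mathbf{t}[q+i]=\overline{\mathbf{t}[i]}$ for $0\le i\le q-1$. To finish I would run a descent on $q$: if $q$ is even, restricting the relation to even indices $i=2i'$ and using the two identities turns it into the same relation with $q/2$ in place of $q$. Iterating, one reaches an odd number $m$ with $\mathbf{t}[m+i]=\overline{\mathbf{t}[i]}$ for $0\le i\le m-1$; restricting once more to even indices $i=2i'$ (so $m+2i'$ is odd) and applying the identities yields $\mathbf{t}[i'+\tfrac{m-1}2]=\mathbf{t}[i']$ for $0\le i'\le\tfrac{m-1}2$, i.e.\ $\mathbf{t}[0..m-1]$ has period $\tfrac{m-1}2$. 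For $m\ge3$ this word would have exponent $\tfrac{2m}{m-1}>2$, impossible; hence $m=1$, so $q$ is a power of $2$ and $k=2q$ is a power of $2$ as well. Conversely, if $k=2^n$ with $n\ge1$, the word $u=\mu^{n-1}(010)$ has length $3\cdot2^{n-1}=3k/2$, its length-$2^n$ prefix equals $\mu^{n-1}(01)=\mu^n(0)=h_k(0)$, and its length-$2^n$ suffix equals $\mu^{n-1}(10)=\mu^n(1)=h_k(1)$, so equality is attained. (For $k=1$ the quantity $3k/2$ is not an integer; ``power of $2$'' is to be read as $2^n$ with $n\ge1$.) The only genuinely delicate point is the descent step in Case~B that pins $q$ down to a power of $2$; the inequality $|u|\ge3k/2$ itself is a one-line consequence of the two substitution identities in each of the two cases.
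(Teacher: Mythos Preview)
Your proof is correct and takes a genuinely different route from the paper's. The paper relies on the auxiliary Lemma~\ref{l:blocks} about where $n$-blocks can occur inside $\mu^n(a\overline{a})$ and $\mu^n(aa)$, and splits on whether $k$ is exactly a power of $2$ or lies strictly between two consecutive powers; the argument is a position-by-position analysis of where the $(n-1)$-block at the start of the second $k$-prefix can land inside the first. You bypass that lemma entirely and work only with the substitution identities $\mathbf{t}[2i]=\mathbf{t}[i]$, $\mathbf{t}[2i+1]=\overline{\mathbf{t}[i]}$ together with $2^+$-freeness: in Case~A the overlap forces a period of $P$, and the exponent bound gives $s\le k/2$ (with strict inequality via the parity trick on the last letter); in Case~B the single evaluation at $i=q$ kills $s>q$ outright, and the $2$-adic descent pinning $q$ to a power of $2$ settles the equality case. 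Your approach is more self-contained and arguably more elementary, since it does not need the separate block-occurrence lemma; the paper's approach, on the other hand, makes the actual structure of the extremal word $u=\mu^{n-1}(a\overline{a}a)$ visible immediately rather than only at the end. Both arguments are clean; yours has the advantage of isolating exactly which properties of $\mathbf{t}$ are used (the two recurrences and overlap-freeness, nothing more).
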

\begin{proof}
    Let $u$ begin with a $k$-prefix $s$ and end with a $k$-prefix $s'$.   The result is trivial for
    $k=1$, so assume $k>1$.
    
    First consider the case $k=2^n$ for some $n\ge1$.
    We have $s=\mu^n(a)=\mu^{n-1}(a)\mu^{n-1}(\overli{a})$ for some $a\in\{0,1\}$.
    By Lemma~\ref{l:blocks} (a), the word $s$ has occurrences of $(n-1)$-blocks only at the positions 0 and $2^{n-1}$.
    Since $s'$ begins with a $(n-1)$-block, it cannot begin with $u$ at a position smaller than $2^{n-1}$.
    If $s'$ begins at this position, we have $u=\mu^{n-1}(a\overli{a}a)$ and $|u|=3k/2$; otherwise, $|u|>3k/2$.

    Now let $2^n<k<2^{n+1}$ for some $n\ge1$.
    Then $s$ has the prefix $\mu^{n-1}(a\overli{a})\overli{a}$ and is a proper prefix of $\mu^{n-1}(a\,\overli{a}\,\overli{a}\,a)$.
    If $s'$ overlaps $s$ by at most $2^{n-1}$ symbols, then $k>2^n$ implies $|u|>3k/2$.
    Now suppose that $s'$ overlaps $s$ by more than $2^{n-1}$ symbols.
    In particular, $s'$ begins with a $(n-1)$-block contained in $s$.
    Lemma~\ref{l:blocks} gives us all possible positions of this $(n-1)$-block in $s$.
    At position $2^{n-1}$, the word $s$ contains the factor $\mu^{n-1}(\overli{a})\overli{a}$, which is not a $k$-prefix and thus not a prefix of $s'$.
    At position $3\cdot 2^{n-2}$, the word $s$ contains the factor $\mu^{n-1}(a)a$, which is not a $k$-prefix as well.
    Finally, if $s'$ begins at position $2^n$ of $u$, the condition $k<2^{n+1}$ implies that $s'$ overlaps $s$ by less than $k/2$ positions.
    Hence $|u|>3k/2$, as required.    
\end{proof}

\begin{lemma} \label{l:nointernal}
Suppose that $k$ is not a power of 2.
Then the concatenation of two $k$-prefixes does not contain a $k$-prefix as an internal factor.
\end{lemma}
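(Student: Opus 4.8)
The plan is to derive a contradiction by applying Lemma~\ref{l:intersection} twice — once to the part of the concatenation lying to the left of a hypothetical internal occurrence, and once to the part lying to the right — and then observing that the two resulting inequalities on the position of that occurrence are incompatible.

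Concretely: write $w=pq$, where $p$ and $q$ are $k$-prefixes (there are four possible choices, but the argument will not need to distinguish among them), and suppose toward a contradiction that some $k$-prefix $r$ occurs in $w$ at a position $j$ that makes it internal, which forces $1\le j\le k-1$. First I would consider the prefix $u:=w[0\mathbin{..}j+k-1]$ of $w$, i.e.\ the part running from the start of $w$ to the end of this occurrence of $r$. It begins with the $k$-prefix $p$ and ends with the $k$-prefix $r$, and $|u|=k+j>k$, so Lemma~\ref{l:intersection} applies and gives $|u|\ge 3k/2$; since $k$ is not a power of $2$, the inequality is \emph{strict}, hence $k+j>3k/2$, i.e.\ $j>k/2$.

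Symmetrically, I would consider the suffix $v:=w[j\mathbin{..}2k-1]$, the part running from the start of the occurrence of $r$ to the end of $w$. It begins with the $k$-prefix $r$ and ends with the $k$-prefix $q$, and $|v|=2k-j>k$, so Lemma~\ref{l:intersection} again yields a strict inequality $2k-j>3k/2$ (again because $k$ is not a power of $2$), i.e.\ $j<k/2$. The two conclusions $j>k/2$ and $j<k/2$ contradict each other, so no internal occurrence of a $k$-prefix can exist.

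There is essentially no real obstacle here once Lemma~\ref{l:intersection} is available; the only point requiring care is that both applications must use the \emph{strict} form of that lemma, which is exactly what the ``equality holds iff $k$ is a power of $2$'' clause provides, and this is where the hypothesis that $k$ is not a power of $2$ is used (and used essentially — when $k$ is a power of $2$ the value $j=k/2$ is genuinely realized, e.g.\ by $u=\mu^{n-1}(a\,\overline a\,a)$, so the statement would fail without that hypothesis).
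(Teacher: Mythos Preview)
Your proof is correct and is exactly the natural expansion of the paper's one-line argument (``The existence of such a factor contradicts Lemma~\ref{l:intersection}''): the paper implicitly intends precisely this double application of Lemma~\ref{l:intersection} to the left and right halves, and your observation that the strict-inequality clause is what makes the two bounds incompatible (and is where the hypothesis that $k$ is not a power of $2$ enters) is spot on.
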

\begin{proof}
    The existence of such a factor contradicts Lemma~\ref{l:intersection}.    
\end{proof}

\begin{lemma}\label{l:periods}
    Let $p=m\cdot 2^n$ with $m$ odd and $n\ge 0$.
    \begin{itemize}
    \item[(a)]  If $m\ge 5$, then any factor of $\mathbf{t}$ with period $p$ has length at most $(m+3)\cdot 2^n$.
    \item[(b)] If some $k$-prefix has proper period $p$, then
        \begin{itemize}
        \item[(i)] $m\ne 1$;
        \item[(ii)] if $m=3$, then $k\le 5\cdot 2^n$; and 
        \item[(iii)] if $m=5$, then $k\le 6\cdot 2^n$.
        \end{itemize}
    \end{itemize}
\end{lemma}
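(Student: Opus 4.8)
The workhorse throughout is the self-similarity of the \TM word: since $\mathbf{t}=\mu^n(\mathbf{t})$ and $\mu^n(a)$ begins with $a$, one has $\mathbf{t}[c\cdot 2^n]=\mathbf{t}[c]$ for all $c,n\ge 0$, and more generally the length-$(\ell\cdot 2^n)$ prefix of $\mathbf{t}$ equals $\mu^n$ of the length-$\ell$ prefix. I would prove part (b) by direct inspection and part (a) by induction on $n$. For (b), a word and its complement have the same periods, so I may assume the $k$-prefix is $\mathbf{t}[0..k-1]$; "$p$ is a proper period" then means $p<k$ and $\mathbf{t}[i]=\mathbf{t}[i+p]$ for $0\le i\le k-1-p$. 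For (i): a period $p=2^n<k$ would force (taking $i=0$) $\mathbf{t}[0]=\mathbf{t}[2^n]$, i.e., $\mathbf{t}[0]=\mathbf{t}[1]$, i.e., $0=1$. For (ii) and (iii) I assume $k$ is one larger than the claimed bound and exhibit the single periodicity equation that fails: if $p=3\cdot 2^n$ and $k\ge 5\cdot 2^n+1$, the index $i=2\cdot 2^n$ is admissible and gives $\mathbf{t}[2\cdot 2^n]=\mathbf{t}[5\cdot 2^n]$, i.e., $\mathbf{t}[2]=\mathbf{t}[5]$, i.e., $1=0$; if $p=5\cdot 2^n$ and $k\ge 6\cdot 2^n+1$, the index $i=2^n$ is admissible and gives $\mathbf{t}[2^n]=\mathbf{t}[6\cdot 2^n]$, i.e., $\mathbf{t}[1]=\mathbf{t}[6]$, i.e., $1=0$. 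Hence all three bounds.

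\textbf{Part (a), base case $n=0$.} Suppose $v$ is a factor of $\mathbf{t}$ with odd period $m\ge 5$. Since $\mathbf{t}$ is overlap-free, a period-$m$ factor has length at most $2m$, so if $|v|\ge m+4$ we may write $v=uyu$ where $u$ is the prefix of $v$ of length $|v|-m\ge 4$ and $|uy|=m$. Then the single word $u$ occurs in $\mathbf{t}$ at two positions differing by the odd number $m$. This is impossible, because every factor of $\mathbf{t}$ of length $\ge 4$ occurs only at positions of one fixed parity. This last fact is a finite check: the length-$4$ factors occurring at even positions are exactly the images $\mu(ab)$ of the four length-$2$ factors $ab$, namely $\{0101,0110,1001,1010\}$, while those occurring at odd positions are exactly the words $\overline a\,b\,\overline b\,c$ arising from the six length-$3$ factors $abc$, namely $\{0010,0011,0100,1011,1100,1101\}$, and these two sets are disjoint; a longer factor inherits the parity of its length-$4$ prefix. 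Therefore $|v|\le m+3$.

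\textbf{Part (a), inductive step.} Fix $n\ge 1$, put $q=m\cdot 2^{n-1}$ and $p=2q$, and let $V$ be a \emph{maximal} factor of $\mathbf{t}$ with period $p$ (these are finite since $\mathbf{t}$ is aperiodic); bounding $|V|$ suffices, and I may assume $|V|>p$ since otherwise $|V|\le p<(m+3)2^n$. Reading off the level-$1$ block decomposition $\mathbf{t}=\mu(\mathbf{t})$, write $V=x\,\mu(w)\,y$, where $\mu(w)$ is the concatenation of the two-letter blocks wholly inside this occurrence of $V$, and $x$ (resp.\ $y$) is the lone second (resp.\ first) letter of a block protruding past the left (resp.\ right) end of $V$; thus $|x|,|y|\in\{0,1\}$, $|V|=|x|+2|w|+|y|$, and $|V|>p$ forces $|w|\ge q$. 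The core claim is $x=y=\varepsilon$. Indeed, if $|x|=1$ then $V$ begins at an odd position $i$ and the two halves of the protruding block give $\mathbf{t}[i-1]=\overline{V[0]}$; maximality on the left means $\mathbf{t}[i-1]\ne V[p-1]$, so $V[p-1]=V[0]$, while periodicity gives $V[0]=V[p]$, hence $V[p-1]=V[p]$. But $|w|\ge q$ ensures the positions $p-1,p$ of $V$ lie in $\mu(w)$, where they are the two letters of a single block $\mu(w[q-1])=w[q-1]\,\overline{w[q-1]}$; in particular $V[p-1]\ne V[p]$ — a contradiction. The case $|y|=1$ is symmetric, using maximality on the right together with the two positions $|V|-1-p,\,|V|-p$. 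Thus $V=\mu(w)$; the $2$-uniformity of $\mu$ then forces $w$ to have period $q=m\cdot 2^{n-1}$, and $w$ is a factor of $\mathbf{t}$, so the induction hypothesis gives $|w|\le(m+3)2^{n-1}$ and hence $|V|=2|w|\le(m+3)2^n$.

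\textbf{Main obstacle.} I expect the core claim of the inductive step to be the only delicate point. The things to get right are: converting "$V$ is a maximal period-$p$ run that starts (ends) inside a block" into the equation $V[p-1]=V[0]$ (resp.\ $V[|V|-p]=V[|V|-1]$); verifying that the relevant positions $p-1,p$ (and their mirror images $|V|-1-p,|V|-p$) genuinely sit inside the block-aligned factor $\mu(w)$, which is exactly what the inequality $|w|\ge q$ provides; and keeping the even/odd position bookkeeping straight. Everything else — part (b) and the base case of (a) — is routine once the correct periodicity instance, or the finite parity check, is identified.
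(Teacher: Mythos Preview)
Your proof is correct. Part (b) is essentially the paper's argument rephrased via the scaling identity $\mathbf{t}[c\cdot 2^n]=\mathbf{t}[c]$; the paper instead writes out the explicit prefixes $\mu^n(a)\overline{a}$, $\mu^n(a\,\overline a\,\overline a\,a\,\overline a)a$, $\mu^n(a\,\overline a\,\overline a\,a\,\overline a\,a)a$ and points to the last letter that breaks the period, but the content is identical.

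Part (a), however, follows a genuinely different route. The paper works directly at level $n$: it decomposes the maximal period-$p$ factor as $u_0u_1\cdots u_r$ with the $u_i$ being $n$-blocks (plus a partial first and last piece), uses maximality to force $u_0=u_m$ and $u_r=u_{r-m}$, then exploits the parity of $m$ to locate $(n{+}1)$-blocks and reach a contradiction via the rule ``$\mu^n(a\,\overline a\,a\,\overline a)$ is always followed by $\mu^n(\overline a)$''. Your approach instead inducts on $n$: the base case $n=0$ is handled by the synchronization fact that every length-$4$ factor of $\mathbf{t}$ determines the parity of its position (your finite check of the disjoint sets $\{0101,0110,1001,1010\}$ versus $\{0010,0011,0100,1011,1100,1101\}$), and the step uses a clean desubstitution: a maximal period-$2q$ run with $|V|>p$ must be block-aligned, hence $V=\mu(w)$ with $w$ a period-$q$ factor of $\mathbf{t}$. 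The paper's argument is self-contained at a single scale and yields the bound in one pass; yours is more modular, reuses a standard rigidity property of $\mathbf{t}$, and makes the role of $\mu$ explicit. Both give the same bound with comparable effort; the paper also offers a third proof of this lemma via \texttt{Walnut}.
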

\begin{proof}
\leavevmode
    \begin{itemize}
    \item[(a)]
    Let $u$ be the longest factor of $\mathbf{t}$ with period $p$. 
    Write $u=u_0u_1\cdots u_{r-1}u_r$, where $u_0$ is a nonempty suffix of an $n$-block, $u_1,\ldots, u_{r-1}$ are $n$-blocks, and $u_r$ is a nonempty prefix of an $n$-block.
    If $r\le m+2$, then the length requirement trivially holds, so let us assume $r\ge m+3$.
    Since $u$ has period $p$, the word $u_0$ is a suffix of $u_m$, and $u_r$ is a prefix of $u_{r-m}$.
    Then $u_ru_1\cdots u_{r-1}u_{r-m}$ has period $p$.
    Since $u$ is the longest word with this period, we have $u_0=u_m$ and $u_r=u_{r-m}$.
    Thus $u$ is a product of $n$-blocks.
    Since $m$ is odd, either $u_0u_1$ or $u_mu_{m+1}$ is an $(n+1)$-block.
    As $u_0u_1=u_mu_{m+1}$, we conclude that $u_1\ne u_0$.
    By the same argument, $u_2\ne u_1$ and $u_3\ne u_2$.
    Hence, $u$ begins with $\mu^n(a\,\overli{a}\,a\,\overli{a})$ for some $a\in\{0,1\}$ and has another occurrence of this factor at position $p$.
    Since $\mathbf{t}$ is overlap-free, the factor $\mu^n(a\,\overli{a}\,a\,\overli{a})$ in $\mathbf{t}$ is always followed by $\mu^n(\overli{a})$.
    Thus, $r\ge m+4$ and $u_0\cdots u_4=u_m\cdots u_{m+4}=\mu^n(a\,\overli{a}\,a\,\overli{a}\,\overli{a})$.
    This contradicts the fact that either $u_3u_4$ or $u_{m+3}u_{m+4}$ is an $(n+1)$-block.
    As our assumption $r\ge m+3$ implies contradiction, we get $r\le m+2$ and then $|u|\le (m+3)\cdot 2^n$.

    \item[(b)] Let $a$ be the first letter of the $k$-prefix.
    The $(2^n+1)$-prefix equals $\mu^n(a)\overli{a}$.
    The last $\overli{a}$ immediately breaks the period $2^n$, so we get (i).
    Next, the ($5\cdot2^n+1$)-prefix equals $\mu^n(a\,\overli{a}\,\overli{a}\,a\,\overli{a})a$.
    The last $a$ breaks the period $3\cdot2^n$, implying (ii).
    Finally, the ($6\cdot2^n+1$)-prefix equals $\mu^n(a\,\overli{a}\,\overli{a}\,a\,\overli{a}\,a)a$.
    The last $a$ breaks the period $5\cdot2^n$, implying (iii).
    \end{itemize}
\end{proof}

\begin{proof}[Proof of Theorem~\ref{t:exponents}]
    We suppose that $k$ is not a power of 2, for otherwise the result follows from Theorem~\ref{t:muexponents}.
    
    Let $W=h_k(w)$ and let $p$ be an arbitrary positive integer.
    In $W$, consider the longest factor $U$ with $\per(U)=p$.
    To prove the theorem it is sufficient to show that either $\exp(U)\le 3$ or $w$ has a factor of exponent $\exp(U)$.
    We assume that $\exp(U)> 2$ (otherwise, there is nothing to prove).
    Then $U$ is not a factor of a $k$-prefix and thus can be written as $U=u_0u_1\cdots u_{r-1}u_r$, where $r\ge1$, $u_0$ is a nonempty suffix of a $k$-prefix, $u_1,\ldots,u_{r-1}$ are $k$-prefixes, and $u_r$ is a nonempty prefix of a $k$-prefix.
    Let us consider the possible options for $p$ and $k$.
    We write $p=m\cdot 2^n$ with $m$ odd.
    
    \bigskip\noindent\emph{Case 1}: $p=ck$ for an integer $c$.
    Then $u_0$ is a suffix of $u_c$, $u_r$ is a prefix of $u_{r-c}$, and the word $u_ru_1\cdots u_{r-1}u_{r-c}$ has period $p$ (cf.\ the proof of Lemma~\ref{l:periods} (a)).
    Since $U$ has maximal length among factors of $W$ with period $p$, we have $u_0=u_c$, $u_r=u_{r-c}$, implying that $U$ is a product of $k$-prefixes.
    Then its preimage $u=h_k^{-1}(U)$, which is a factor of $w$, has period $c=p/k$.
    Since $|u|=|U|/k$, we have $\exp(u)=\exp(U)$, as required.

    \bigskip\noindent\emph{Case 2}: $p<k$ and $r=1$. 
    If $p \ge|u_1|$, then $\exp(U)\le 3$, since $u_0$ is overlap-free. 
    So let $p<|u_1|$. 
    Hence $p$ is a proper period of a prefix of $\mathbf{t}$ or $\overli{\mathbf{t}}$. 
    We apply Lemma~\ref{l:periods}.
    Its statements (a) and (b) (iii) imply that $|U|\le(2m+4)\cdot2^n$ for $m=5$ and $|U|\le (2m+6)\cdot2^n$ for $m\ge7$.
    In both cases, $\exp(U)<3$.
    As statement (b) (i) guarantees $m\ne1$, only the case $m=3$ remains.
    
    In this case, $u_1$ is a prefix of $\mu^n(a\,\overli{a}\,\overli{a}\,a\,\overli{a})$ for some $a\in\{0,1\}$ by statement (b) (ii).
    To preserve the period $p$, $u_0$ must be a suffix of some word $\mu^n((a\,\overli{a}\,\overli{a})^i)$.
    The longest such suffix that is also a factor of $\mathbf{t}$ is $\mu^n(\overli{a}\,a\,\overli{a}\,\overli{a})$.
    Indeed, $\mu^n(\overli{a}\,a\,\overli{a}\,\overli{a})$ is preceded in $\mathbf{t}$ by the last letter of $\mu^n(a)$ and not of $\mu^n(\overli{a})$.
    Overall, $|U|=|u_0|+|u_1|\le 9\cdot2^n\le 3p$, and we get $\exp(U)\le 3$, as required.

    \bigskip\noindent\emph{Case 3}: $p<k$ and $r>1$.
    A product of two $k$-prefixes has no period $p<k$: if such a period exists, this product contains a $k$-prefix as an internal factor, contradicting Lemma~\ref{l:nointernal}.
    Hence $r=2$.
    Let $u_2'$ be the $k$-prefix containing $u_2$ as a prefix. 
    If $|u_2|\ge p$, then the product $u_1u_2'$ contains a copy of $u_1$ as an internal factor, again contradicting Lemma~\ref{l:nointernal}.
    Hence $|u_2|<p$.
    A symmetric argument implies $|u_0|<p$; a better bound for $|u_0|$ is proved below.

    Let $u_1=yz$, where $|y|=p$.
    Since $p$ is a proper period of $u_1$, we have $m\ge 3$ by Lemma~\ref{l:periods} (b).
    As $y$ is a $p$-prefix, we can write  $y=s_1\cdots s_m$, where $s_1,\ldots,s_m$ are $n$-blocks.
    Let $s_1=\mu^n(a)$; then $s_2=s_3=\mu^n(\overli{a})$.
    Further, $z$ is a $(k-p)$-prefix, and therefore a prefix of $\mu^n(a\,\overli{a}\,\overli{a})$ by Lemma~\ref{l:periods} (a).
    Therefore, $u_1$ is a prefix of $\mu^n(a\,\overli{a}\,\overli{a})s_4\cdots s_m\mu^n(a\,\overli{a}\,\overli{a})$.
    As $m$ is odd, the factor $s_m\mu^n(a)$ of $\mathbf{t}$ is an $(n+1)$-block, which means $s_m=\mu^n(\overli{a})$. 
    This $(n+1)$-block is followed in $\mathbf{t}$ by another $(n+1)$-block and not by  $\mu^n(\overli{a}\,\overli{a})$.
    Hence, $z$ is a prefix of $\mu^n(a\,\overli{a})$.
    Furthermore, $s_{m-2}s_{m-1}$ and $s_{m-4}s_{m-3}$ (if it exists) are also $(n+1)$-blocks.
    Since $y$ is overlap-free, it therefore ends either with $\mu^n(\overli{a}\,\overli{a})$ or with $\mu^n(\overli{a}\,\overli{a}\,a\,\overli{a})$.
    
    Since $p$ is a period of $U$ and $|u_0|<p$, we know that $u_0$ is a suffix of $y$.
    If $|u_0|<2^n$, we have $|u_0u_1|<2p$ by Lemma~\ref{l:periods} and thus $|U|<3p$, implying $\exp(U)<3$.
    So let $|u_0|\ge 2^n$.
    Then $u_0$ ends with the $n$-block $\mu^n(\overli{a})$.
    As $u_0$ is a suffix of either $u_1$ or $\overli{u_1}$, the word $u_1$ also ends with an $n$-block.
    By Lemma~\ref{l:blocks} (a), only two cases are possible: either $z=\mu^n(a\,\overli{a})$ and $u_0$ is a suffix of $u_1$, or $z=\mu^n(a)$ and $u_0$ is a suffix of $\overli{u_1}$.
    Recall that $u_0$ is also a suffix of $y$.
    
    If $z=\mu^n(a\,\overli{a})$, then $y$ ends with $\mu^n(\overli{a}\,\overli{a})$ (otherwise $u_1$ is not overlap-free).
    Hence the longest common suffix of $y$ and $u_1$ is $\mu^n(\overli{a})$.
    As a result, $|u_0|= 2^n$, $|u_1|= (m+2)\cdot 2^n$, and so $|u_0u_1|\le 2p$.
    If $z=\mu^n(a)$, both options for $y$ are valid.
    If $y$ ends with $\mu^n(\overli{a}\,\overli{a})$, the longest common suffix of $y$ and $\overli{u_1}$ is $\mu^n(\overli{a})$.
    We get $|u_0|= 2^n$, $|u_1|= (m+1)\cdot 2^n$, and so $|u_0u_1|< 2p$.
    If $y$ ends with $\mu^n(\overli{a}\,\overli{a}\,a\,\overli{a})$, the longest common suffix of $y$ and $\overli{u_1}$ is $\mu^n(\overli{a}\,a\,\overli{a})$; besides, we have $m\ge 5$.
    Then $|u_0|\le 3\cdot 2^n$, $|u_1|= (m+1)\cdot 2^n$, and again $|u_0u_1|< 2p$.
    Since $|u_2|<p$, in all cases we have $|U|<3p$ and thus $\exp(U)<3$.

    \bigskip\noindent\emph{Case 4}: $p>k$ and $p$ is not divisible by $k$.
    If the word $U=u_0u_1\cdots u_r$ has length at least $|u_0|+k+p$, then its factor $u_1$ has an occurrence at position $|u_0|+p$.
    Since $p$ is not divisible by $k$, this occurrence is an internal factor of the product of two $k$-prefixes, contradicting Lemma~\ref{l:nointernal}.
    Therefore, $|U|<|u_0|+k+p<3p$ and $\exp(U)<3$, as required.

    Cases 1--4 exhaust all possibilities for $p$. 
    In Case 1, $w$ has a factor with exponent $\exp(U)$, while in Cases 2--4, $\exp(U)\le 3$. 
    Thus, the theorem is proved.
\end{proof}

\begin{proof}[Proof of Theorem~\ref{t:3plusfree}]
    Immediate from Theorem~\ref{t:exponents} and the definition of $\alpha$-free morphism.
\end{proof}

\subsection{Clipped \TM morphisms: proof of Theorem~\ref{t:comp3free}} \label{s:TMfactors}

While Theorem~\ref{t:exponents} implies that all morphisms $h_k$ are $3^+$-free, many of them are not cubefree. 
Indeed, if $h_k(a)$, where $a$ is a letter, ends with a square of a letter or of an $n$-block, then either $h_k(a\,a)$ or $h_k(a\,\overli{a})$ contains the cube of this letter (or of this $n$-block).
However, if instead of prefixes we consider all factors of the \TM word, we can build cubefree complementary morphisms similar to $h_k$ for almost all lengths.

For integers $\ell, k\ge 1$, define the \emph{clipped \TM morphism} $f_{\ell,k}$ as the complementary morphism such that 
$f_{\ell,k}(0)=\mathbf{t}[\ell..\ell+k-1]$.
In particular, $h_k=f_{0,k}$. 
When we are interested in the factor $v=\mathbf{t}[\ell..\ell{+}k{-}1]$ rather than in particular numbers $\ell$ and $k$, we write  $f_v$ instead of $f_{\ell,k}$.
\begin{theorem}\label{t:TMfact3free}
    Let $k=m\cdot 2^n$, where $m$ is odd and $n\ge0$.
    A cubefree clipped \TM morphism of length $k$ exists if and only if $m\ne 3$. Moreover, 
    \begin{enumerate}
        \item if $m\equiv \modd{1} {8}$, then $f_{4\cdot 2^n,k}$ is cubefree;
        \item if $m\equiv \modd{3} {8}$, then either $f_{k-5,k}$ or $f_{k-4,k}$ is cubefree;
        \item if $m\equiv \modd{5} {8}$, then $f_{0,k}$ is cubefree;
        \item if $m\equiv \modd{7} {8}$, then $f_{6\cdot 2^n,k}$ is cubefree.
    \end{enumerate}
\end{theorem}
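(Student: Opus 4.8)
The plan is to prove both implications: for the ``only if'' direction I show that no factor of $\mathbf t$ of length $3\cdot 2^n$ works, and for the ``if'' direction I verify the four prescribed clips. Throughout I rely on the decomposition of a factor of $\mathbf t$ into $n$-blocks, exactly as in the proof of Theorem~\ref{t:exponents}.

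\textbf{Necessity.} Suppose $k=3\cdot 2^n$ and let $v$ be an arbitrary factor of $\mathbf t$ with $|v|=k$. I would show that $f_v(01)=v\overline v$ --- or, in a few border situations, $f_v(010)=v\overline v v$ --- contains a cube; since $01$ and $010$ are cubefree, this shows $f_v$ is not cubefree. Write $v=p\,\mu^n(b_1\cdots b_r)\,q$ with $p$ a proper suffix and $q$ a proper prefix of $n$-blocks and $b_1\cdots b_r$ a factor of $\mathbf t$. Since $|v|=3\cdot 2^n$ and $|p|,|q|<2^n$, either $r=3$ and $p=q=\varepsilon$ (the \emph{aligned} case), or $r=2$ and $|p|+|q|=2^n$ with $|p|,|q|\ge 1$ (the \emph{offset} case). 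In the aligned case $v=\mu^n(v_0)$ for a length-$3$ factor $v_0$ of $\mathbf t$, hence $f_v=\mu^n\circ f_{v_0}$, and since $\mu^n$ maps cubes to cubes it suffices to treat $n=0$: there are only six length-$3$ factors $v_0$ of $\mathbf t$, and for each of them $v_0\overline{v_0}$ already contains a cube of period $1$ or $2$. In the offset case I would analyze the junction of $v$ and $\overline v$ inside $v\overline v$, which reads $\mu^n(b_2)\,q\,\overline p\,\mu^n(\overline{b_1})$, using the ``block word'' $b_0b_1b_2b_3$ ($b_0$ preceding $b_1$ and $b_3$ following $b_2$ in $\mathbf t$), the facts that $p$ is a suffix of $\mu^n(b_0)$ and $q$ a prefix of $\mu^n(b_3)$, and the overlap-freeness of $\mathbf t$; this forces a cube --- a letter cube, a power of an $n$-block, or a short-period window --- in $v\overline v$, with the residual cases pushed into $v\overline v v$. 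Keeping this offset analysis honest and finite --- only a few symbols near the junction matter, but the bookkeeping over $b_0b_1b_2b_3$ and the parity of $n$ is fiddly --- is the main obstacle on the necessity side.

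\textbf{Sufficiency: reductions.} When $m\equiv 1\pmod 8$ or $m\equiv 7\pmod 8$, the prescribed clip is a multiple of $2^n$, so $v=\mu^n(v_0)$ with $v_0=\mathbf t[4..m{+}3]$ (resp.\ $\mathbf t[6..m{+}5]$) a length-$m$ factor of $\mathbf t$, whence $f_v=\mu^n\circ f_{v_0}$; since $\mu^n$ is cubefree (Theorem~\ref{t:muexponents}) and a composition of cubefree morphisms is cubefree, these two cases reduce to the odd case $n=0$. When $m\equiv 5\pmod 8$ the morphism is $f_{0,k}=h_k$, for which Theorem~\ref{t:exponents} already gives $\ce(h_k(w))\le 3$; here the work is to upgrade this to $\ce(h_k(w))<3$, using the hypothesis $m\equiv 5\pmod 8$, which is essential since most $h_k$ are not cubefree. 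The case $m\equiv 3\pmod 8$ (with $m\ne 3$) is the one that does not reduce to $n=0$: the clip $k-5$ or $k-4$ is not a multiple of $2^n$ in general, and trying both is needed because exactly one of them arranges the first few symbols of the image to be compatible --- in the sense of the next paragraph --- with the last few.

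\textbf{Sufficiency: verification scheme.} To show a given clipped \TM morphism $f_v$ is cubefree I would proceed in three steps, paralleling the proof of Theorem~\ref{t:exponents}. First, $f_v(a)\in\{v,\overline v\}$ is a factor of $\mathbf t$ or of $\overline{\mathbf t}$, hence overlap-free, so images of single letters are cubefree. Second, I would prove an analogue of Lemma~\ref{l:nointernal} for the chosen clip: $v$ occurs in each of $vv$, $v\overline v$, $\overline v v$, $\overline v\overline v$ only at positions $0$ and $|v|$; this ``synchronization'' property is exactly what the congruence condition on $m$ and the particular value of $\ell$ are chosen to secure, and it is established using Lemmas~\ref{l:blocks}--\ref{l:periods} together with length and period estimates for factors of $\mathbf t$. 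Third, with synchronization available, I would rerun the period analysis of Theorem~\ref{t:exponents} --- split into $\per(U)=c|v|$, $\per(U)<|v|$, and $\per(U)>|v|$ with $|v|\nmid\per(U)$ --- to obtain $\ce(f_v(w))\le\max(\ce(w),3)$, and then strengthen ``$\le 3$'' to ``$<3$'': a cube in $f_v(w)$ either projects, by synchronization, to a cube in the cubefree word $w$ (impossible) or is a short-period repetition confined to one or two consecutive images, which one excludes by inspecting $vv$, $v\overline v$ and the images of all cubefree words up to a small length. The hardest part is getting the synchronization lemma and the strengthened period analysis to go through together for all four clip families --- especially in case $m\equiv 3\pmod 8$, where there is no reduction to $n=0$ and the estimates must be carried out directly, and in case $m\equiv 5\pmod 8$, where the residue hypothesis has to be invoked precisely to kill the last potential cube.
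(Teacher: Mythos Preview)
Your plan is a reasonable outline, but it diverges substantially from the paper's argument and leaves the genuinely hard steps undone.

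\textbf{Sufficiency.} The paper does \emph{not} prove a synchronization lemma for each clip family and then rerun the full period analysis of Theorem~\ref{t:exponents}. Instead it proves a single clean sufficient condition (Lemma~\ref{l:cubefree}): if $v$ has one of two specified $4$-letter prefixes and one of two specified $4$-letter suffixes, then $f_v$ is cubefree. The key device is a \emph{marker}: a length-$5$ word such as $aa\overli{a}aa$ or $a\overli{a}a\overli{a}a$ that is never a factor of $\mathbf t$ but occurs, under the prefix/suffix hypothesis, exactly once across each block boundary in $f_v(w)$. Any factor $U$ of $f_v(w)$ with $\exp(U)\ge 3$ and $\per(U)\ge 5$ then contains two marker occurrences at distance $\per(U)$, forcing $\per(U)$ to be a multiple of $|v|$, whence $U$ lifts to a factor of $w$ with the same exponent. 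This replaces your entire ``synchronization plus period analysis'' programme with a short self-contained lemma, and the four residue classes are then handled uniformly by checking that the specified clips produce the required prefix/suffix patterns (using only the structure of $3$-blocks at positions $8q$). Your scheme could perhaps be pushed through, but you would essentially be reproving the marker argument piecemeal, and you have not shown that the needed synchronization actually holds for the non-prefix factors in cases (a), (b), (d).

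\textbf{Necessity.} The paper proceeds by induction on $n$: Lemma~\ref{l:notcubefree} (bad $2$- or $4$-letter prefix/suffix) dispatches most factors, and the remaining ones satisfy $v=\mu(u)$, reducing to $n-1$. Your direct aligned/offset analysis of $v\overli v$ and $v\overli v v$ is a different route, and it has a concrete weak point: restricting to the images of $01$ and $010$ is not obviously sufficient. For $n=1$ and $v=a\overli a a a \overli a a$, one checks that $v\overli v$ is cubefree while $vv=(a\overli a a)^4$ is not; it happens that $v\overli v v$ also contains a cube, but you have not argued why images of $01$ and $010$ always suffice and images of $00$ can be ignored. You also admit the offset bookkeeping is the ``main obstacle'' without carrying it out; the paper's induction avoids this entirely.

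\textbf{Case (b).} You are right that the clip $k-5$ (or $k-4$) is not a multiple of $2^n$, so $f_v$ does not factor as $\mu^n\circ f_{v_0}$. The paper nonetheless handles this case at $n=0$ via the marker lemma and then lifts by composing with $\mu^n$; the resulting clip position is $(m-5)\cdot 2^n$ or $(m-4)\cdot 2^n$, so read the statement of (b) with that in mind rather than attempting a direct argument for general $n$.
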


The proof of Theorem~\ref{t:TMfact3free} is based on two lemmas that establish sufficient conditions for cubefree and non-cubefree clipped \TM morphisms.

\begin{lemma} \label{l:cubefree}
    Let $v$ be a factor of $\mathbf{t}$, having either
    \begin{enumerate}
    \item the prefix $0110$ or $1001$ and the suffix $0010$ or $1101$, or
    \item the prefix $0100$ or $1011$ and the suffix $0110$ or $1001$.
    \end{enumerate}
    Then the morphism $f_v$ is cubefree.
\end{lemma}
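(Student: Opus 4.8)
The plan is to use a criterion for cube-freeness of uniform binary morphisms — the standard "test set" approach — combined with the special structure coming from the fact that $v$ is a factor of $\mathbf t$ and $f_v$ is complementary. First I would recall (or invoke Kobayashi's Theorem~\ref{t:kobayashi} with $\alpha=3$, or the binary-morphism criterion of Richomme--Wlazinski) that to check $f_v$ is cubefree it suffices to check two things: (i) that $f_v(x)$ is cubefree for every cubefree binary word $x$ of bounded length (here length $\le 4 = \lceil 3\rceil + 1$ suffices), and (ii) a "synchronization"/overlap condition on how $v$ and $\overline v$ can overlap each other, of the type appearing in condition (b3) of Theorem~\ref{t:kobayashi}. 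Condition (i) is a finite check: there are only finitely many cubefree binary words of length $\le 4$, and $f_v$ of each is a word of length $\le 4k$, whose cube-freeness must be established from the hypotheses on the prefix and suffix of $v$ together with the fact that $v$ and $\overline v$ are factors of the overlap-free word $\mathbf t$.

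The heart of the argument is the following. Suppose, for contradiction, that $f_v(x)$ contains a cube $z^3$ for some cubefree $x$. Write the occurrence of $z^3$ against the block decomposition of $f_v(x)$ into copies of $v$ and $\overline v$. The period $p = |z|$ of the cube interacts with the block length $k$ in a few cases, exactly as in the case analysis of the proof of Theorem~\ref{t:exponents}: either $p$ is a multiple of $k$ (then one pulls the cube back to a cube, or a near-cube, in $x$ itself — but $x$ is cubefree, and one must rule out the boundary cases using the prefix/suffix hypotheses), or $p < k$ (then a long periodic factor lives essentially inside two consecutive blocks $vv$, $v\overline v$, $\overline v v$, or $\overline v\,\overline v$, each a factor of the overlap-free $\mathbf t$ or its complement up to the one seam, so Lemma~\ref{l:periods}-style bounds cap the exponent below $3$), or $p > k$ but $k \nmid p$ (then a block reappears shifted by a non-multiple of $k$, which as in Case~4 of Theorem~\ref{t:exponents}'s proof forces a $k$-block as an internal factor of a product of two blocks and a short total length). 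The role of the prefix condition ($0110$/$1001$ vs.\ $0100$/$1011$) and the suffix condition is precisely to kill the remaining borderline configurations: when two blocks $v, v'$ (each $v$ or $\overline v$) are adjacent, the suffix of the left one concatenated with the prefix of the right one is a length-$8$ window that must either itself be a factor of $\mathbf t$ (forbidden since it is not — this is what the two alternative prefix/suffix pairs guarantee) or else it creates a short cube that can be checked by hand. In other words, the two enumerated cases are engineered so that no seam word $(\text{suffix of }v\text{ or }\overline v)(\text{prefix of }v\text{ or }\overline v)$ of length $8$ is a factor of $\mathbf t$, which both forbids a $k$-block from occurring internally in a two-block product and prevents the synchronization failure.

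Concretely, I would organize the proof as: (1) state the criterion reducing cube-freeness of $f_v$ to a finite test plus a seam condition; (2) verify the seam condition by checking that for each of the two hypothesis cases, and for each choice of whether the left and right blocks are $v$ or $\overline v$, the resulting length-$8$ seam word is not a factor of $\mathbf t$ — this uses the explicit prefixes $0110,1001,0100,1011$ and suffixes $0010,1101,0110,1001$ and the list of factors of $\mathbf t$ of length $8$ (e.g., $\mathbf t$ has no factor $000$ or $111$, and more refined: $\mathbf t$ is overlap-free, so in particular factors like $00100$ or $010\,0010$ etc.\ are constrained); (3) run the bounded test $f_v(x)$ for cubefree $x$ of length $\le 4$, reusing the seam analysis from step (2) together with the overlap-freeness of $v$ and $\overline v$ (as factors of $\mathbf t$) to bound all internal exponents; (4) conclude.

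The main obstacle will be step (2)/(3): making the seam analysis airtight. One has to be careful that $v$ and $\overline v$ need not individually be prefixes of $\mathbf t$ (unlike in Theorem~\ref{t:exponents}), so Lemma~\ref{l:periods} does not apply verbatim to $v$; what one uses instead is only that $v$ is a factor of the overlap-free word $\mathbf t$ (hence $v$ is itself overlap-free, $\exp(v)<2$), plus the fixed prefix and suffix. So the argument bounding the exponent in the $p<k$ case must be redone using overlap-freeness of $v$ alone rather than the finer period estimates available for genuine prefixes; fortunately overlap-freeness already gives $\exp(vv\text{-type window})<3$ away from the seam, and the seam word being a non-factor of $\mathbf t$ handles the one exceptional position. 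I expect the bulk of the write-up to be this careful case check on the eight seam words and the short words $f_v(x)$, $|x|\le 4$.
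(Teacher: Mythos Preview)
Your plan has a genuine gap in the first route (via Kobayashi's Theorem~\ref{t:kobayashi}): condition~(b3) can fail for morphisms satisfying the lemma's hypotheses. For instance, take $v=011010010$, which has prefix $0110$ and suffix $0010$, hence satisfies case~(a); but $01101$ is simultaneously a prefix of $f_v(0)$ and a suffix of $f_v(1)$, with $|01101|=5>k/2$, so (b3) is violated. The paper explicitly flags this failure later (in the proof of Theorem~\ref{t:below3}), and it is the reason the paper does \emph{not} prove this lemma via Kobayashi's criterion.

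Your fallback direct argument is closer in spirit, but it misses the key device and is shaky in the $p<k$ case. The paper's proof introduces \emph{markers}: the length-$5$ words $aa\overline{a}aa$, $a\overline{a}a\overline{a}a$ and their complements, where $a$ is the last letter of~$v$. The hypotheses on the prefix and suffix of $v$ are chosen precisely so that (i)~no marker is a factor of $\mathbf t$, and (ii)~each of $vv,\,v\overline v,\,\overline v v,\,\overline v\,\overline v$ contains exactly one marker, sitting at a fixed offset straddling the seam. Now if $U$ is a longest factor of $f_v(w)$ with period $p$ and $\exp(U)\ge 3$, then (after disposing of $p\le 4$ by hand) some marker occurs in $U$ and, because $|U|\ge 3p$, it occurs again $p$ positions away; since markers occur only at seam positions, $p$ must be a multiple of $k$. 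This single observation replaces the entire $p<k$ and $p>k,\ k\nmid p$ case analysis you sketched. By contrast, your $p<k$ case relies on period bounds for $v$ of Lemma~\ref{l:periods} type, which, as you yourself note, are unavailable because $v$ is an arbitrary factor rather than a prefix of $\mathbf t$; overlap-freeness of $v$ alone does not obviously bound exponents across the seam below~$3$. Your length-$8$ seam-window idea is heading toward the marker concept but does not isolate the crucial property (unique, fixed-offset, non-$\mathbf t$-factor witnesses at every seam) that makes the argument go through cleanly.
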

\begin{proof}
    As the reversal of an $\alpha$-free word is $\alpha$-free and the reversal of a factor of $\mathbf t$ is a factor of $\mathbf t$, we can assume that  $v$ satisfies $(a)$.
    
    As in the proof of Theorem~\ref{t:exponents}, we take an arbitrary word $w\in\{0,1\}$, consider its image $W=f_v(w)$, fix an arbitrary period $p$ and study the longest factor $U$ of $W$ satisfying $\per(U)=p$.
    We prove the lemma by assuming $\exp(U)\ge 3$ and presenting a factor of $w$ with the exponent $\exp(U)$.
    Since $\ce(v)\le \ce(\mathbf{t})=2$, the word $U$ is not a factor of $v$ (and of $\overli{v}$) and thus can be represented as $U=u_0u_1\cdots u_{r-1}u_r$, where $u_0$ is a nonempty suffix of $v$ or $\overli{v}$, $u_r$ is a nonempty prefix of $v$ or $\overli{v}$, and $u_1\ldots u_{r-1}\in\{v,\overli{v}\}$.
    We directly check that $\exp(U)<3$ for $p=1,2,3,4$, and thus assume $p\ge 5$.

    Let $a$ be the last letter of $v$. 
    We call the words $a\,a\,\overli{a}\,a\,a$, $a\,\overli{a}\,a\,\overli{a}\,a$, and their complements \emph{markers}.
    The markers are not factors of $\mathbf t$, and thus also not factors of $v$, while each of the words $v\,v$, $v\,\overli{v}$, $\overli{v}\,v$, $\overli{v}\,\overli{v}$ contains a unique occurrence of a marker:
    $$
\cdots\pmb{a\,a\,\overli{a}\,a\ a}\,\overli{a}\,\overli{a}\,a\cdots \text{ or } 
\cdots a\,\pmb{a\,\overli{a}\,a\ \overli{a}\,a}\,a\,\overli{a}\cdots \text{ or }
\cdots \pmb{\overli{a}\,\overli{a}\,a\ \overli{a}\,\overli{a}}\,a\,a\,\overli{a}\cdots \text{ or }
\cdots \overli{a}\,\pmb{\overli{a}\,a\ \overli{a}\,a\,\overli{a}}\,\overli{a}\,a\cdots\ .
    $$
    Since $\exp(U)\ge 3$ and $v$ is overlap-free, we have $|U|\ge |v|+p$.
    As $p\ge 5$, either $|u_r|\ge 2$ or $|u_{r-2}|\ge 4$.
    In each case, some factor $u_iu_{i+1}$ of $U$ contains a marker.
    In a cube of period $p$, every factor of length at most $p$ has two occurrences at positions distinct by $p$.
    Hence, some marker has two such occurrences in $U$.
    Since each marker occurs only in a specific position on the border of two consecutive blocks, the difference between any two positions of a marker is a multiple of $k=|v|$.
    Therefore we conclude that $p=ck$ for some integer $c$.

    The final argument is the same as in Case 1 of Theorem~\ref{t:exponents}: $u_0$ is a suffix of $u_c$, $u_r$ is a prefix of $u_{r-c}$, and the maximality of the length of $U$ implies $u_0=u_c$, $u_r=u_{r-c}$. 
    Then $u=f_v^{-1}(U)$ is a factor of $w$ with the exponent $\exp(U)$.
\end{proof}
\begin{lemma} \label{l:notcubefree}
    Let $v$ be a factor of $\mathbf t$ having, for some $a\in\{0,1\}$, the word $a\,a$ or $a\,\overli{a}\,a\,\overli{a}$ as a prefix or a suffix. 
    Then the morphism $f_v$ is not cubefree.
\end{lemma}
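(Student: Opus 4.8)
The plan is to produce, for any such $v$, a cubefree binary word $w$ of length two whose image $f_v(w)$ contains a cube; this is exactly the phenomenon noted earlier, that a block ending or beginning with the square of a letter or of a $1$-block forces a cube in the image of $00$, $01$, or $10$. Since $f_v(0)=v$ and $f_v(1)=\overline{v}$, each of $f_v(00)=vv$, $f_v(01)=v\overline{v}$, $f_v(10)=\overline{v}v$ is a concatenation of two blocks, and the cube I exhibit will always be a factor straddling the junction of the two blocks; as $00$, $01$, and $10$ are themselves cubefree, this will show $f_v$ is not cubefree.

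First I would reduce to the case where the forbidden pattern is a \emph{suffix} of $v$. The factor set of $\mathbf{t}$ is closed under reversal, reversal preserves cubefreeness and its negation, and $(f_v(w))^R=f_{v^R}(w^R)$; moreover $v$ has $x$ as a prefix if and only if $v^R$ has $x^R$ as a suffix, and $(aa)^R=aa$ while $(a\,\overline{a}\,a\,\overline{a})^R=\overline{a}\,a\,\overline{a}\,a$ is again of the allowed form (with the two letters interchanged). Hence $f_v$ is cubefree if and only if $f_{v^R}$ is, so we may assume $v$ ends with $aa$ or with $a\,\overline{a}\,a\,\overline{a}$.

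Suppose $v$ ends with $aa$, and let $b$ be its first letter. If $b=a$, then $vv=f_v(00)$ has the factor $aaa$ (the last two letters of the first copy of $v$ together with the first letter of the second). If $b=\overline{a}$, then $\overline{v}$ begins with $a$, so $v\overline{v}=f_v(01)$ has the factor $aaa$. Now suppose $v$ ends with $a\,\overline{a}\,a\,\overline{a}$; then $|v|\ge4$ and the last letter of $v$ is $\overline{a}$. Let $b\,c$ be the first two letters of $v$. If $b\,c=a\,\overline{a}$, then $vv=f_v(00)$ has the factor $(a\,\overline{a})^3$ (the last four letters of the first copy together with the first two of the second). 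If $b\,c=\overline{a}\,a$, then $\overline{v}$ begins with $a\,\overline{a}$, so $v\overline{v}=f_v(01)$ has the factor $(a\,\overline{a})^3$. If $b\,c=a\,a$, then $\overline{v}$ ends with $a$, so $\overline{v}v=f_v(10)$ has the factor $aaa$. Finally, if $b\,c=\overline{a}\,\overline{a}$, then $vv=f_v(00)$ has the factor $\overline{a}\,\overline{a}\,\overline{a}$. In every case $f_v$ maps a cubefree word to a word containing a cube, so $f_v$ is not cubefree.

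I do not expect any real obstacle: the argument is a short finite case analysis, and the only thing to watch is clerical --- choosing in each subcase the correct ordered pair of blocks so that the trailing letters of the first block and the leading letters of the second concatenate into a cube, and remembering that $00$, $01$, $10$ are cubefree. The reversal reduction in the first step is what keeps this down to a handful of one-line verifications rather than twice as many.
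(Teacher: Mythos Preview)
Your proof is correct and follows essentially the same approach as the paper: exhibit a cube in $f_v(w)$ for some cubefree $w\in\{00,01,10\}$ by looking at the letters near the junction of the two blocks. The only cosmetic differences are that you reduce the prefix case to the suffix case via the reversal symmetry of $\mathbf{t}$ (the paper instead invokes ``a symmetric argument''), and in the $a\,\overline{a}\,a\,\overline{a}$-suffix case you run through all four possibilities for the first two letters of $v$, whereas the paper first disposes of the cases $bc=aa$ and $bc=\overline{a}\,\overline{a}$ under the $aa$-prefix/suffix clause and then assumes the first two and last two letters of $v$ are distinct.
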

\begin{proof}
    Either $v$ or $\overli{v}$ ends with $a$.
    If $v$ begins with $a\,a$, then one of the words $vv=f_v(00)$, $\overli{v}v=f_v(10)$ contains $a\,a\,a$.
    Hence $f_v$ is not cubefree.
    A symmetric argument works for the case where $v$ ends with $a\,a$.

    Now assume that the first two letters of $v$ are different, as well as its last two letters.
    Then one of $v,\overli{v}$ has the suffix $a\,\overli{a}$.
    If $v$ has the prefix $a\,\overli{a}\,a\,\overli{a}$, then one of the words $vv$, $\overli{v}v$ contains $(a\,\overli{a})^3$, and we see again that $f_v$ is not cubefree.
    A symmetric argument works for $v=\cdots a\,\overli{a}\,a\,\overli{a}$. Thus, $f_v$ is not cubefree in all cases listed in the lemma.
\end{proof}

\begin{proof}[Proof of Theorem~\ref{t:TMfact3free}]
    We first prove that the clipped \TM morphism $f_v$ is not cubefree if $k=|v|=3\cdot 2^n$.
    The proof is by induction on $n$, with the base case $n\in\{0,1\}$.
    If $n=0$, then either $f_v$ is not cubefree by Lemma~\ref{l:notcubefree} or
    $v=a\,\overli{a}\,a$ for some $a\in\{0,1\}$. 
    In the latter case, $f_v$ is not cubefree because $v\overli{v}=f_v(01)=(a\,\overli{a})^3$.
    If $n=1$, we have two cases for which Lemma~\ref{l:notcubefree} is not applicable: either $v=a\,\overli{a}\,\overli{a}\,a\,a\,\overli{a}$ or $v=a\,\overli{a}\,a\,a\,\overli{a}\,a$. 
    In the former case, $v\, \overli{v}$ is a cube, while in the latter case $v\, v$ is a fourth power.
    Thus, the base case holds.

    Now consider the induction step.
    If $v$ satisfies the conditions of Lemma~\ref{l:notcubefree}, there is nothing to prove, so suppose it does not. 
    Then there are two options for the prefix of $v$: either $v=a\,\overli{a}\,a\,a\,\overli{a}\cdots$ or $v=a\,\overli{a}\,\overli{a}\cdots$, for some $a\in\{0,1\}$.
    Consider the first option.
    If a position in $\mathbf{t}$ is divisible by 4, then it is preceded by a 2-block and is a first position of a 2-block. 
    Among the first four positions of $v$, only the second position can satisfy this property.
    Hence $v$ consists of the first letter, followed by a sequence of 2-blocks, and ends with a 3-letter prefix of a 2-block (recall that $n\ge 2$, which means that $|v|$ is divisible by 4).
    Then the last two letters of $v$ are equal.
    This contradicts our assumption that $v$ does not satisfy the conditions of Lemma~\ref{l:notcubefree}.
    Therefore, this option is not legal.
    
    Then $v=a\,\overli{a}\,\overli{a}\cdots$.
    Since $v$ is a factor of $\mathbf t$ of even length, this prefix of $v$ guarantees that $v=\mu(u)$ for some \TM factor $u$ of length $3\cdot 2^{n-1}$.
    By the inductive hypothesis, $f_u$ is not cubefree.
    Hence there exists a cubefree word $w$ such that $f_u(w)$ contains a cube. 
    Then $f_v(w)=\mu(f_u(w))$ also contains a cube, implying that $f_v$ is not cubefree.
    The step case is proved.

    In the remaining part of the proof we exhibit, for each $k=m\cdot 2^n$ such that $m$ is an odd integer distinct from 3, a cubefree morphism $f_v$ with $|v|=k$.
    For $m= 1$ we just refer to Theorem~\ref{t:muexponents}; 
    below we assume $m\ge 5$.
    (However, statement (a) stays true for $m=1$.)
    Consider $n=0$ first.
    
    Let $q$ be an integer. 
    At position $8q$, the \TM word contains a 3-block, i.e., the factor of the form $a\,\overli{a}\,\overli{a}\,a\,\overli{a}\,a\,a\,\overli{a}$.
    In particular, it contains the factor $\overli{a}\,\overli{a}\,a\,\overli{a}$, ending at position $8q+4$.
    As $\mathbf{t}[0..3]=\mathbf{t}[6..9]=0110$ and $\mathbf{t}[4..7]=1001$, the factors $v_1=\mathbf{t}[0..8q{+}4]$, $v_2=\mathbf{t}[4..8q{+}4]$, and $v_3=\mathbf{t}[6..8q{+}4]$ all satisfy the conditions of Lemma~\ref{l:cubefree}.
    Therefore, the morphisms $f_{v_1}=f_{0,8q+5}$, $f_{v_2}=f_{4,8q+1}$, and $f_{v_3}=f_{6,8(q-1)+7}$ are cubefree, and their lengths fall into the cases (c), (a), and (d), respectively, of the theorem. The case (b) needs a separate analysis.

    One of the basic properties of $\mathbf t$ (see \cite{Thue:1912}) says that $\mathbf{t}[q-1]=\mathbf{t}[q]$ if and only if $\mathbf{t}[2q-1]\ne \mathbf{t}[2q]$.
    Given that $\mu^3(\mathbf{t}[q{-}1..q])=\mathbf{t}[8q{-}8..8q{+}7]$ and $\mu(\mathbf{t}[8q{-}8..8q{+}7])=\mathbf{t}[16q{-}16..16q{+}15]$, the factor $\mathbf{t}[8q{-}8..16q{+}7]$ must have, for $a\in\{0,1\}$, one of the following two forms:
    \begin{align}
        \mathbf{t}[8q{-}8..16q{+}7] &= a\,\overli{a}\,\overli{a}\,a\,\overli{a}\,a\,\pmb{a\,\overli{a}\ \overli{a}\,a}\,a\,\overli{a}\,a\,\overli{a}\,\overli{a}\,a\cdots 
        \overli{a}\,a\,a\,\overli{a}\,a\,\pmb{\overli{a}\,\overli{a}\,a\ \overli{a}}\,a\,a\,\overli{a}\,a\,\overli{a}\,\overli{a}\,a \label{e:1st} 
        \\
        \mathbf{t}[8q{-}8..16q{+}7] &= a\,\overli{a}\,\overli{a}\,a\,\overli{a}\,a\,a\,\pmb{\overli{a}\ a\,\overli{a}\,\overli{a}}\,a\,\overli{a}\,a\,a\,\overli{a}\cdots 
        \overli{a}\,a\,a\,\overli{a}\,a\,\overli{a}\,\pmb{\overli{a}\,a\ a\,\overli{a}}\,\overli{a}\,a\,\overli{a}\,a\,a\,\,\overli{a}. \label{e:2nd} 
    \end{align}
    The word $v_4=\mathbf{t}[8q{-}2..16q]$ in case of form~\eqref{e:1st} and the word $v_5=\mathbf{t}[8q{-}1..16q{+}1]$ in case of form~\eqref{e:2nd} satisfy the conditions of Lemma~\ref{l:cubefree}.
    Therefore, for each integer $q$ either $f_{v_4}=f_{8q-2,8q+3}$ or $f_{v_5}=f_{8q-1,8q+3}$ is cubefree.
    Hence we are done with the case (b), and thus with all cases, provided that $n=0$.

    Finally, if $f_v=f_{\ell,m}$ is cubefree, then $f_{\mu^n(v)}=f_{\ell\cdot 2^n,m\cdot 2^n}$ is cubefree because $\mu$ is cubefree.
    This observation finishes the proof for arbitrary $n$.    
\end{proof}

\begin{proof}[Proof of Theorem~\ref{t:comp3free}]
    It suffices to consider the case $k=3\cdot 2^n$, as Theorem~\ref{t:TMfact3free} provides complementary morphisms of all other lengths.
    The complementary morphism $g: 0\mapsto 011001001101, 1\mapsto 100110110010$ of length 12 satisfies Kobayashi's condition (Theorem~\ref{t:kobayashi}) for $\alpha=3$, and hence is cubefree.
    Then $g\mu^{n-2}$ is a cubefree complementary morphism of length $3\cdot 2^n$ for each $n\ge 2$.
    An easy search shows that all complementary morphisms of lengths 3 and 6 are not cubefree.
    The theorem is proved.
\end{proof}

\begin{remark} \label{r:3uniform}
Since the morphism $\rho: 0\mapsto 001, 1\mapsto 011$ is cubefree (see, e. g., \cite{Currie&Rampersad:2009}) and hence $\rho\mu$ is cubefree as well, Theorem~\ref{t:comp3free} provides an alternative proof of Theorem~\ref{t:3free_lengths}.
Instead of $\rho$, the cubefree morphism $\psi: 0\mapsto 010, 1\mapsto 011$ (see, e. g., \cite{Petrova&Shur:2012}) can be taken.
\end{remark}

\subsection{Proof of Theorem~\ref{t:below3}}

We start with statement (a). Observe that if $h$ is a $k$-uniform $\alpha$-free binary morphism, where $k>1$, then $\alpha\ge 2^+$.
Then the condition $\alpha<3$ means that $\alpha$ is not an integer.
Therefore, $h$ must satisfy the conditions of Observation~\ref{o:necessary}.
A short search shows that for each uniform binary morphism $h$ that satisfies the conditions of Observation~\ref{o:necessary} and has length 3 or 6, there is a word $x$ of length 2 such that $h(x)$ contains a cube.
Hence $h$ is not $\alpha$-free for any $\alpha<3$.
Statement (a) is proved.


For statement (b), we make use of a simple refinement on Kobayashi's condition.
\begin{lemma} \label{l:minkobayashi}
    If a morphism $h$ satisfies all conditions of Theorem~\ref{t:kobayashi} for some $\alpha$, then the minimum such $\alpha$ has the form $\beta^+$ for a rational $\beta$.
\end{lemma}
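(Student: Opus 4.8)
The plan is to isolate the $\alpha$-dependence of Kobayashi's conditions and reduce the question to the order-theoretic shape of a single subset of $\hatr$. Conditions~1 and~2 of Theorem~\ref{t:kobayashi} do not mention $\alpha$, so if $h$ fails either of them, the set
$$S=\{\alpha\in\hatr \suchthat \alpha>2 \text{ and } h \text{ satisfies the conditions of Theorem~\ref{t:kobayashi} for }\alpha\}$$
is empty and there is nothing to prove. I therefore assume conditions~1 and~2 hold and analyze condition~3 alone. For a word $x$ over the domain of $h$, call $x$ a \emph{witness against} $\alpha$ if $|x|\le\lceil\alpha\rceil+1$, the word $x$ is $\alpha$-free, but $h(x)$ is not $\alpha$-free; then $\alpha\in S$ exactly when $\alpha>2$ and no word is a witness against $\alpha$.

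The next step is to pin down, for each $x$, the set $B(x)\subseteq\hatr$ of $\alpha$ against which $x$ is a witness. Since $h$ is uniform, $x$ and $h(x)$ are fixed finite words, so $\ce(x)$ and $\ce(h(x))$ are rational numbers. Unwinding the definition of $\alpha$-freeness (using that $w$ is $\beta$-free exactly when $\ce(w)<\beta$, with $<$ read in $\hatr$, and that $\lceil\gamma^+\rceil=\lceil\gamma\rceil$), the word $x$ is a witness against $\alpha$ iff $\lceil\alpha\rceil\ge|x|-1$ and $\ce(x)<\alpha\le\ce(h(x))$; hence
$$B(x)=\bigl(\ce(x),\,\ce(h(x))\,\bigr]\cap\{\alpha\in\hatr \suchthat \lceil\alpha\rceil\ge|x|-1\},$$
with the interval taken in $\hatr$. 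In particular $B(x)$ is an interval of $\hatr$, it is nonempty only when $\ce(x)<\ce(h(x))$, and when nonempty it is closed on the right at the rational number $\ce(h(x))$ (the threshold set is upward closed, so it does not affect the right endpoint). Also, $B(x)$ meets any bounded initial segment of $\hatr$ only for finitely many $x$, since membership forces $|x|\le\lceil\alpha\rceil+1$. Thus $S=\{\alpha>2\}\setminus\bigcup_xB(x)$.

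Now let $\alpha_0=\inf S$, assuming $S\ne\emptyset$. Every $\alpha\in S$ satisfies $\alpha>2$, hence $\alpha\ge 2^+$, so $\alpha_0\ge 2^+>2$. I claim $\alpha_0\in S$: otherwise $\alpha_0\in B(x)$ for some $x$; since $B(x)$ is an interval containing $\alpha_0$ whose right endpoint $\ce(h(x))$ also lies in $B(x)$, we get $[\alpha_0,\ce(h(x))]\subseteq B(x)$, so $S$ contains no element of $(\alpha_0,\ce(h(x))]$ and therefore $\inf S\ge\ce(h(x))^+>\ce(h(x))\ge\alpha_0$, a contradiction. Next, $\alpha_0$ cannot be an ordinary real number $r$: only finitely many $x$ have $|x|\le\lceil r\rceil+1$, and for each of them the failure of $r\in B(x)$ comes from $\lceil r\rceil<|x|-1$, from $\ce(x)\ge r$, or from $\ce(h(x))<r$; choosing a real $\rho$ with $2<\rho<r$ exceeding every $\ce(h(x))$ of the third kind, one checks that $\rho$ avoids every $B(x)$, so $\rho\in S$, contradicting $\rho<\alpha_0$. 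Hence $\alpha_0=\beta^+$ for a real $\beta$, and $\beta\notin S$ since $\beta<\beta^+=\min S$. If $\beta\le 2$ then $\beta^+\le 2^+\le\min S=\beta^+$ forces $\beta=2$; otherwise $\beta\in B(x)$ for some $x$, and since $\beta^+=\min S\notin B(x)$ while $B(x)\supseteq[\beta,\ce(h(x))]$ would contain $\beta^+$ unless $\beta=\ce(h(x))$, we get $\beta=\ce(h(x))$. In both cases $\beta$ is rational, as asserted.

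The work here is bookkeeping rather than a deep idea: the one genuine subtlety is carrying the ``plus'' members of $\hatr$ correctly through the definition of $\alpha$-freeness and through the length bound $\lceil\alpha\rceil+1$ (which behaves irregularly as $\alpha$ crosses an integer), so that the description of $B(x)$ as an $\hatr$-interval closed on the right at a rational point is exactly right. Once that description is secured, the facts that $\min S$ is attained and is a ``plus'' of a rational number follow from elementary manipulation of finitely many such intervals.
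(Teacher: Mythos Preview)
Your proof is correct. The underlying observations are the same as in the paper: only condition~(c) depends on $\alpha$, the relevant words $x$ are finitely many once a length bound is fixed, and the critical exponents $\ce(h(x))$ are rational. The execution, however, is different. The paper gives a three-line descent: assuming a real $\alpha\in S$, it sets $\beta=\max\{\ce(h(x)):|x|\le\lceil\alpha\rceil+1,\ \ce(x)<\alpha\}$ and observes that $\beta^+\in S$ with $\beta^+<\alpha$, so no real number can be minimal. You instead give a structural description of the complement of $S$ as a locally finite union of $\hatr$-intervals $B(x)$ closed on the right at rational points, and then argue order-theoretically about $\inf S$. Your route is longer but also more complete: you explicitly establish that the infimum is attained (the paper's proof leaves this implicit) and you handle the boundary case $\beta=2$ separately. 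The paper's argument buys brevity; yours buys a cleaner picture of exactly which $\alpha$ fail and why.
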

\begin{proof}
    Assume that $h$ verifies the conditions of Theorem~\ref{t:kobayashi} for some $\alpha\in\mathbb{R}$ (without a plus!) and show that $\alpha$ is not minimal.
    Condition (c), which is the only one dependent on $\alpha$, says that $\ce(h(x))<\alpha$ whenever $|x|\le \lceil\alpha\rceil+1$ and $\ce(x)<\alpha$. 
    Taking the maximum critical exponent over a finite set of these finite words $h(x)$, one gets a rational $\beta=\max\{\ce(h(x))\mid |x|\le \lceil\alpha\rceil+1, \ce(x)<\alpha\}$.
    Then $\beta<\alpha$ and $h$ verifies the conditions of Theorem~\ref{t:kobayashi} for $\beta^+$.
    As $\alpha\in\mathbb{R}$, one gets $\beta^+<\alpha$; the lemma now follows.
\end{proof}
By Lemma~\ref{l:minkobayashi}, the morphism 
$g: 0\mapsto 011001001101, 1\mapsto 100110110010$ from the proof of Theorem~\ref{t:comp3free} is $\alpha$-free for some $\alpha<3$.
Then all morphisms of the form $g\mu^n$ are $\alpha$-free as well, so we get statement (b) for all lengths $k$ of the form $k=3\cdot 2^n$.

However, some clipped \TM morphisms violate condition (b3) of Theorem~\ref{t:kobayashi}.
For instance, the word $01101$ is both a prefix of $h(0)$ and a suffix of $h(1)$ for the morphism $h: 0\mapsto 011010010, 1\mapsto 100101101$. 
Hence, instead of checking whether all morphisms listed in Theorem~\ref{t:TMfact3free} satisfy this condition, we use a different argument.
As all odd-length morphisms from Theorem~\ref{t:TMfact3free} satisfy the conditions of Lemma~\ref{l:cubefree}, we strengthen the conclusion of Lemma~\ref{l:cubefree}.
Namely, we show that the morphism $f_v$ is $(3-1/k)^+$-free. 

Let us follow the proof of Lemma~\ref{l:cubefree}.
For an arbitrary fixed period $p$, we study the longest factor $U$ of $W=f_v(u)$ satisfying $\per(U)=p$, and analyse the case where $\exp(U)\ge 3$.
If $p\le k$, then the conditions $\exp(U)\ge 3$ and $\exp(U)> 3-1/k$ are equivalent.
In this case, the proof of the lemma guarantees the existence of a factor $u$ of $v$ such that $\exp(u)=\exp(U)$.
Now let $p>k$.
Note that the conditions on the prefixes and suffixes of $v$ imply $k\ge 5$.
We have $\lceil (1-1/k)\cdot p\rceil\ge \lceil (1-1/5)\cdot 6\rceil = 5$. 
Then the condition $\exp(U)> 3-1/k$ implies $|U|\ge \lceil(3-1/k)\cdot p\rceil\ge 2p+5$.
Hence $U$ contains a marker; let $x=U[i..i{+}4]$ be a marker.
If $i\ge p$, then $x=U[i{-}p..i{-}p{+}4]$; otherwise, $x=U[i{+}p..i{+}p{+}4]$.
In both cases, the marker $x$ occurs in $U$ twice at positions distinct by $p$.
Having proved this, we follow the rest of the proof of the lemma, finding the factor $u$ of $v$ such that $\exp(u)=\exp(U)$.
The existence of such a factor $u$ whenever $\exp(U)> 3-1/k$ ensures that $f_v$ is $(3-1/k)^+$-free.

Thus we have proved that for each odd integer $k$ the complementary morphism of length $k$ listed in Theorem~\ref{t:TMfact3free} is $(3-1/k)^+$-free.
Since every even-length morphism from this theorem is the composition of an odd-length morphism and an appropriate power of $\mu$, we obtain, for each length $k$ distinct from $3\cdot 2^n$, a complementary morphism that is $\alpha$-free for some $\alpha<3$.
Statement (b) of Theorem~\ref{t:below3} is thus proved.

\section{\texttt{Walnut}-Assisted Proofs} \label{s:walnut}

\subsection{Automata theory and \texttt{Walnut}} \label{s:walnutbasics}

In this section we review the automata theory we
need to discuss our proofs using \texttt{Walnut},
and give examples of how \texttt{Walnut} can be used to prove results.

A DFAO (deterministic finite automaton with output) is a slight generalization of the more familiar DFA (deterministic finite automaton).  In it, each state has an associated output.  The output associated with an input word $x$ is the output of the last state reached when processing $x$.
We say that a sequence
${\bf a} = (a_n)_{n \geq 0}$ is {\it $k$-automatic\/} if there exists a DFAO that, on input $n$ expressed in base $k$, outputs $a_n$. 
See \cite{Allouche&Shallit:2003} for more information.

A basic result that we will rely on throughout is the following \cite[Thm.~6.1, Cor.~6.2]{Bruyere&Hansel&Michaux&Villemaire:1994}:
\begin{theorem}
There is an algorithm that, given a
$k$-automatic sequence ${\bf a} = (a_n)_{n \geq 0}$ and a first-order logical formula $\varphi$ in the structure
$\langle \Enn, +, n \rightarrow a_n \rangle$, computes a DFA $M_\varphi$ that takes the unbound variables of $\varphi$ as inputs, and accepts precisely those values of the variables (represented in base $k$) that make $\varphi$ true.   If $\varphi$ has no free variables, the algorithm returns {\tt TRUE} or {\tt FALSE}, and accordingly constitutes a rigorous proof or disproof of the formula. 
\end{theorem}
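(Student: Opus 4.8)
The plan is to prove this by structural induction on the formula $\varphi$, carrying as an induction hypothesis that every subformula $\psi$ with free variables $x_1,\dots,x_m$ is matched by a DFA $M_\psi$ over the product alphabet $\{0,1,\dots,k-1\}^m$ that accepts exactly those $m$-tuples which, written in base $k$ and padded with leading zeros to a common length, satisfy $\psi$; crucially, the hypothesis also demands that $M_\psi$ be insensitive to leading zeros (prepending a column of $0$'s never changes acceptance). The base cases are the atomic formulas. For equality $x=y$ the synchronous comparison automaton (accepting precisely the tuples whose two tracks agree digit-by-digit) works, and for $x+y=z$ one uses the classical ``adder'' automaton whose states track the carry digit while scanning the three tracks in the fixed digit order. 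For the atomic relations coming from the sequence — the predicates $a_x = c$ for each letter $c$ of the alphabet of $\mathbf{a}$, and $a_x = a_y$ — we invoke the hypothesis that $\mathbf{a}$ is $k$-automatic: a DFAO $A$ computes $n\mapsto a_n$ from the base-$k$ representation of $n$, so $\{x : a_x = c\}$ is recognized by $A$ with its accepting set redefined to be the states of output $c$, and $\{(x,y): a_x=a_y\}$ by a suitable product of $A$ with itself. In each base case a routine normalization makes the automaton leading-zero-insensitive.

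The inductive step consists of the three closure constructions. For $\psi_1\wedge\psi_2$ and $\psi_1\vee\psi_2$ we take the product automaton after first inflating the alphabets of $M_{\psi_1}$ and $M_{\psi_2}$ with ``don't-care'' tracks so that both read tuples over the union of the two variable sets; leading-zero-insensitivity of the factors guarantees the product behaves correctly when one subformula would otherwise need fewer digits on some track. For $\neg\psi$ we complete $M_\psi$ and interchange accepting and non-accepting states; here one must be sure (again using insensitivity to leading zeros) that the complemented automaton recognizes exactly the complement of the solution set of $\psi$ rather than a language polluted by badly padded inputs. For $\exists y\,\psi$ we delete the $y$-track from $M_\psi$, obtaining a nondeterministic automaton, and determinize it by the subset construction; soundness requires knowing that a satisfying $y$, if one exists, may be taken with at most as many base-$k$ digits as the other arguments — equivalently, that the NFA may read extra all-zero columns on the surviving tracks while freely guessing digits on the erased one — and this is exactly where the uniform leading-zero padding is used. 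Universal quantification is obtained by the De Morgan rewriting $\forall y\,\psi \equiv \neg\exists y\,\neg\psi$. Walking these constructions up the parse tree of $\varphi$ produces $M_\varphi$; when $\varphi$ is a sentence, $M_\varphi$ processes only the empty word, so it either accepts or rejects $\varepsilon$, and the algorithm outputs {\tt TRUE} or {\tt FALSE} accordingly, which by correctness of the construction is a proof or disproof of $\varphi$. Every step is effective, so the whole procedure is an algorithm.

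The step I expect to be the main obstacle is not any single construction but the representation bookkeeping that underlies all of them: fixing a digit order, insisting that every track of a tuple have the same length, and proving that negation and existential projection interact correctly with the freedom to prepend leading zeros. None of this is deep, but it is where a careless argument fails — naive complementation without normalization recognizes the wrong language, and naive projection can overlook witnesses that are ``longer'' than the other arguments. Folding leading-zero-insensitivity into the induction hypothesis, so that each $M_\psi$ is by construction immune to padding, is precisely what keeps these constructions sound; with that invariant in place the rest is a matter of executing standard DFA/NFA manipulations.
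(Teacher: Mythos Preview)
Your proof sketch is the standard and correct structural-induction argument for this classical result, and the bookkeeping point you highlight (leading-zero normalization to make complementation and projection interact soundly) is indeed the one genuine subtlety. However, the paper does not prove this theorem at all: it is quoted as a known result with a citation to Bruy\`ere, Hansel, Michaux, and Villemaire (1994), and no proof is given in the paper itself. So there is nothing to compare your argument against; you have simply supplied the (correct) textbook proof of a result the authors take as background.
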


Furthermore, this algorithm has been implemented in free software called \texttt{Walnut} \cite{Mousavi:2016,Shallit:2022}.
As a warm-up, we provide \texttt{Walnut} proofs
of Lemmas~\ref{l:blocks}, \ref{l:intersection}, and \ref{l:periods}.

\begin{proof}[Alternative proof of Lemma~\ref{l:blocks}]
Since there is no way to express $2^n$ directly in the logical theory that \texttt{Walnut} is based on, instead we let $x$ play the role of $2^n$, and simply assert that $x$ is some (unspecified) power of $2$.

Without loss of generality we can assume
that $a = 0$.  Supposing that $x = 2^n$, we have $\mu^n(a \overli{a}) =
{\bf t}[0..2x-1]$, and 
$\mu^n(aa) = {\bf t}[5x..7x-1]$.  Then the
lemma's assertions can be translated into
\texttt{Walnut} as follows:
\begin{verbatim}
reg power2 msd_2 "0*10*":
def same "At (t<n) => T[i+t]=T[j+t]":
# asserts that factors t[i..i+n-1] and t[j..j+n-1] are identical
def complementary "At (t<n) => T[i+t]!=T[j+t]":
# asserts that factors t[i..i+n-1] and t[j..j+n-1] are complementary

eval lemma11_a_1 "Ai,x ($power2(x) & $same(0,i,x) & i<=x) => (i=0|i=x)":
eval lemma11_a_2 "Ai,x ($power2(x) & $complementary(0,i,x) & i<=x) => 
   (i=0|i=x)":

eval lemma11_b_1 "Ai,x ($power2(x) & $same(5*x,5*x+i,x) & i<=x) => 
   (i=0|i=x)":
eval lemma11_b_2 "Ai,x ($power2(x) & $complementary(5*x,5*x+i,x) & i<=x) =>
   (i=0|2*i=x)":
\end{verbatim}
and \texttt{Walnut} returns {\tt TRUE} for all of them.
\end{proof}
Hopefully, \texttt{Walnut}'s syntax is fairly clear, but here are some brief explanations.  The {\tt reg} command translates a regular expression to an automaton; the {\tt def} command defines an automaton with inputs given by the free variables of the expression; the {\tt eval} command evaluates the truth or falsity of an assertion with no free variables.
The letters {\tt A} and {\tt E} represent the universal and existential quantifiers, respectively.   The letter {\tt T} refers to the infinite \TM word $\bf t$.  
The symbol {\tt \&} denotes logical ``and'',
{\tt |} denotes logical ``or'', and
{\tt =>} denotes implication. 

\begin{proof}[Alternative proof of Lemma~\ref{l:intersection}.]
We only need to handle the case of a word $u$ of length at most
$2k$.   Let $n= |u|$.  In this case we are comparing
${\bf t}[0..2k-n-1]$ to ${\bf t}[n-k..k-1]$.

\begin{verbatim}
def overlap_eq "n>k & n<=2*k & $same(0,n-k,2*k-n)":
def overlap_comp "n>k & n<=2*k & $complementary(0,n-k,2*k-n)":
eval lemma12_1a "Ak,n $overlap_eq(k,n) => 2*n>=3*k":
eval lemma12_1b "Ak,n  ($overlap_eq(k,n) & 2*n=3*k) => $power2(k)":
eval lemma12_2 "Ak,n $overlap_comp(k,n) => 2*n>=3*k":
eval lemma12_2b "Ak,n  ($overlap_comp(k,n) & 2*n=3*k) => $power2(k)":
\end{verbatim}
and \texttt{Walnut} returns {\tt TRUE} for all of them.
\end{proof}

\begin{proof}[Alternative proof of Lemma~\ref{l:periods}]
Again,
we let $x$ play the role of $2^n$.
First we create a synchronized automaton that
accepts $p = m \cdot x =  m \cdot 2^n$ in parallel with $x=2^n$.  
\begin{verbatim}
reg twopower msd_2 msd_2 "[0,0]([1,0]|[0,0])*[1,1][0,0]*":
\end{verbatim}

Next we create a synchronized automaton for $(i,n,p)$, asserting that
$p$ is a period of ${\bf t}[i..i+n-1]$.

\begin{verbatim}
def tmper "p>0 & p<=n & Aj (j>=i & j+p<i+n) => T[j]=T[j+p]":
\end{verbatim}

Now we rewrite the assertions using the following
\texttt{Walnut} code.
\begin{verbatim}
eval casea "Ai,n,p,x ($tmper(i,n,p) & $twopower(p,x)) => n<=p+3*x":

eval caseb_i "Ak,p,x ($tmper(0,k,p) & p<k & $twopower(p,x)) => k!=x":
eval caseb_ii "Ak,p,x ($tmper(0,k,p) & p<k & $twopower(p,x) & p=3*x) => 
   k<=5*x":
eval caseb_iii "Ak,p,x ($tmper(0,k,p) & p<k & $twopower(p,x) & p=5*x) => 
   k<=6*x":
\end{verbatim}
and \texttt{Walnut} returns {\tt TRUE} for all of them.
\end{proof}

The reader is invited to contrast the more traditional proofs of Lemmas~\ref{l:blocks}, \ref{l:intersection}, and \ref{l:periods} given in Section~\ref{s:trunc} with the ones we constructed here using \texttt{Walnut}.

\subsection{Alternative proof of the case $\alpha=3^+$ of Theorem~\ref{t:3plusfree}} \label{s:walnut3plus}

We now use \texttt{Walnut} to provide an alternative proof of the crucial case of Theorem~\ref{t:3plusfree}.
The proof is based on Kobayashi's conditions (Theorem~\ref{t:kobayashi}).
For every morphism $h_k$ we translate each of the conditions of Theorem~\ref{t:kobayashi} for the exponent $\alpha = 3^+$ into \texttt{Walnut}, and let \texttt{Walnut} verify the claims.


\begin{proof}[Alternative proof of Theorem~\ref{t:3plusfree} (case $\alpha=3^+$)]
Conditions (b1) and (b2) follow trivially from the definition of $h_k$. 
For the other conditions, we use \texttt{Walnut} as described above.  

We start with condition (c).  By the criterion, we must apply
$h_k$ to all $3^+$-free binary words $w$ of length $4$ and verify
that $h_k (w)$ is $3^+$-free in all cases.  Of the $16$ possible
binary words, only $0000$ and $1111$ are not $3^+$-free, so at first
glance it would seem that we have to check the condition (c) for $14$
binary words.

However, to save effort and computation time, there are some symmetries we can exploit to avoid having to test all $14$ words.  Given a morphism
$h$, we can define its reverse morphism $h^R (a) = h(a)^R$ by
reversing the image of each letter.   Then it is easy to see that
$h(w) = (h^R (w))^R$.  On the other hand, for the particular case of
$h_k$, by the definition we have $h_k (w) =
\overline{ h_k( \overline{w})} $.   Since $\ce(w) = \ce(w^R)$ and
$\ce(w) = \ce(\overline{w})$, this means it suffices to test the $3^+$-free
property for five words:  $0001$, $0010$, $0011$, $0101$, and $0110$.

We now create first-order logical formulas to test the condition.  We illustrate
the idea for the specific case of $0001$.   The
idea is to create a formula that asserts that $(h_k (0001))[n] = 1$.

In order to do that, we must be able to compute $n \bmod k$ and
$\lfloor n/k \rfloor$ with 
a first-order formula, for variables $n$ and $k$.  In full generality, this is provably impossible
to state in the particular first-order logical theory that
\texttt{Walnut} uses, so we have to use some subterfuge:   since we only
care about the range
$0 \leq n < 4k$, it actually suffices to be able to compute $n \bmod k$
just in this range, and not for all $n$.
We use the following formula:
\begin{multline*}
    \ndmodk(k,n,r,z) := 
(0\leq n<k \andd z=n \andd r=0) \orr 
(k\leq n<2k \andd z=n-k \andd r=1) \orr \\
(2k\leq n<3k \andd z=n-2k \andd r=2)\orr
(3k\leq n<4k \andd z=n-3k \andd r=3),
\end{multline*}
which evaluates to
{\tt TRUE}
if and only if $0 \leq n < 4k$ and
$r = \lfloor n/k \rfloor$ and $z = n \bmod k$.

This can be translated to {\tt Walnut\/} as follows:
\begin{verbatim}
def ndmodk "(n<k & z=n & r=0) | (n>=k & n<2*k & z+k=n & r=1) |
   (n>=2*k & n<3*k & z+2*k=n & r=2) | (n>=3*k & n<4*k & z+3*k=n & r=3)":
\end{verbatim}
The resulting automaton has $40$ states.

Next, we make a logical formula that evaluates
to {\tt TRUE} if $h_k(0001)[n] = 1$:
\begin{align*}
\hone(k,n) &:= (\exists r,z \ r\leq 2 \andd \ndmodk(k,n,r,z) \andd {\bf t}[z]=1) \orr \\
&\quad\quad (\exists r,z \ r=3 \andd \ndmodk(k,n,r,z) \andd {\bf t}[z]=0).
\end{align*}
This can be translated into \texttt{Walnut} as follows:
\begin{verbatim}
def h0001 "(Er,z r<=2 & $ndmodk(k,n,r,z) & T[z]=@1) | 
   (Er,z r=3 & $ndmodk(k,n,r,z) & T[z]=@0)":
\end{verbatim}
The resulting automaton has 71 states.

For the other four strings, we use analogous formulas, as follows:
\begin{verbatim}
def h0010 "(Er,z (r<=1|r=3) & $ndmodk(k,n,r,z) & T[z]=@1) | 
   (Er,z r=2 & $ndmodk(k,n,r,z) & T[z]=@0)":
def h0011 "(Er,z r<=1 & $ndmodk(k,n,r,z) & T[z]=@1) | 
   (Er,z (r=2|r=3) & $ndmodk(k,n,r,z) & T[z]=@0)":
def h0101 "(Er,z (r=0|r=2) & $ndmodk(k,n,r,z) & T[z]=@1) | 
   (Er,z (r=1|r=3) & $ndmodk(k,n,r,z) & T[z]=@0)":
def h0110 "(Er,z (r=0|r=3) & $ndmodk(k,n,r,z) & T[z]=@1) | 
   (Er,z (r=1|r=2) & $ndmodk(k,n,r,z) & T[z]=@0)":
\end{verbatim}
Each one of these commands produces a $71$-state automaton.

Now we create a formula that asserts that $(h_k(0001))[i..i+2n]$ has
period $n$:
$$ \testone(i,k,n) := \forall u, v\ (i \leq u\leq i+2n \andd v=u+n) \implies 
(\hone(k,u) \iff \hone(k,v)) .$$
In \texttt{Walnut} this is as follows:
\begin{verbatim}
def test0001 "Au,v (u>=i & u<=i+2*n & v=u+n) => 
   ($h0001(k,u) <=> $h0001(k,v))"::
# 27 states
\end{verbatim}

Finally, we create a formula that asserts that there do not exist $k, i, n$
with $k,n \geq 1$ such that
$h_k(0001)$ contains a factor that is a
$3^+$-power:
$$ \checkone := \neg\exists k,i,n \ (n \geq 1) \andd (k \geq 1) \andd (i+3n<4k) \andd \testone(i,k,n).$$
In \texttt{Walnut} this is
\begin{verbatim}
eval check0001 "~Ek,i,n n>=1 & k>=1 & (i+3*n<4*k) & $test0001(i,k,n)":
\end{verbatim}
and \texttt{Walnut} returns
{\tt TRUE}.   The analogous expressions for the strings $0010, 0011, 0101, 0110$ have almost exactly the same behavior, and all return {\tt TRUE}.

Finally, conditions (a) and (b3) of Theorem~\ref{t:kobayashi} can be verified in a very similar manner by determining, for which
pairs $(i,k)$ with
$k \geq 1$ and $k/2 \leq i < k$ we have
\begin{itemize}
    \item $\mathbf{t}[0..i{-}1]=\mathbf{t}[k{-}i..k{-}1]$ for condition (a),
    \item $\mathbf{t}[0..i{-}1]=\overline{\mathbf{t}[k{-}i..k{-}1]}$ for condition (b3).
\end{itemize}
We check these with the following \texttt{Walnut} code:
\begin{verbatim}
def tpsa "k>0 & 2*i>=k & i<k & Aj (j<i) => T[j]=T[k+j-i]":
def tpsb "k>0 & 2*i>=k & i<k & Aj (j<i) => T[j]!=T[k+j-i]":
\end{verbatim}
In response, \texttt{Walnut} produces two automata accepting the possible pairs $(i,k)$ for each of two cases.
The automaton for {\tt tpsa} accepts nothing (as $\mathbf{t}$ is overlap-free and contains no squares among its prefixes)  and thus verifies condition (a).
The automaton for {\tt tpsb} only accepts the pairs $(i,k) = (2^n, 2^{n+1})$ for $n \geq 0$.   In this case it is easy to see that condition (b3) is satisfied, with $u=\mu^n(0)$ and $v=\mu^n(1)$.
The reference to Theorem~\ref{t:kobayashi} completes the proof of Theorem~\ref{t:3plusfree} for $\alpha=3^+$.
\end{proof}

\begin{remark}
By far, the most resource-intensive part of our \texttt{Walnut} computation for Theorem~\ref{t:3plusfree} was the calculation of the five {\tt test}
automata.  Each one required about 350G of RAM and used about 75,000 seconds of CPU time. Computing
the five automata involved, as an intermediate step, minimizing a DFA of 
162,901,489 states.  
\end{remark}

\subsection{New cubefree morphisms and another proof of Theorem~\ref{t:3free_lengths}} \label{s:walnut3free}

Now we turn to the case of cubefree morphisms.
We start with adding to the result of Theorem~\ref{t:exponents}. Namely, we determine for which $k$ the truncated \TM morphisms $h_k$ are cubefree.  
Clearly, this is not always the case: for example, $0010$ is cubefree, but
$h_3(0010) = 011011100011$ is not.

Similar to Section~\ref{s:walnut3plus}, we encode Kobayashi's conditions with formulas and verify them by \texttt{Walnut}. 
Note that our result is stated for $h_{k+1}$ (it is possible to state it for $h_k$, but the automaton will have more states and less clear structure).

\begin{theorem} \label{t:automaton}
    The morphism $h_{k+1}$ is cubefree if and only if the base-$2$ representation of $k$ is accepted by the automaton $\cA$ depicted in Figure~\ref{f:tm3}.
    \begin{figure}[H]
    \begin{center}
    \includegraphics[scale=0.9, trim = 30 39.5 30 41, clip]{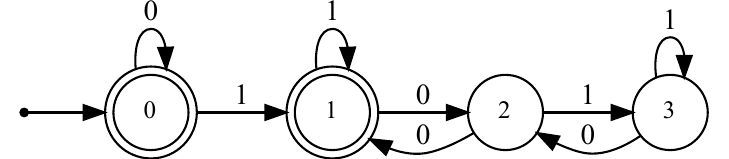}
    \end{center}
    \caption{Automaton $\cA$ accepting those $k$ for which $h_{k+1}$ is a cubefree morphism.}
    \label{f:tm3}
    \end{figure}
\end{theorem}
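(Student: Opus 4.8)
The plan is to mirror the strategy already used for the case $\alpha = 3^+$ in Section~\ref{s:walnut3plus}, but now with $\alpha = 3$ (the unadorned cube-freeness condition) rather than $3^+$, and with $k$ playing the role of a free variable rather than being quantified away. The key difference is that we want not merely a yes/no answer but a \emph{description} of the set of good $k$, so instead of evaluating a closed sentence with \texttt{eval}, we will \texttt{def} an automaton whose accepted language is exactly the base-$2$ representations of those $k$ for which $h_{k+1}$ is cubefree, and then read off its (minimized) transition structure as the automaton $\cA$ in Figure~\ref{f:tm3}.

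First I would re-use the building blocks from the $3^+$ proof: the formula $\ndmodk(k,n,r,z)$ (adapted so that the range is $0 \le n < 3(k{+}1)$ rather than $0 \le n < 4(k{+}1)$, since a cube has only three periods' worth of letters), together with the five letter-extraction predicates $\hone$-style formulas for the images $h_{k+1}(w)$ as $w$ ranges over the five representatives $0001, 0010, 0011, 0101, 0110$ of $3$-free length-$4$ binary words modulo complementation and reversal. (As in the earlier proof, conditions (b1) and (b2) of Theorem~\ref{t:kobayashi} are immediate from the definition of a complementary morphism; conditions (a) and (b3) are handled by the automata \texttt{tpsa}/\texttt{tpsb} exactly as before, the only change being that the accepted pairs are now $(i,k{+}1)$.) For condition (c) at exponent $\alpha=3$, for each of the five words $w$ I would build a predicate $\test_w(i,k,n)$ asserting that $h_{k+1}(w)[i..i+3n-1]$ has period $n$ — i.e. that it contains a cube of period $n$ at position $i$ — and then the predicate
$$\mathrm{bad}(k) := \exists i,n\ (n \ge 1)\andd (i + 3n \le 3(k{+}1)) \andd \bigvee_w \test_w(i,k,n),$$
whose negation, as a function of the single free variable $k$, yields a DFA reading $k$ in base $2$. \texttt{Walnut} then minimizes this DFA, and by inspection it has $4$ states; displaying it gives Figure~\ref{f:tm3}. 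One still has to argue that Theorem~\ref{t:kobayashi} with $\alpha = 3$ really does characterize cube-freeness of $h_{k+1}$ and not merely give a sufficient condition: sufficiency is the content of the theorem, and for necessity one notes that if $h_{k+1}(w)$ contains a cube for some $3$-free $w$, then by a standard compactness/pigeonhole reduction (the image of any cube-containing word reduces to the image of one of bounded length, since $h_{k+1}$ is uniform) the failure is already witnessed by some $w$ of length at most $\lceil 3 \rceil + 1 = 4$, so condition (c) is violated; conditions (a), (b1), (b2), (b3) hold automatically as noted. Hence cube-freeness of $h_{k+1}$ is \emph{equivalent} to the conjunction of Kobayashi's conditions here, and the automaton $\cA$ is exact.

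I expect the main obstacle to be purely computational rather than conceptual: as the remark following the $3^+$ proof already flags, the intermediate DFAs in the construction of the five \texttt{test} automata are enormous (hundreds of gigabytes of RAM, DFAs with $\sim 10^8$ states before minimization), and doing this for $\alpha = 3$ with $k$ kept free — so that the automata carry an extra coordinate — is at least as expensive. The saving graces are that the final minimized object is tiny ($4$ states), which is a strong consistency check, and that the bookkeeping for $n \bmod k$ only needs to be correct on the range $n < 3(k{+}1)$, keeping $\ndmodk$ small. A secondary (but routine) point to get right is the off-by-one shift between $h_k$ and $h_{k+1}$ and the corresponding shift in the arguments of \texttt{tpsa}/\texttt{tpsb} and $\ndmodk$; stating the theorem for $h_{k+1}$ is precisely the device that makes the resulting automaton have the clean $4$-state form drawn in Figure~\ref{f:tm3}.
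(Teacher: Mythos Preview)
Your overall plan---verify Kobayashi's conditions (a), (b1)--(b3) once and for all, then check condition (c) with $k$ kept as a free variable and let \texttt{Walnut} output the minimized automaton---is exactly the paper's approach. However, two concrete slips would make the computation return the wrong answer.

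First, the word $0001$ does \emph{not} belong in the test set for $\alpha=3$: it contains the cube $000$, so it is $3^+$-free but not $3$-free. The representatives (modulo complementation and reversal) of cubefree length-$4$ binary words are only the four words $0010$, $0011$, $0101$, $0110$, and this is precisely what the paper tests. If you conjunct in the check for $0001$, then since $h_{k+1}(0001)$ always contains the cube $h_{k+1}(0)^3$, that check fails for every $k$ and your automaton would accept the empty language.

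Second, the image $h_{k+1}(w)$ of a length-$4$ word $w$ has length $4(k{+}1)$, not $3(k{+}1)$. The ``three periods'' of a cube bound the length of the cube as a factor, not the length of the ambient word in which you search for it. Hence $\ndmodk$ must still cover $r\in\{0,1,2,3\}$ (positions $0,\ldots,4(k{+}1)-1$), and the search constraint in your $\mathrm{bad}(k)$ should read $i+3n\le 4(k{+}1)$ rather than $i+3n\le 3(k{+}1)$. With the smaller range you would miss any cube that extends into the fourth block of the image. The paper simply reuses the same $\ndmodk$ and the same bound $i+3n<4k$ from the $3^+$ proof, only changing the period test from $u\le i+2n$ to $u<i+2n$ and substituting $k+1$ for $k$ at the very end.

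One minor remark: your necessity argument via ``compactness/pigeonhole'' is more than is needed. Condition (c) of Theorem~\ref{t:kobayashi} is \emph{trivially} necessary for $h_{k+1}$ to be cubefree (a cubefree morphism preserves cubefreeness of all inputs, hence in particular of inputs of length $\le 4$). Since conditions (a) and (b1)--(b3) were already shown to hold for every $h_k$ independently of $\alpha$, the biconditional follows at once.
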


\begin{proof} 
Conditions (a) and (b1)--(b3) of Theorem~\ref{t:kobayashi} are independent of $\alpha$ and were verified for all morphisms $h_k$ in Section~\ref{s:walnut3plus}.
Thus it suffices to check condition (c) for $\alpha=3$ and the words $0010$, $0011$, $0101$, and $0110$.
We use the following 
\texttt{Walnut} code to test the word $0010$:
\begin{verbatim}
def h0010b "(Er,z (r<=1|r=3) & $ndmodk(k,n,r,z) & T[z]=@1) | 
   (Er,z r=2 & $ndmodk (k,n,r,z) & T[z]=@0)"::
def test0010b "Au,v (u>=i & u<i+2*n & v=u+n) => 
   ($h0010b(k,u) <=> $h0010b(k,v))"::
def check0010b "~Ei,n n>=1 & (i+3*n<4*k) & $test0010b(i,k,n)"::
\end{verbatim}
The code to test the other three words is analogous to the above one.
Then we construct a single automaton that combines all these tests:
\begin{verbatim}
def hcubefree "$check0010b(k+1) & $check0011b(k+1) & $check0101b(k+1) &
   $check0011b(k+1)":
\end{verbatim}
Evaluating \texttt{hcubefree}, \texttt{Walnut} returns exactly the automaton $\cA$.
\end{proof}

\begin{remark}
This was another significant computation.
Proving these results involved minimizing automata of 162,901,489 states.  The typical CPU time required was about 76,000  seconds for each of the four words.
\end{remark}
\begin{remark}
    If we consider a uniformly random walk in the automaton $\cA$, then the limit probability equals $0$ for state $0$ and $1/3$ for each of the other three states.
    Therefore, the set of numbers accepted by $\cA$ has density $1/3$ in $\mathbb{N}$.
    In other words, approximately one of three morphisms $h_k$ is cubefree.
\end{remark}
\begin{remark}
    The automaton in Figure~\ref{f:tm3} accepts, in particular, all words having the suffix 100.
    Hence, if $k\equiv 4\pmod 8$, then $h_{k+1}$ is cubefree, which is exactly statement (c) of Theorem~\ref{t:TMfact3free} (recall that $f_{0,k}=h_k$).
    Thus, Theorem~\ref{t:automaton} implies this statement.
    However, our attempt to verify Kobayashi's conditions for the morphisms from statement (b) failed due to resource limitations.
    Thus, we were unable to get an ``automatic'' proof of Theorem~\ref{t:TMfact3free}.    
\end{remark}

Nevertheless, we can use \texttt{Walnut} to get another proof of Theorem~\ref{t:3free_lengths}.
For this proof, we use morphisms based on the truncations of two different underlying automatic sequences.  
Together, these two families of morphisms handle all $k \equiv 1,5\pmod 6$.

The two sequences we use are
\begin{align*}
{\bf x} &= 0 \, \overline{\bf t} = 0 10010110 \cdots\\
&\text{and} \\
{\bf y} &= 0\, \mu(1)\, \mu^2(0)\, \mu^3(1)\, \mu^4(0) \cdots = 01001101001011001101001100101101001011 \cdots 
\end{align*}
This latter sequence $\bf y$ is
sequence \seqnum{A059448} in the OEIS. 
It is the unique
infinite word satisfying the equation
${\bf y} = 010 \mu^2({\bf y})$.
The sequence $0\mathbf y$ appeared in literature \cite{Shallit:2011,Du&Shallit&Shur:2015,Shallit&Shur:2019} under the name ``twisted \TM sequence''.
This sequence counts the parity of the number of nonleading $0$'s in the binary representation of $n$. 

The automata generating these two automatic sequences are depicted in Figures~\ref{f:x} and
\ref{f:y}, respectively.
\begin{figure}[!htb]
\centering
\includegraphics[scale=0.9, trim = 30 40 30 40, clip]{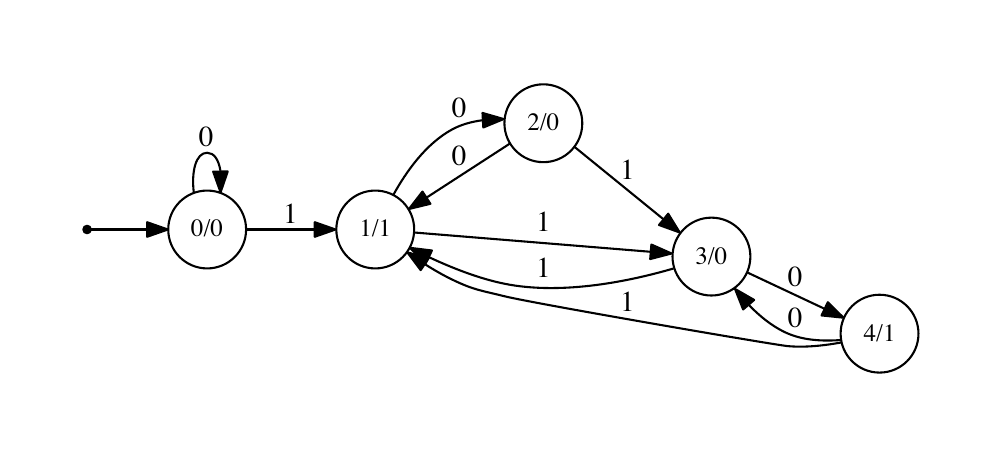}
\caption{Automaton for the sequence {\bf x}.}
\label{f:x}
\end{figure}
\begin{figure}[!htb]
\centering
\includegraphics[scale=0.9, trim = 30 40 30 40, clip]{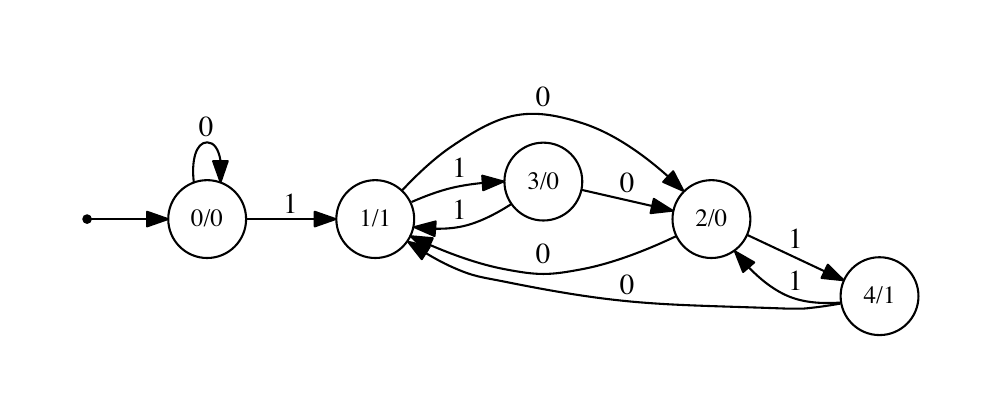}
\caption{Automaton for the sequence {\bf y}.}
\label{f:y}
\end{figure}

We now define morphisms $f_k$ (based on $\bf x$) and $g_k$ (based on $\bf y$), in analogy with $h_k$ defined previously:
$f_k$ maps $0$ to the first $k$ bits of $\bf x$ and $1$ to the complement of $f_k(0)$, while
$g_k$ maps $0$ to the first $k$ bits of
$\bf y$ and $1$ to the complement of
$g_k (0)$.

We then carry out exactly the same kind of proof as we did for Theorem~\ref{t:automaton}.
Here is the \texttt{Walnut} code for testing the cubefreeness of the morphisms $f_k$ and $g_k$ applied to $0010$; the analogous code for the other three words should be clear.
\begin{verbatim}
def f0010 "(Er,z (r<=1|r=3) & $ndmodk(k,n,r,z) & X[z]=@1) |
   (Er,z r=2 & $ndmodk(k,n,r,z) & X[z]=@0)":
def testf0010 "Au,v (u>=i & u<i+2*n & v=u+n) => 
   ($f0010(k,u) <=> $f0010(k,v))"::
def checkf0010 "~Ei,n n>=1 & (i+3*n<4*k) & $testf0010(i,k,n)"::
def g0010 "(Er,z (r<=1|r=3) & $ndmodk(k,n,r,z) & Y[z]=@1) |
   (Er,z r=2 & $ndmodk(k,n,r,z) & Y[z]=@0)":
def testg0010 "Ax (x<2*n) =>($g0010(k,i+x) <=> $g0010(k,i+n+x))"::
def checkg0010 "~Ei,n n>=1 & (i+3*n<4*k) & $testg0010(i,k,n)"::
\end{verbatim}
The careful reader will notice a small difference in how we defined {\tt testf0010} and {\tt testg0010}; this was
used to improve efficiency in the computation.

Once we have the analogous code for the other three words, we can prove Theorem~\ref{t:3free_lengths}.

\begin{proof}
If $k \equiv \modd{1,5} {6}$, then we claim either $f_k$ or $g_k$ satisfies condition (c) of Theorem~\ref{t:kobayashi}.  More precisely,
$k \equiv \modd{1,5} {12}$ is handled by
$f_k$ and $k \equiv \modd{7,11} {12}$ is handled
by $g_k$.  To see this, we use the following \texttt{Walnut} code:
\begin{verbatim}
def mod12 "Ej k=12*j+r":
def fcubefree "$checkf0010(k) & $checkf0011(k) & $checkf0101(k) &
   $checkf0110(k)":
def gcubefree "$checkg0010(k) & $checkg0011(k) & $checkg0101(k) &
   $checkg0110(k)":
eval thm6a "Ak ($mod12(k,1)|$mod12(k,5)) => $fcubefree(k)":
eval thm6b "Ak ($mod12(k,7)|$mod12(k,11)) => $gcubefree(k)":
\end{verbatim}
and \texttt{Walnut} returns {\tt TRUE} for both of
the last two commands.

We also need to check conditions (a) and (b3).  These can be checked as in the proof of Theorem~\ref{t:3plusfree},
as follows:
\begin{verbatim}
def x_pref_suff_00 "k>0 & 2*i>=k & i<k & Aj (j<i) => X[j]=X[k+j-i]": 
def x_pref_suff_01 "k>0 & 2*i>=k & i<k & Aj (j<i) => X[j]!=X[k+j-i]":
def y_pref_suff_00 "k>0 & 2*i>=k & i<k & Aj (j<i) => Y[j]=Y[k+j-i]": 
def y_pref_suff_01 "k>0 & 2*i>=k & i<k & Aj (j<i) => Y[j]!=Y[k+j-i]": 
\end{verbatim}
This \texttt{Walnut} code constructs automata accepting the pairs $(i,k)$ for which 
$k \geq 1$, $k/2 \leq i < k$, where
${\bf x}[0..i{-}1] = {\bf x}[k{-}i..k{-}1]$ (resp.,
${\bf x}[0..i{-}1] = \overline{{\bf x}[k{-}i..k{-}1]}$), and the same for $\bf y$.
After inspection of the resulting automata, we easily see that
\begin{itemize}
    \item {\tt x\_pref\_suff\_00} accepts only
    the pair $(3,6)$;
    \item {\tt x\_pref\_suff\_01} accepts only the
    pairs $(2^{2i}, 2^{2i+1})$ and
    $(2^{2i} + 1, 2^{2i+1} + 1)$ for 
    $i \geq 0$;
    \item {\tt y\_pref\_suff\_00} accepts nothing;
    \item {\tt y\_pref\_suff\_01} accepts only the
    pairs $(1,2)$ and $(2,3)$.
\end{itemize}
Checking each case shows that Kobayashi's conditions
(a) and (b3) never fail when $k \equiv \modd{1,5} {6}$; in fact, the only time they are not satisfied is for $\bf x$ and $(i,k) = (2^{2i} + 1, 2^{2i+1} + 1)$ for 
    $i \geq 0$.  But in this case $k$ is divisible by $3$.
    
We have therefore demonstrated the existence of a $k$-uniform cubefree morphism when $k \equiv \modd{1,5} {6}$.

Otherwise write $k = 2^a 3^b c$, where
$c \equiv \modd{1,5} {6}$. 
Then the desired morphism is the composition $\mu^a \circ \rho^b \circ f_c$ or $\mu^a\circ \rho^b \circ g_c$, according to whether $c \equiv \modd{1,5} {12}$ or $c \equiv \modd{7,11} {12}$.
Recall that $\rho: 0 \mapsto 001, 1 \mapsto
011$ is the morphism used by Currie and Rampersad \cite{Currie&Rampersad:2009}.
\end{proof}

\begin{remark}
    This was yet another significant computation.
    Proving the results for $f_k$ involved minimizing automata of 187,445,235 states.  The typical CPU time required was about 105,000  seconds for each of the four words.
    Proving the results for $g_k$ involved minimizing automata of 200,258,107 states.  The typical CPU time required was about 110,000
    seconds for each of the four words.
\end{remark}



\section{Discussion} \label{s:discussion}

1. In this paper, we study $\alpha$-free uniform binary morphisms.
Our Theorems~\ref{t:3plusfree}--\ref{t:below3}, together with earlier known Theorem~\ref{t:3free_lengths}, reveal a ``phase transition'' at $\alpha=3$.
Namely, $\alpha$-free \emph{uniform} binary morphisms of all lengths exist for every $\alpha\ge 3$, but for every $\alpha<3$ there are some ``forbidden'' lengths.
And if we consider only \emph{complementary} morphisms instead of arbitrary uniform binary morphisms, then we get almost the same picture: the only difference is that forbidden lengths exist already for $\alpha=3$.

Given the characterization of $\alpha$-free morphisms for $\alpha\le 7/3$ (Theorem~\ref{t:73free}), a natural open problem is to characterize the phase transitions in the remaining interval $(7/3)^+\le\alpha<3$. 
We conjecture that the set of admissible lengths of morphisms stays the same whenever $(8/3)^+\le\alpha<3$, but there is another phase transition at $8/3$.

\medskip\noindent
2. We note that there exist other properties separating cubefree and $3^+$-free words in the binary case.
For example, infinite cubefree words must contain squares of even period while $3^+$-free words can avoid them \cite{Badkobeh&Crochemore:2015}. 

In a recent paper, Meleshko et al.~\cite{Meleshko} studied infinite pseudoperiodic words.
We say an infinite word $\bf x$ is $( p_1, p_2, \ldots, p_t )$-pseudoperiodic if for all $i$ there exists $j$, $1 \leq j \leq t$, such that ${\bf x}[i] = {\bf x}[i+p_j]$.
Their Conjecture 34 asserts that for all pairs $1 \leq a < b$, $b \not=2a$, there is an infinite binary $(a,b)$-pseudoperiodic word
that is $3^+$-free.
If the conjecture holds, it indicates another phase transition at $\alpha=3$, because for some pairs $(a,b)$ the word with required properties cannot be cubefree.

Our Theorem~\ref{t:3plusfree} shows that, in order to prove this conjecture, it suffices to prove it in the case where $a$ and $b$ are relatively prime.
For if $d = \gcd(a,b) > 1$, then we could find an infinite binary word $\bf w$ avoiding $(a/d,b/d)$-pseudopowers that is $3^+$-free, and then apply $h_g$.
This was, in fact, the first piece of motivation for the study presented in this paper.

\section*{Acknowledgments}

We are very grateful to Gw\'ena\"el Richomme, who reminded us about Kobayashi's conditions
\cite[Thm. 5.3]{Kobayashi:1986}, heavily used in this study.

Many of the computations in this paper were done on the CrySP RIPPLE Facility at the University of Waterloo.  We express our deep thanks to Ian Goldberg for allowing us to run our computations on this machine.


\end{document}